\newcommand\version{January 15, 2026}
\newtheorem{theorem}{Theorem}[section]
\newtheorem{proposition}[theorem]{Proposition}
\newtheorem{lemma}[theorem]{Lemma}
\theoremstyle{definition}
\theoremstyle{remark}
\newtheorem{remark}[theorem]{Remark}
\numberwithin{equation}{section}
\newcommand{\C}{\mathbb{C}}
\renewcommand{\epsilon}{\varepsilon}
\newcommand{\N}{\mathbb{N}}
\renewcommand{\phi}{\varphi}
\newcommand{\R}{\mathbb{R}}
\newcommand{\cs}{\mathcal{S}}
\newcommand{\Z}{\mathbb{Z}}
\newcommand{\me}[1]{\mathrm{e}^{#1}}
\newcommand{\one}{\mathbf{1}}
\newcommand{\lesi}{\lesssim}
\newcommand{\f}{\frac}
\newcommand{\vc}{\infty}
\newcommand{\Rd}{\mathbb{R}^d}
\newcommand{\La}{\Lambda_\kappa}
\newcommand{\Ln}{\Lambda_0}
\begin{document}

\subjclass[2020]{Primary: 46E36; Secondary: 26D10}
\keywords{Fractional Laplacian, Kolmogorov operator, Hardy inequality, Hardy drift, Sobolev space} 

\title[Hardy operators and Sobolev norms --- \version]{Equivalence of Sobolev norms for \\ Kolmogorov operators with scaling-critical drift}

\author[T. A. Bui]{The Anh Bui}
\address[The Anh Bui]{School of Mathematical and Physical Sciences, Macquarie University, NSW 2109, Australia}
\email{the.bui@mq.edu.au}

\author[X. T. Duong]{Xuan Thinh Duong}
\address[Xuan Thinh Duong]{School of Mathematical and Physical Sciences, Macquarie University, NSW 2109, Australia}
\email{xuan.duong@mq.edu.au}

\author[K. Merz]{Konstantin Merz}
\address[Konstantin Merz]{Institute for Theoretical Physics, ETH Zurich, Wolfgang--Pauli-Strasse~27, 8093 Zurich, Switzerland}
\email{konstantin.merz@ethz.ch}

\begin{abstract}
  We consider the ordinary or fractional Laplacian plus a homogeneous, scaling-critical drift term. This operator is non-symmetric but homogeneous, and generates scales of $L^p$-Sobolev spaces which we compare with the ordinary homogeneous Sobolev spaces. Unlike in previous studies concerning Hardy operators, i.e., ordinary or fractional Laplacians plus scaling-critical scalar perturbations, handling the drift term requires an additional, possibly technical, restriction on the range of comparable Sobolev spaces, which is related to the unavailability of gradient bounds for the associated semigroup.
\end{abstract}

\maketitle
\vspace*{-2em}
\tableofcontents

\section{Introduction and main result}
\label{s:introduction}

\subsection{Introduction}

We consider Kolmogorov operators, i.e., ordinary or fractional Laplacians plus gradient perturbations, given by $(-\Delta)^{\alpha/2}+b(x)\cdot\nabla$, acting on functions in $\R^d$ with $d\in\N:=\{1,2,\ldots\}$. Here, $\alpha\in(0,2]$ and $b:\R^d\to\R^d$ is a vector field. Their study is motivated, among others, by probability theory, where they arise as generators of Brownian motion or $\alpha$-stable processes with drift. Kolmogorov operators are also important in physics, biology, finance, and further applied sciences, where the inclusion of gradient perturbations allows for modeling additional forces or influences acting on the system, thereby providing a more comprehensive description of the underlying processes. For example, in turbulent fluids, Kolmogorov operators describe anomalous diffusion, i.e., particles spreading at a rate different from what Brownian motion or $\alpha$-stable processes predict \cite{MetzlerKlafter2000}.
In biology, gradient perturbations can represent directed movement or taxis \cite{Klagesetal2008}, and in finance, they can represent drifts in asset price dynamics \cite{ContTankov2004}. For further examples and references, we refer, e.g., to the review \cite{Stinga2019}.

In this paper, we consider the following Kolmogorov operator, formally given by
\begin{align}
  \Lambda_\kappa := (-\Delta)^{\alpha/2} + \frac{\kappa}{|x|^\alpha} \, x \cdot \nabla,
\end{align}
for $d\in\N$ and $\alpha\in(1,2]$ with $\alpha<(d+2)/2$ and a coupling constant $\kappa\in\R$.
We call the gradient perturbation "attractive" for $\kappa>0$ and "repulsive" for $\kappa<0$, following terminology in \cite{KinzebulatovSemenov2023F,Kinzebulatovetal2024}, and motivated by considering the action of $x\cdot\nabla$ on positive functions decaying at infinity. The assumption $\alpha>1$ is necessary for the heat kernel of $\La$---which will be a key technical tool in our paper---to have a chance to be comparable with that of $\Ln$. We defer a more detailed explanation to Remark~\ref{rem:heatkernelalphaleq1}. Besides, $\alpha\geq1$ is used in technical integral estimates, especially in Section~\ref{s:newboundsdifferenceskernels}, below. The assumption $\alpha<(d+2)/2$ is crucial as it ensures that $\La$ generates a holomorphic semigroup and poses only an additional restriction when $d=1$ or $d=2$. We will review the precise definition of $\La$ in Subsection~\ref{ss:mainresult} below.

Importantly, $\La$ is homogeneous (of degree $-\alpha$). Therefore, one expects $\La$ and $\Ln$ to be comparable to each other in some sense. That this is indeed the case, is the main result of the present paper. In Theorem~\ref{eqsob} below, we compare the $L^p(\R^d)$-Sobolev spaces generated by powers of $\La$ with the ordinary, homogeneous Sobolev spaces.
Our result joins a line of recent research \cite{Killipetal2018,Franketal2021,Merz2021,BuiDAncona2023} on the so-called Hardy operator,
\begin{align}
  L_\kappa := (-\Delta)^{\alpha/2} + \frac{\kappa}{|x|^\alpha} \quad \text{in} \ L^2(\R^d),
\end{align}
i.e., the fractional Laplacian plus the scalar-valued, so-called, Hardy potential $\kappa/|x|^\alpha$. We also refer to \cite{FrankMerz2023,BuiMerz2023} for studies concerning the regional fractional Laplacian \cite{Bogdanetal2003} on the half-space $\R_+^d$ with Hardy potential $\kappa/x_d^\alpha$ depending on the distance to the bounding half-plane.
As $(-\Delta)^{\alpha/2}$ and $|x|^{-\alpha}$ share the same scaling behavior, these operators compete with each 
on every length scale, thereby leading to the emergence of a critical coupling constant. More precisely, by the sharp Hardy--Kato--Herbst inequality \cite{Hardy1919,Hardy1920,Kato1966,Herbst1977}\footnote{See also \cite{Kovalenkoetal1981,Yafaev1999,Franketal2008H,FrankSeiringer2008} for other proofs of \eqref{eq:hardy} with the optimal constant $\kappa_*$. Formula~\eqref{eq:hardy} is often simply called Hardy inequality.}
\begin{align}
  \label{eq:hardy}
  \begin{split}
    & \left\| (-\Delta)^{\alpha/4} f \right\|_{L^2(\R^d)}^2
    \geq \kappa_* \left\| |x|^{-\alpha/2} f \right\|_{L^2(\R^d)}^2 \\
    & \quad \text{for all}\ \alpha\in(0,2]\cap(0,d) \text{ and } f\in C_c^\infty(\R^d)
  \end{split}
\end{align}
with
\begin{align}
  \kappa_* = \kappa_*(d,\alpha) := 2^\alpha\, \frac{\Gamma\left(\frac{d+\alpha}{4}\right)^2}{\Gamma\left(\frac{d-\alpha}{4}\right)^2},
\end{align}
the quadratic form corresponding to $L_\kappa$ is bounded from below if and only if $\kappa\geq-\kappa_*$. Moreover, if $\kappa\geq-\kappa_*$, then this form is nonnegative and $L_\kappa$ can be realized as a self-adjoint operator by a theorem of Friedrichs. While $L_\kappa$ has no eigenvalues, there is a strictly monotonously increasing function 
\begin{align}
  \label{eq:defdeltakappa}
  (-\infty,\kappa_*]\ni\kappa\mapsto\delta_\kappa\in(-M,(d-\alpha)/2],
\end{align}
where 
\begin{align}
  \label{eq:defM}
  M :=
  \begin{cases}
    \alpha & \quad \text{if} \ \alpha<2, \\
    \infty & \quad \text{if} \ \alpha=2,
  \end{cases}
\end{align}
such that $L_\kappa|x|^{-\delta_\kappa}=0$ holds pointwise almost everywhere. As $|x|^{-\delta_\kappa}$ is, however, not square-integrable, this function is sometimes called generalized eigenfunction or generalized ground state, and $0$ is called the corresponding generalized eigenvalue of $L_\kappa$. 
We refer, e.g., to \cite[p.~2286]{Franketal2021} for an expression of $\delta_\kappa$ and merely record $\delta_{-\infty}=-M$, $\delta_0=0$, and $\delta_{\kappa_*}=(d-\alpha)/2$.

\subsection{Main result}
\label{ss:mainresult}
To state our main result, Theorem~\ref{eqsob}, we introduce notation related to the homogeneity of $\La$ and, in fact, two, corresponding critical coupling constants. 
In the following, we parameterize $\kappa$ via
$$
\kappa = \Psi(\beta)
$$
where
\begin{align}
  \label{eq:defbeta}
  \Psi(\beta):=
  \begin{cases}
    \frac{2^\alpha}{\beta-\alpha} \cdot \frac{\Gamma\left(\frac\beta2\right)\Gamma\left(\frac d2-\frac{\beta-\alpha}{2}\right)}{\Gamma\left(\frac d2-\frac\beta2\right)\Gamma\left(\frac{\beta-\alpha}{2}\right)} & \quad \text{for $\alpha\in(1,2)$ and} \ \beta\in(\alpha,d+\alpha), \\
    d-\beta & \quad \text{for $\alpha=2$ and} \ \beta\in\R.
  \end{cases}
\end{align}
Thus, as $\beta$ runs through the domain of $\Psi(\beta)$, so does the coupling constant $\kappa=\Psi(\beta)$; below, we will discuss the behavior of $\Psi(\beta)$ in detail.
The parameter $\beta$ enters in the generalized eigenvalue equation $\Lambda_\kappa^*|x|^{\beta-d}=0$ with $\Lambda_\kappa^*=\Lambda_0-\kappa\nabla\cdot (|x|^{-\alpha}x)$ being the formal adjoint of $\La$, sometimes called Fokker--Planck operator. As $\delta_\kappa$ plays an important role in the analysis of $L_\kappa$, we anticipate that the parameter $\beta$ will be important in the present study.
We note that the fraction of the four Gamma functions in~\eqref{eq:defbeta} becomes maximal for $\beta=(d+\alpha)/2$, i.e., the midpoint of the interval $(\alpha,d)$; cf.~\cite{Franketal2021}.
Thus, we denote the corresponding value of $\kappa$ by
\begin{align}
  \label{eq:kappacrit}
  \kappa_{\rm c} := \Psi\left(\frac{d+\alpha}{2}\right)
  = 
  \begin{cases} 
    \frac{2^{\alpha+1}}{d-\alpha} \cdot \frac{\Gamma\left(\frac{d+\alpha}{4}\right)^2}{\Gamma\left(\frac{d-\alpha}{4}\right)^2} & \quad \text{if} \ \alpha<2, \\
    \frac{d-2}{2} & \quad \text{if} \ \alpha=2,
  \end{cases}
\end{align}
which turns out to be one of the critical coupling constants as we will discuss shortly. However, $\Psi(\beta)$ is not symmetric around the midpoint $\beta=(d+\alpha)/2$, see, e.g., \cite[p.~1868]{Kinzebulatovetal2021} and \cite[p.~347]{KinzebulatovSemenov2023F} for plots.
Note further that 
$\lim_{\beta\nearrow d}\Psi(\beta)=\lim_{\beta\searrow d}\Psi(\beta)=0$, $\lim_{\beta\nearrow d+\alpha}\Psi(\beta)=-\infty$, and
\begin{align}
  \kappa_{\rm c} = \frac{2}{d-\alpha}\kappa_*(d,\alpha).
\end{align}

The following lemma---whose proof is deferred to Appendix~\ref{a:proofmonotonicity}---asserts that $\Psi(\beta)$ is decreasing for $\beta>(d+\alpha)/2$. Thus, for each $\kappa\leq\kappa_{\rm c}$ there is a unique $\beta$ such that $\kappa=\Psi(\beta)$.

\begin{lemma}
  \label{monotonicitykappabeta}
  Let $\alpha\in(1,2]$ and $M=\alpha$ if $\alpha<2$ and $M=\infty$ if $\alpha=2$.
  Then, the map $((d+\alpha)/2,d+M)\ni\beta\mapsto\Psi(\beta)$, given by~\eqref{eq:defbeta} is strictly monotonously decreasing.
\end{lemma}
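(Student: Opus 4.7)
For $\alpha=2$ the claim is immediate from $\Psi(\beta)=d-\beta$. For $\alpha\in(1,2)$, my plan is to split $((d+\alpha)/2,d+\alpha)$ at $\beta=d$, noting that $\Psi(d)=0$ (the factor $1/\Gamma((d-\beta)/2)$ vanishes) while $\Psi>0$ on $((d+\alpha)/2,d)$ and $\Psi<0$ on $(d,d+\alpha)$, and to prove strict monotonicity on each subinterval separately by logarithmic differentiation.

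On $((d+\alpha)/2,d)$, which is non-empty exactly when $d>\alpha$, all four Gamma arguments are strictly positive, so differentiating $\log\Psi$ directly gives
\[
  (\log\Psi)'(\beta) = -\frac{1}{\beta-\alpha} + \frac{1}{2}\left[F\!\left(\frac{\beta-\alpha}{2}\right) - F\!\left(\frac{d-\beta}{2}\right)\right], \quad F(x) := \psi(x+\tfrac{\alpha}{2})-\psi(x).
\]
The trigamma $\psi'(x)=\sum_{n\geq 0}(x+n)^{-2}$ is strictly decreasing on $(0,\infty)$, so $F$ is strictly decreasing there; since $\beta>(d+\alpha)/2$ is equivalent to $(\beta-\alpha)/2>(d-\beta)/2>0$, this gives $F((\beta-\alpha)/2)<F((d-\beta)/2)$. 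Combined with $-1/(\beta-\alpha)<0$, we conclude $(\log\Psi)'<0$.

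On $(d,d+\alpha)$ the argument $(d-\beta)/2$ lies in $(-1,0)$; the functional equation $\Gamma((d-\beta)/2)=\tfrac{2}{d-\beta}\Gamma(1+(d-\beta)/2)$ lets me extract the sign,
\[
  \Psi(\beta) = -(\beta-d)\,\tilde\Psi(\beta), \quad \tilde\Psi(\beta) := \frac{2^{\alpha-1}}{\beta-\alpha}\cdot\frac{\Gamma(\beta/2)\,\Gamma((d+\alpha-\beta)/2)}{\Gamma(1+(d-\beta)/2)\,\Gamma((\beta-\alpha)/2)}>0,
\]
so the desired $\Psi'<0$ translates into $(\log\tilde\Psi)'(\beta)>-1/(\beta-d)$. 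Setting $t=\beta-d\in(0,\alpha)$, a direct computation yields
\[
  (\log\tilde\Psi)'(d+t) = -\tfrac{1}{d+t-\alpha} + \tfrac{1}{2}\bigl[\psi(\tfrac{d+t}{2}) - \psi(\tfrac{d+t-\alpha}{2})\bigr] + \tfrac{1}{2}\bigl[\psi(1-\tfrac{t}{2}) - \psi(\tfrac{\alpha-t}{2})\bigr],
\]
with both bracketed digamma differences strictly positive (the second because $\alpha<2$). The desired inequality reduces to showing that these two positive gains together exceed the deficit $\tfrac{1}{d+t-\alpha}-\tfrac{1}{t}=\tfrac{\alpha-d}{t(d+t-\alpha)}$.

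The main obstacle arises when this deficit is positive, i.e., $d<\alpha$, which by the standing hypothesis $\alpha<(d+2)/2$ can only happen for $d=1$. Here I would invoke the convexity bound $\psi(y+h)-\psi(y)\geq h/[y(y+h)]$ (immediate from $\psi'(s)\geq s^{-2}$) applied to the first digamma difference; this yields a lower bound of $\alpha/[(d+t-\alpha)(d+t)]$, which majorizes $(\alpha-d)/[t(d+t-\alpha)]$ precisely when $\alpha t\geq(\alpha-d)(d+t)$, i.e., when $t\geq\alpha-d$, and the latter holds throughout the domain of $\Psi$ since $\beta>\alpha$.
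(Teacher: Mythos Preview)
Your argument is correct and takes a genuinely different route from the paper. The paper computes $\Psi'$ directly as a Gamma-ratio prefactor times a bracket $(\beta-\alpha)Q(\beta)-2$, and then argues, via the series $\psi(z)=-\gamma_{\rm E}+\sum_{k\ge0}\bigl((k+1)^{-1}-(k+z)^{-1}\bigr)$, that the digamma combination $Q(\beta)=\psi(\tfrac{\beta}{2})-\psi(\tfrac{\beta-\alpha}{2})+\psi(\tfrac{d-\beta}{2})-\psi(\tfrac{d+\alpha-\beta}{2})$ is negative on the whole range. You instead split at $\beta=d$, pass to the logarithmic derivative on each sub-interval, and use only the strict decrease of $F(x)=\psi(x+\alpha/2)-\psi(x)$ (via trigamma monotonicity) on the left piece together with the elementary bound $\psi'(s)>s^{-2}$ on the right. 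The key gain of your factorization $\Psi=-(\beta-d)\tilde\Psi$ with $\tilde\Psi>0$ is that it explicitly isolates the sign flip of $\Gamma((d-\beta)/2)$ at $\beta=d$: the paper's uniform treatment does not track that its prefactor becomes negative for $\beta>d$, and in fact the $k=0$ term of its series forces $Q(\beta)\to+\infty$ as $\beta\to d^+$, so the asserted inequality $Q<0$ fails on $(d,d+\alpha)$ and a separate argument is genuinely needed there --- your decomposition supplies exactly that. One small point to make explicit in a write-up: since $\Psi$ is continuous at $\beta=d$ with $\Psi(d)=0$, strict decrease on each open sub-interval together with $\Psi>0$ on the left and $\Psi<0$ on the right yields strict decrease across the split.
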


\smallskip
We now explain how the scaling criticality of the gradient perturbation $\kappa|x|^{-\alpha} x\cdot\nabla$, called Hardy drift in \cite{Kinzebulatovetal2021}, manifests itself in the emergence of two critical coupling constants. 
\\
(1) The first, and for us more important, critical coupling constant is $\kappa_{\rm c}$. For $\kappa<\kappa_{\rm c}$ the Hardy drift is a Rellich perturbation of $(-\Delta)^{\alpha/2}$, i.e., 
\begin{align}
  \label{eq:rellichperturbation}
  \|\kappa|x|^{-\alpha}x\cdot\nabla(\zeta+(-\Delta)^{\alpha/2})^{-1}\|_{L^2(\R^d)\to L^2(\R^d)} < 1.
\end{align}
This follows from Hardy's inequality \eqref{eq:hardy} in the form
\begin{align}
  \label{eq:hardyconsequence}
  \|\kappa|x|^{1-\alpha}(-\Delta)^{-(\alpha-1)/2}\|_{L^2(\R^d)\to L^2(\R^d)} < 1,
\end{align}
whenever $\kappa<\kappa_*(d,2(\alpha-1))=\kappa_{\rm c}$ and $\alpha<(d+2)/2$.  Consequently, $\La$ can be constructed as the algebraic sum $(-\Delta)^{\alpha/2}+\kappa|x|^{-\alpha}x\cdot\nabla$ in $L^2(\R^d)$ with domain being the Sobolev space $H^{\alpha}(\R^d)$, and
$\Lambda_\kappa$ generates a holomorphic semigroup, denoted by $\me{-t\La}$, in $L^2(\R^d)$ whenever $\kappa<\kappa_{\rm c}$; see also \cite[Proposition~8]{Kinzebulatovetal2021}, \cite[\S 8]{KinzebulatovSemenov2023F}, and \cite[Theorem~4.2]{KinzebulatovSemenov2020}\footnote{While \cite{KinzebulatovSemenov2020,Kinzebulatovetal2021,KinzebulatovSemenov2023F} state their results only for $d\in\{3,4,\ldots\}$ and $\alpha\in(1,2]$, an inspection of their proofs, taking \eqref{eq:hardyconsequence} into account, show that all their results actually hold for all $d\in\N$ and $\alpha\in(1,2]$ with $\alpha<(d+2)/2$.}. Since the holomorphic semigroup corresponding to $\La$ plays an important technical role in the following, we restrict our attention from now on to $\kappa<\kappa_{\rm c}$, or, equivalently, to
\begin{align}
  \beta\in\left(\frac{d+\alpha}{2},d+M\right)
  \quad \text{with} \ d\in\N \text{ and } \alpha\in(1,2]\cap(1,(d+2)/2)
\end{align}
in view of \eqref{eq:kappacrit} and Lemma~\ref{monotonicitykappabeta}. Moreover, Kinzebulatov, Semenov, and Szczypkowski \cite{KinzebulatovSemenov2020,Kinzebulatovetal2021,KinzebulatovSemenov2023F} proved that $\me{-t\La}$ is an $L^\infty$ contraction and extends by continuity to a $C_0$ semigroup on $L^r(\R^d)$ for all $r\in[2,\infty)$ whenever $\kappa<\kappa_{\rm c}$. Furthermore, for $\alpha<2$, they proved that the kernel $\me{-t\La}(x,y)$ is bounded from above and below by positive constants times the heat kernel of $\Ln$ times a weight, depending on only one of the spatial variables. The reason that this weight depends only on one of the variables is due to the non-symmetry of $\La$. See~\eqref{eq:heatkernel} below for these bounds.
Analogous bounds for $\alpha=2$ do not appear to be available yet. However, in view of the factorization of the bounds for $\me{-t\La}$ with $\alpha<2$, we expect them to be of the form \eqref{eq:heatkernelalpha2}.
\\
(2) The second critical coupling constant is $2\kappa_{\rm c}$.
While the Hardy drift is not a Rellich perturbation anymore for $\kappa\geq\kappa_{\rm c}$, the operators $\La$ and $\me{-t\La}$ can still be defined for all $\kappa\in[\kappa_{\rm c},2\kappa_{\rm c})$. More precisely, for $\alpha<2$, \cite{Kinzebulatovetal2021} define $\me{-t\La}$ as the limit of the heat kernels associated to $(-\Delta)^{\alpha/2}+\kappa(|x|^2+\epsilon)^{-\alpha/2}x\cdot\nabla+\alpha\kappa\epsilon(|x|^2+\epsilon)^{-\alpha/2-1}$, defined on the Sobolev space $(1-\Delta)^{-\alpha/2}L^p(\R^d)$ for all $p\in[1,\infty)$, as $\epsilon\searrow0$ in $L^r(\R^d)$ for all $r\in(r_{\kappa},\infty)$ with an explicit $r_{\kappa}\geq2$. Correspondingly, for $\kappa\in[\kappa_{\rm c},2\kappa_{\rm c})$, $\La$ is defined to be the generator of the so-constructed heat kernel $\me{-t\La}$. We refer to \cite[Section~3]{Kinzebulatovetal2021} and \cite[Section~4]{KinzebulatovSemenov2020} for the precise procedures for $\alpha<2$ and $\alpha=2$, respectively. For $\alpha<2$, the work \cite{Kinzebulatovetal2021}
shows the heat kernel estimates for $\kappa<\kappa_{\rm c}$ discussed above extend to all $\kappa<2\kappa_{\rm c}$. One of the reasons why $2\kappa_{\rm c}$ also deserves to be called a critical constant is that for $\alpha=2$ and $\kappa>2\kappa_{\rm c}$, appropriately defined weak solutions to the corresponding parabolic equation cease to be unique \cite[p.~1588]{KinzebulatovSemenov2020}. Moreover, for $d=3$ and $\kappa=2\kappa_{\rm c}$, the properties of the corresponding semigroup are drastically different from the properties of $\me{t\Delta}$ and $\me{-t\La}$ with $\kappa<2\kappa_{\rm c}$, see, e.g., \cite{FitzsimmonsLi2019}. 
Noteworthy, for $\alpha\in(1,2)$, an optimal analog of Hardy's inequality \eqref{eq:hardy} in $L^p$ in~\cite{Bogdanetal2022} allows to extend certain estimates used in \cite{Kinzebulatovetal2021} to construct $\me{-t\La}$ and $\La$ even for $\kappa>2\kappa_{\rm c}$; see \cite[Remark~6]{Kinzebulatovetal2021}.

\smallskip
We now state our main result using the notation
\begin{align}\label{eq-d beta}
  d_\beta := \frac{d}{(d-\beta)\vee0}
  \quad \text{and} \quad
  (d_\beta)' := 1 \vee \frac{d}{\beta}
\end{align}
for $\beta>0$ with $A\vee B:=\max\{A,B\}$ and the convention $1/0=\infty$. 

\begin{theorem}[Equivalence of Sobolev norms]
  \label{eqsob}
  Let $d\in\N$, $\alpha\in(1,2]$ with $\alpha<(d+2)/2$, $s\in(0,1]$, $\beta\in ((d+\alpha)/2,d+M)$, and $\kappa=\Psi(\beta)$ be defined by \eqref{eq:defbeta}.
  If $\alpha=2$, assume that the upper heat kernel bound \eqref{eq:heatkernelalpha2} for $\me{-t\La}$ holds.
  If
  \begin{align}
    (d_\beta)' < p < \frac{d}{\alpha s} \wedge d_\beta,
  \end{align}
  then the following statements hold.
  \begin{enumerate}
  \item Assume furthermore that $\alpha s<\alpha-1$. Then,
    \begin{align}
      \label{eq:equivalencesobolev1}
      \|(\Lambda_0)^{s} f\|_{L^p(\R^d)} 
      \lesssim_{d,\alpha,s,\beta,p} \|(\Lambda_\kappa)^{s} f\|_{L^p(\R^d)}
      \quad \text{for all} \ f\in C_c^\infty(\R^d).
    \end{align}
    
  \item We have 
    \begin{align}
      \label{eq:equivalencesobolev2}
      \|(\Lambda_\kappa)^{s} f\|_{L^p(\R^d)}
      \lesssim_{d,\alpha,s,\beta,p} \|(\Lambda_0)^{s} f\|_{L^p(\R^d)}
      \quad \text{for all} \ f\in C_c^\infty(\R^d).
    \end{align}
  \end{enumerate}
\end{theorem}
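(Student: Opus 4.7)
My plan is to prove each inequality by showing that the corresponding ``transfer operator'' is bounded on $L^p$: for \eqref{eq:equivalencesobolev2} I would show that $\Lambda_\kappa^s \Lambda_0^{-s}$ is bounded on $L^p(\R^d)$ and apply it to $g=\Lambda_0^s f$, and analogously for \eqref{eq:equivalencesobolev1} via $\Lambda_0^s \Lambda_\kappa^{-s}$ acting on $h=\Lambda_\kappa^s f$. The key analytical input is the Balakrishnan subordination identity
\begin{align*}
\Lambda_\kappa^s - \Lambda_0^s \;=\; \frac{s}{\Gamma(1-s)}\int_0^\infty t^{-s-1}\bigl(\me{-t\Ln}-\me{-t\La}\bigr)\,dt,\qquad s\in(0,1),
\end{align*}
which reduces both bounds to $L^p$-type control of the semigroup kernel difference $\me{-t\La}(x,y)-\me{-t\Ln}(x,y)$, using the pointwise bounds derived in Section~\ref{s:newboundsdifferenceskernels} together with the full heat-kernel comparisons \eqref{eq:heatkernel}/\eqref{eq:heatkernelalpha2}.

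For part (2), I would write $\Lambda_\kappa^s \Lambda_0^{-s} = I + R_2$ with
\begin{align*}
R_2 \;=\; \frac{s}{\Gamma(1-s)}\int_0^\infty t^{-s-1}\,(\me{-t\Ln}-\me{-t\La})\,\Lambda_0^{-s}\,dt,
\end{align*}
and use that $\Lambda_0^{-s}$ is the Riesz potential with kernel proportional to $|z-y|^{\alpha s-d}$. The kernel of $R_2$ is then the $t$-integral of a spatial convolution of the heat-kernel difference against $|\cdot|^{\alpha s - d}$; inserting the factorised bounds from Section~\ref{s:newboundsdifferenceskernels}, performing the $t$-integration, and splitting the $z$-integration according to whether $|z|$ is comparable to $|x|$, to $|y|$, or to neither, I expect to obtain a pointwise estimate of weighted Hardy--Littlewood--Sobolev form, essentially $|x|^{-a}\,|x-y|^{a+b+\alpha s-d}\,|y|^{-b}$ with nonnegative $a,b$ determined by $\beta,d,\alpha$. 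The $L^p$ boundedness of such weighted fractional integrals is classical (Stein--Weiss), and the arithmetic of the exponents $a,b,\alpha s,d$ is what pins down the range $(d_\beta)' < p < d_\beta\wedge d/(\alpha s)$: the upper condition $p<d/(\alpha s)$ is standard Sobolev scaling, while the two conditions involving $d_\beta$ and $(d_\beta)'$ encode the dual integrability of the generalized ground state $|x|^{\beta-d}$ carried by the weights.

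Part (1) would follow the same blueprint but pass through $\Lambda_\kappa^{-s}$, whose integral kernel I would recover from $\Lambda_\kappa^{-s}(x,y)=\Gamma(s)^{-1}\int_0^\infty t^{s-1}\me{-t\La}(x,y)\,dt$ and estimate via the heat-kernel bounds as $|x-y|^{\alpha s - d}$ modulated by a $\La$-specific weight recording the generalized ground state. The hard part, and the source of the additional restriction $\alpha s<\alpha-1$, is ensuring small-$t$ integrability of the Balakrishnan integrand $t^{-s-1}(\me{-t\Ln}-\me{-t\La})$ after composition with $\Lambda_\kappa^{-s}$. Because no gradient bound of the form $\lVert\na\me{-t\La}\rVert\lesssim t^{-1/\alpha}$ is presently available for the perturbed semigroup, the best kernel-difference estimate one can extract from a Duhamel iteration---using only the gradient bound on the free semigroup $\me{-t\Ln}$ and integrating $\int_0^t (t-u)^{-1/\alpha}\,du$---improves the integrand by at most a factor $t^{(\alpha-1)/\alpha}$, reflecting the first-order nature of the Hardy drift. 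The $t$-integral near $0$ then converges precisely when $s<(\alpha-1)/\alpha$, i.e.\ $\alpha s<\alpha-1$. Once this gain is secured, the $L^p$ boundedness of $R_1:=\Lambda_0^s\Lambda_\kappa^{-s}-I$ should follow from the same weighted Stein--Weiss / Schur estimate as in part (2), in the same range of~$p$; this lack of a gradient bound for $\me{-t\La}$ is the essential obstruction that I expect to work hardest to navigate.
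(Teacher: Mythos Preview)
Your approach via the Balakrishnan formula and direct kernel estimates is a genuine alternative to the paper's route through square functions (Theorem~\ref{squarefunctions}), and is in the spirit of \cite{Franketal2021,Merz2021} for scalar Hardy operators. The paper instead represents $\|\Lambda_\kappa^s f\|_p$ by the square function $\|(\int_0^\infty t^{-2s}|t\Lambda_\kappa e^{-t\Lambda_\kappa}f|^2\,dt/t)^{1/2}\|_p$, applies the triangle inequality, bounds the difference of square functions by $\||x|^{-\alpha s}f\|_p$ (and, for large $\alpha s$, by $\||x|^{1-\gamma-\alpha s}|\Lambda_0^{(1-\gamma)/\alpha}f|\|_p$) via the reversed Hardy inequality (Theorem~\ref{thm-difference}), and closes with the generalized Hardy inequality (Theorem~\ref{thm-HardyIneq}).

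There is, however, a genuine gap in your plan for part~(2). You propose to bound the kernel of $R_2=(\Lambda_\kappa^s-\Lambda_0^s)\Lambda_0^{-s}$ pointwise by first inserting the heat-kernel-difference bounds from Section~\ref{s:newboundsdifferenceskernels} and only afterwards convolving with the Riesz kernel $|\cdot|^{\alpha s-d}$. But those pointwise bounds on $\tilde p_t-p_t$ (obtained exactly as in Proposition~\ref{prop-difference} via Duhamel with the gradient on $\tilde p_u$) contain the $M_t^{\gamma,1}$ contribution, and after the $t$-integration this yields, in the near-diagonal region $|x-z|\le|z|/2$, a piece of size $|z|^{1-\alpha}|x-z|^{\alpha-1-d-\alpha s}$. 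The subsequent convolution with $|z-y|^{\alpha s-d}$ is of Hardy--Littlewood--Sobolev type only when $d+\alpha s-\alpha+1<d$, i.e.\ precisely when $\alpha s<\alpha-1$. Thus your sequential ``bound then convolve'' scheme produces the \emph{same} restriction you correctly identified for part~(1), and cannot recover the full range $s\in(0,1)$ for~\eqref{eq:equivalencesobolev2}.

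The asymmetry between the two parts in the theorem comes from an idea your proposal does not include: one must push a fractional power of $\Lambda_0$ \emph{inside} the Duhamel integral before taking absolute values, writing $\nabla_w\tilde p_u(w,\cdot)=\nabla_w\Lambda_0^{-(1-\gamma)/\alpha}e^{-u\Lambda_0}(w,\cdot)\,\Lambda_0^{(1-\gamma)/\alpha}$ and using Lemma~\ref{lem-gradient of heat kernel 2}. This replaces $H_u^{\gamma,1}$ by $H_u^{\gamma,\gamma}$ in the Duhamel estimate, so that Lemma~\ref{lem- difference for alpha < 2} yields $|y|^{1-\gamma}M_t^{\gamma,\gamma}$ instead of $M_t^{\gamma,1}$; the ensuing Schur test then only needs $\alpha s<\alpha-\gamma$, which holds for all $s<1$ once one picks $\gamma$ with $1-\gamma\le\alpha s<\alpha-\gamma$. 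The price is that the resulting bound is not a pointwise kernel estimate on $R_2$ but an estimate of $|Q_tf(x)|$ involving $|\Lambda_0^{(1-\gamma)/\alpha}f|$ on the right (Proposition~\ref{prop-difference v2}), which must then be controlled by $\|\Lambda_0^s f\|_p$ via the generalized Hardy inequality~\eqref{eq:newgenhardynew2}. This regularization works only in the direction of part~(2) because $\Lambda_0$ commutes with its own semigroup (the one carrying the gradient in Duhamel); the analogous step for part~(1) would require gradient bounds on $e^{-t\Lambda_\kappa}$, which are unavailable --- this, rather than small-$t$ integrability of the Balakrishnan integral per se, is the structural reason for the asymmetry. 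Finally, note that the case $s=1$ in part~(2) is handled separately by the paper via the triangle inequality and the classical Hardy inequality, since neither the Balakrishnan formula nor the square-function representation applies at $s=1$.
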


Thus, Theorem~\ref{eqsob} says that the Sobolev spaces generated by powers of $\La$ and $\Ln$ are comparable with each other when the coupling constant is not too large in a quantitative sense; in particular, the singularity of the drift perturbation (which is also reflected in the important bounds for the heat kernel of $\Lambda_\kappa$) limits the range of admissible Sobolev exponents $s$. More precisely, when $\beta<d$ (corresponding to $\kappa>0$, i.e., an attractive gradient perturbation), then $d/(\alpha s)\wedge d_\beta=d/(\alpha s)$ and the range of admissible powers $s\in(0,\beta/\alpha)$ becomes smaller as $\beta$ runs from $d$ to $(d+\alpha)/2$. However, since $s\leq1$, the condition $s<\beta/\alpha$ only poses an additional restriction when $d=1$ since to have $(d+\alpha)/(2\alpha)<1$ (which is the value of $\beta/\alpha$ when $\beta=(d+\alpha)/2$), one needs $\alpha>d$, which is only possible if $d=1$.

Here and in the following, we write $A\wedge B:=\min\{A,B\}$. Moreover, for $A,B\geq0$ and a parameter $\tau$, we write $A\lesssim_\tau B$ whenever there is a $\tau$-dependent constant $c_\tau>0$ such that $A\leq c_\tau B$. The notation $A\sim_\tau B$ means $B\lesssim_\tau A\lesssim_\tau B$. The dependence on fixed parameters like $d,\alpha,s,\beta,p$ is usually omitted. Generic (real positive) constants will often be denoted by $c$ or $C$. For brevity, we sometimes write $\|f\|_p$ instead of $\|f\|_{L^p(\R^d)}$.

\smallskip
The rest of this introduction is structured as follows. First, we state auxiliary tools, so-called reversed and generalized Hardy inequalities, and use them to prove Theorem~\ref{eqsob}. These inequalities are also of independent interest. Afterwards, we compare Theorem~\ref{eqsob} to earlier, related results and outline implications. Finally, we outline the rest of the paper.

\subsection{Proof of Theorem~\ref{eqsob}}
\label{ss:proofideas}

The ideas to prove Theorem~\ref{eqsob} are similar to those in \cite{Killipetal2018,Franketal2021,Merz2021,BuiDAncona2023,FrankMerz2023,BuiMerz2023}, where heat kernel bounds were paramount. For $\alpha<2$, the heat kernel of $\La$ decays polynomially, while for $\alpha=2$ it obeys Gaussian bounds (compare \eqref{eq:heatkernel} and \eqref{eq:heatkernelalpha2} below). Thus, since the proofs of our results are significantly simpler for $\alpha=2$, we will prove Theorem~\ref{eqsob} and the following auxiliary statements involved in its proof only for $\alpha\in(1,2)$, and omit the extension of the proofs to the case $\alpha=2$.

Compared to the above-mentioned studies, our proof of Theorem~\ref{eqsob} requires more technical effort. First, as opposed to \cite{Franketal2021,FrankMerz2023}, due to the non-symmetry of $\Lambda_\kappa$, establishing the equivalence of $L^2$-Sobolev norms via the spectral theorem is impossible. Therefore, we will resort to the continuous square function estimates established in \cite{BuiDAncona2023} and again applied in \cite{BuiMerz2023}. Due to the non-symmetry of $\Lambda_\kappa$, we need to investigate the boundedness of the following two square functions: The first is associated with $\Lambda_\kappa$ and the latter is associated with its adjoint $\Lambda^*_\kappa$. For $\gamma>0$, they are defined as
\begin{align}
  S_{\Lambda_\kappa,\gamma}f(x)
  = \left(\int_0^\vc|(t\Lambda_\kappa)^{\gamma}\me{-t\Lambda_\kappa}f|^2\f{dt}{t}\right)^{1/2}
\end{align}
and
\begin{align}
  S_{\Lambda^*_\kappa,\gamma}f(x)
  = \left(\int_0^\vc|(t\Lambda^*_\kappa)^{\gamma}\me{-t\Lambda^*_\kappa}f|^2\f{dt}{t}\right)^{1/2}.
\end{align}
The operator $\La$ generates a holomorphic semigroup and is maximal accretive, i.e., ${\rm Re}\langle f,\Lambda_\kappa f\rangle\geq0$ for all $f\in{\rm dom}(\Lambda_\kappa)$ \cite{Kinzebulatovetal2021}, hence it has a bounded functional calculus on $L^2$ which implies (see \cite{McIntosh1986}) that the square functions  $S_{\La,\gamma}$ and $S_{\La^*,\gamma}$ are bounded on $L^2(\R^d)$. In the following theorem, which we prove in Section~\ref{s:squarefunctions}, we show that $S_{\La,\gamma}$ and $S_{\La^*,\gamma}$ are also bounded on $L^p(\R^d)$ with $p\neq2$ and, in particular, obtain a square function representation of $\|\Lambda_\kappa^{s} f\|_{p}$.

\begin{theorem}
  \label{squarefunctions}
  Let $\gamma\in(0,1]$, $\alpha\in (1,2]$ with $\alpha<(d+2)/2$, $\beta\in ((d+\alpha)/2,d+M)$, and $\kappa=\Psi(\beta)$ be defined by~\eqref{eq:defbeta}.
  If $\alpha=2$, assume that the upper heat kernel bound \eqref{eq:heatkernelalpha2} holds.
  Then,  
  \begin{equation}
    \label{eq-square function}
    \|S_{\Lambda_\kappa,\gamma}f\|_{L^p(\R^d)}\lesi \|f\|_{L^p(\R^d)} \quad \text{for all} \ (d_\beta)'<p<\vc
  \end{equation}
  and
  \begin{equation}
    \label{eq-square function duality}
    \|S_{\Lambda^*_\kappa,\gamma}f\|_{L^p(\R^d)}\lesi\|f\|_{L^p(\R^d)} \quad \text{for all} \ 1<p<d_\beta.
  \end{equation}
  Consequently, for all $s\in (0,1)$ and $(d_\beta)'<p<d_\beta$,
  \begin{align}
    \label{eq:squarefunctions2}
    \left\|\Big(\int_0^\vc t^{-2s}|t\Lambda_\kappa  \me{-t\Lambda_\kappa}f|^2\f{dt}{t}\Big)^{1/2}\right\|_{L^p(\R^d)}\sim \|\Lambda_\kappa^{s} f\|_{L^p(\R^d)}.
  \end{align}
\end{theorem}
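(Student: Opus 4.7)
The plan is to extract both square function estimates from the pointwise heat kernel bounds~\eqref{eq:heatkernel} of Kinzebulatov--Semenov--Szczypkowski, upgrading the automatic $L^2$-boundedness of $S_{\Lambda_\kappa,\gamma}$ and $S_{\Lambda^*_\kappa,\gamma}$ that follows from the bounded $H^\infty$-calculus of a holomorphic semigroup generator (McIntosh's theorem). Following the framework of~\cite{BuiDAncona2023,BuiMerz2023}, Cauchy's formula in the holomorphic sector upgrades~\eqref{eq:heatkernel} to a pointwise bound on the kernel of $(t\Lambda_\kappa)^\gamma\me{-t\Lambda_\kappa}$ of the same shape: an $\alpha$-stable-type convolution profile in $(x-y)/t^{1/\alpha}$, multiplied by a one-sided weight behaving like $(1+t^{1/\alpha}/|x|)^{d-\beta}$ that depends only on~$x$. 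Non-symmetry of $\Lambda_\kappa$ forces the adjoint semigroup to satisfy the mirror bound with the weight sitting on~$y$ instead, and this one-sided asymmetry is precisely what produces the two distinct $p$-ranges in~\eqref{eq-square function} and~\eqref{eq-square function duality}.

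For~\eqref{eq-square function}, I would decompose $f$ dyadically in $|y|$, use the off-diagonal decay of the stable profile together with the weight on~$x$ to dominate $S_{\Lambda_\kappa,\gamma}f$ pointwise by a weighted Hardy--Littlewood maximal operator, and then sum the dyadic pieces via the Fefferman--Stein vector-valued inequality. The potential singularity of the weight at the origin forces $p>(d_\beta)'=1\vee d/\beta$, while the stable-type tails of the unperturbed kernel allow the bound to persist for all $p<\infty$. The dual estimate~\eqref{eq-square function duality} follows by the same scheme applied to $\Lambda^*_\kappa$: now the weight sits on the integration variable~$y$, so its integrability restricts the range from above, yielding $1<p<d_\beta$.

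For the equivalence~\eqref{eq:squarefunctions2}, the upper bound is immediate from~\eqref{eq-square function}: choosing $\gamma=1-s\in(0,1)$, the identity $t^{-s}(t\Lambda_\kappa)\me{-t\Lambda_\kappa}f=(t\Lambda_\kappa)^{1-s}\me{-t\Lambda_\kappa}(\Lambda_\kappa^{s}f)$ identifies the left-hand side of~\eqref{eq:squarefunctions2} with $\|S_{\Lambda_\kappa,1-s}(\Lambda_\kappa^{s}f)\|_{L^p(\R^d)}$. For the reverse inequality I would use a Calder\'on reproducing formula expressing $\Lambda_\kappa^{s}$ as an integral against a product of two $\Lambda_\kappa$-heat factors, pair against $g\in L^{p'}(\R^d)$, move one factor to the adjoint side of the inner product, and apply Cauchy--Schwarz in~$t$ together with H\"older in~$x$ to reduce matters to the bilinear estimate $\|S_{\Lambda_\kappa,\gamma}f\|_{L^p}\cdot\|S_{\Lambda^*_\kappa,\gamma'}g\|_{L^{p'}}$ for suitable $\gamma,\gamma'>0$. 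Invoking~\eqref{eq-square function duality} on $L^{p'}$ requires $p'<d_\beta$, i.e., $p>(d_\beta)'$, while the upper threshold $p<d_\beta$ is what ensures that $\Lambda_\kappa^{s}f$ is identified with the reproducing integral as an element of $L^p$ via~\eqref{eq-square function duality}.

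The main obstacle is the transition from~\eqref{eq:heatkernel} to the $L^p$-square function bound with the sharp range of exponents. The one-sided weight is neither bounded nor doubling, so the standard Gaussian heat kernel machinery cannot be applied verbatim; one must carefully verify that the singularity $|x|^{\beta-d}$---reflecting the generalized eigenfunction $\Lambda^*_\kappa|x|^{\beta-d}=0$ that also governed the symmetric Hardy operator analysis of~\cite{Franketal2021}---is compensated by integration against the $\alpha$-stable convolution kernel in precisely the stated $p$-range. Keeping track of the different roles played by the weight in the $\Lambda_\kappa$- and $\Lambda^*_\kappa$-arguments is what distinguishes the present non-symmetric setting from its symmetric Hardy-operator counterpart.
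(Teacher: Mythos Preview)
Your reduction of~\eqref{eq:squarefunctions2} to~\eqref{eq-square function}--\eqref{eq-square function duality} via the substitution $t^{-s}(t\Lambda_\kappa)\me{-t\Lambda_\kappa}f=(t\Lambda_\kappa)^{1-s}\me{-t\Lambda_\kappa}(\Lambda_\kappa^{s}f)$ and the duality/reproducing-formula argument is essentially what the paper does, and is correct.

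The gap is in your proof of~\eqref{eq-square function}--\eqref{eq-square function duality}. You propose to ``dominate $S_{\Lambda_\kappa,\gamma}f$ pointwise by a weighted Hardy--Littlewood maximal operator''. This cannot work: square functions are not pointwise dominated by maximal functions. Pointwise kernel bounds on $(t\Lambda_\kappa)^\gamma\me{-t\Lambda_\kappa}$ of the form you describe give only $|(t\Lambda_\kappa)^\gamma\me{-t\Lambda_\kappa}f(x)|\lesssim \mathcal{M}_r f(x)$ uniformly in $t$, and integrating this against $dt/t$ diverges. The convergence of the $t$-integral relies on cancellation (the kernel of $(t\Lambda_\kappa)^\gamma\me{-t\Lambda_\kappa}$ has mean zero), which is invisible to pure size estimates. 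The Fefferman--Stein vector-valued inequality does not rescue this: it applies to $\ell^2$-sums of maximal functions of \emph{different} inputs, not to a single $\mathcal{M}f$ integrated over a continuous parameter. What is needed is an extrapolation mechanism that leverages the known $L^2$-boundedness; the paper uses Auscher's criteria (Theorems~\ref{thm1-Auscher} and~\ref{thm2-Auscher}), which require verifying off-diagonal estimates on annuli for $S_{\Lambda_\kappa^*,\gamma}(I-\me{-r_B^\alpha\Lambda_\kappa^*})^m$ and $S_{\Lambda_\kappa^*,\gamma}\mathcal{A}_{r_B}$ with a high-order approximate identity $\mathcal{A}_{r_B}=I-(I-\me{-r_B^\alpha\Lambda_\kappa^*})^m$. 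These estimates in turn rest on the pointwise bounds for $\Lambda_\kappa^k\me{-t\Lambda_\kappa}$ with integer $k$ (Proposition~\ref{thm-ptk}) and the averaged $L^p\to L^q$ bounds of Theorems~\ref{thm-Tt}--\ref{thm-St}, and this is where the constraints $p>(d_\beta)'$ and $p<d_\beta$ actually enter.

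A secondary point: you have the weight on the wrong variable. From~\eqref{eq:heatkernel} the singular factor $(1\wedge|y|/t^{1/\alpha})^{\beta-d}$ in the kernel of $\me{-t\Lambda_\kappa}$ sits on the \emph{integration} variable $y$, not on $x$; for the adjoint $\me{-t\Lambda_\kappa^*}$ it sits on $x$. This is why the lower threshold $p>(d_\beta)'$ appears for $S_{\Lambda_\kappa,\gamma}$ (integrability of $|y|^{(\beta-d)p'}$ near the origin forces $p'<d_\beta$) while the upper threshold $p<d_\beta$ appears for $S_{\Lambda_\kappa^*,\gamma}$. Your $p$-ranges are correct, but the mechanism you describe for producing them is reversed.
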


Thus, to prove Theorem~\ref{eqsob}, it suffices to compare the square functions associated to $\Lambda_\kappa^s$ and $(-\Delta)^{\alpha s/2}$ with each other. To that end, we prove the following two statements, called reversed and generalized Hardy inequalities. We first state the reversed Hardy inequalities, expressed in terms of the square functions $S_{\Lambda_\kappa,\gamma}$. These inequalities give an upper bound for the difference of the square functions associated to $\La$ and $\Ln$ in terms of the scalar Hardy potential. While one would expect upper bounds in terms of the gradient perturbation, the bounds below are sufficient for our purposes. In particular, the advantage to incorporate the Hardy potential is that it is sign-definite and a mere multiplication operator. In turn, these properties enable us to straightforwardly prove that powers of the Hardy potential are bounded by powers of $\La$, leading to the generalized Hardy inequality in Theorem~\ref{thm-HardyIneq} below.

\begin{theorem}[Reversed Hardy inequality]
  \label{thm-difference}
  Let $\alpha\in(1,2]$ with $\alpha<(d+2)/2$, $\beta\in ((d+\alpha)/2,d+M)$, $s\in(0,1)$, and $\kappa=\Psi(\beta)$ be defined by \eqref{eq:defbeta}.
  If $\alpha=2$, assume that the upper heat kernel bound \eqref{eq:heatkernelalpha2} holds.
  Then the following statements hold for all $p\in((d_\beta)',\vc)$.
  \begin{enumerate}
  \item If $\alpha s<\alpha-1$, then
    \begin{align}
      \label{eq:thm-difference}
      \left\|\left(\int_0^\vc t^{-2s}\left|\left(t\La \me{-t\La} -t\Lambda_0\me{-t\Lambda_0}\right)f\right|^2\f{dt}{t}\right)^{1/2}\right\|_{L^p(\R^d)}
      \lesi \left\|\f{f}{|x|^{\alpha s}}\right\|_{L^p(\R^d)}.
    \end{align}
    
  \item For any $\gamma\in(0,1)$ such that $1-\gamma\leq \alpha s<\alpha-\gamma$, we have
    \begin{align}
      \label{eq:thm-difference2}
      \begin{split}
        & \left\|\left(\int_0^\vc t^{-2s}\left|\left(t\La \me{-t\La} -t\Lambda_0\me{-t\Lambda_0}\right)f\right|^2\f{dt}{t}\right)^{1/2}\right\|_{L^p(\R^d)} 
         \lesssim \left\|\frac{|\Ln^{\frac{1-\gamma}{\alpha}}f(x)|}{|x|^{\alpha s+\gamma-1}}\right\|_{L^p(\R^d)} + \left\|\f{f}{|x|^{\alpha s}}\right\|_{L^p(\R^d)}.
      \end{split}
    \end{align}
  \end{enumerate}
\end{theorem}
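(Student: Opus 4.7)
The plan is to start from Duhamel's formula
\begin{equation*}
\me{-t\La}f - \me{-t\Ln}f = -\kappa\int_0^t \me{-(t-\tau)\La}\bigl[|y|^{-\alpha}y\cdot\nabla_y\bigr]\me{-\tau\Ln}f\,d\tau,
\end{equation*}
then to differentiate in $t$, using the identity $t(\La\me{-t\La}-\Ln\me{-t\Ln})f = -t\partial_t[\me{-t\La}f-\me{-t\Ln}f]$, after a symmetric splitting $\int_0^t = \int_0^{t/2}+\int_{t/2}^t$ chosen so that the resulting boundary term is harmless. Into the resulting expression we insert the heat kernel bounds \eqref{eq:heatkernel}, which factor $\me{-(t-\tau)\La}(x,y)$ as $\me{-(t-\tau)\Ln}(x,y)$ times a single-variable weight. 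A key structural point is that no gradient bound for $\me{-t\La}$ is available, so every $\nabla$ must act on $\me{-\tau\Ln}f$, where the standard pointwise bound $|\nabla\me{-\tau\Ln}f(y)|\lesssim \tau^{-1/\alpha}\me{-c\tau\Ln}|f|(y)$ applies, yielding
\begin{equation*}
\bigl|(\La-\Ln)\me{-\tau\Ln}f(y)\bigr| \lesssim \tau^{-1/\alpha}|y|^{1-\alpha}\me{-c\tau\Ln}|f|(y).
\end{equation*}

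For part (1), the condition $\alpha s < \alpha-1$ is precisely what makes the resulting $\tau$-integrand $\tau^{-1/\alpha}$, once paired with the square function weight $t^{-2s}\,dt/t$, integrable near $0$. After carrying out the $\tau$-integration and using the Littlewood--Paley-type identity $\int_0^\infty t^{2\mu-1}|\me{-ct\Ln}g|^2\,dt\sim |\Ln^{-\mu}g|^2$ with $\mu = (\alpha-1)/\alpha - s > 0$, the square function is pointwise bounded by $|x|^{1-\alpha}|\Ln^{-\mu}|f|(x)|$, i.e.\ by a weighted Riesz potential of $|f|$. The classical reversed Hardy inequality for Riesz potentials $\||x|^{1-\alpha}\Ln^{-\mu}g\|_p \lesssim \||x|^{-\alpha s}g\|_p$ collapses this to $\||x|^{-\alpha s}f\|_p$. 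The restriction $p>(d_\beta)'$ is exactly the admissible range in which the single-variable weight from \eqref{eq:heatkernel} can be absorbed, via Minkowski and a Fefferman--Stein vector-valued maximal inequality, in the transport through $\me{-(t-\tau)\La}(x,y)$.

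For part (2), when $\alpha s \geq \alpha-1$ the above $\tau$-integral diverges near $0$. The remedy is to factor out a power of $\Ln$: writing $\me{-\tau\Ln}f = \me{-\tau\Ln}\Ln^{-(1-\gamma)/\alpha}\cdot\Ln^{(1-\gamma)/\alpha}f$ and absorbing $\me{-\tau\Ln}\Ln^{-(1-\gamma)/\alpha}$ into the semigroup produces an additional factor of order $\tau^{(1-\gamma)/\alpha}$ in the integrand. This rebalances the $\tau$-integration so that it converges precisely when $\gamma<\alpha-\alpha s$, matching the hypothesis; meanwhile $\gamma \geq 1-\alpha s$ keeps the exponent $\alpha s+\gamma-1$ of the resulting potential weight nonnegative. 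The same reversed-Hardy step then produces the weighted term $\||x|^{-(\alpha s+\gamma-1)}\Ln^{(1-\gamma)/\alpha}f\|_p$. The boundary contribution from $\tau\in[t/2,t]$, on which $\me{-\tau\Ln}f$ is estimated directly rather than through its gradient, contributes the residual $\|f/|x|^{\alpha s}\|_p$.

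The hard part will be arranging every occurrence of $\nabla$ so that it always lands on $\me{-\tau\Ln}f$ and never on $\me{-(t-\tau)\La}$, and tracking the non-symmetric weight in \eqref{eq:heatkernel} through the transport through $\me{-(t-\tau)\La}(x,y)$ while keeping the Minkowski/Fefferman--Stein machinery inside its admissible range---this is ultimately what pins down the one-sided bound $p>(d_\beta)'$ instead of the symmetric two-sided range. A secondary technical point is verifying that the boundary term generated by differentiating $\int_0^t$ at the cutoff $\tau=t/2$ is of acceptable size, and that the interplay of the two weights produced by the splitting does not disturb the sharp condition $\gamma \geq 1-\alpha s$ in part (2).
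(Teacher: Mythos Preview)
Your starting ingredients match the paper's: Duhamel's formula, differentiation in $t$ with the symmetric splitting $\int_0^{t/2}+\int_{t/2}^t$, keeping every $\nabla$ on $\me{-\tau\Ln}$ (since no gradient bound for $\me{-t\La}$ is known), and for part~(2) the factorisation $\me{-\tau\Ln}=\Ln^{-(1-\gamma)/\alpha}\me{-\tau\Ln}\cdot\Ln^{(1-\gamma)/\alpha}$. These are exactly the moves the paper makes in Propositions~\ref{prop-difference} and~\ref{prop-difference v2}.

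The execution, however, has a genuine gap. Your central claim for part~(1) is a \emph{pointwise} bound of the square function by $|x|^{1-\alpha}\,|\Ln^{-\mu}|f|(x)|$, followed by the ``Littlewood--Paley-type identity'' $\int_0^\infty t^{2\mu-1}|\me{-ct\Ln}g|^2\,dt\sim |\Ln^{-\mu}g|^2$. Neither step holds as stated. The drift factor $|z|^{1-\alpha}$ sits at the \emph{intermediate} variable in the Duhamel integral $\int\me{-(t-\tau)\La}(x,z)\,|z|^{1-\alpha}\cdots\,dz$, and there is no mechanism to extract it as $|x|^{1-\alpha}$; indeed when $|z|\ll|x|$ the discrepancy is arbitrarily large and is precisely where the singular weight $(1\wedge|z|/(t-\tau)^{1/\alpha})^{\beta-d}$ in \eqref{eq:heatkernel} intervenes. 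The quoted $t$-integral identity is also not a pointwise relation---it is a square-function \emph{norm} equivalence, and only after already specifying an $L^p$, so it cannot be chained with a subsequent weighted Riesz bound in the pointwise manner you describe. The Fefferman--Stein step you invoke to ``absorb the single-variable weight'' is too vague to cover these regions.

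What the paper does instead is give up on a clean operator factorisation and prove explicit pointwise kernel bounds. After Duhamel and the $t$-derivative, one shows (Proposition~\ref{prop-difference}) that the kernel $Q_t(x,y)$ of $t\La\me{-t\La}-t\Ln\me{-t\Ln}$ satisfies $|Q_t(x,y)|\lesssim L^{\gamma,1}_t(x,y)+M^{\gamma,1}_t(x,y)$, where $M^{\gamma,1}_t$ lives on the near-diagonal region $\{|y|\ge t^{1/\alpha},\ |x-y|\le(|x|\wedge|y|)/2\}$ and carries the factor $(|x|\vee|y|)^{1-\alpha}$ coming from the drift, while $L^{\gamma,1}_t$ collects the remaining regions and encodes the singular weight $(|y|/t^{1/\alpha})^{\beta-d}$. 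These bounds are obtained by a region-by-region analysis of the convolution $\int_0^t\int T^{\gamma}_{t-s}(x,z)|z|^{1-\alpha}H^{\gamma,1}_s(z,y)\,dz\,ds$ (Lemmas~\ref{lem- difference for alpha < 2}--\ref{lem- TH x equiv z}). The square function is then controlled by dyadic summation in $t$ plus Schur tests with weights $(|x|/|y|)^{\delta}$: the Schur test for $M^{\gamma,1}_t$ is where $\alpha s<\alpha-1$ is used (see \eqref{eq:schurtest2m0}--\eqref{eq:schurtest2m}), and the test for the piece of $L^{\gamma,1}_t$ carrying $|y|^{\beta-d}$ is where $p>(d_\beta)'$ enters. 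Your heuristic that $\alpha s<\alpha-1$ ``balances the gradient's $\tau^{-1/\alpha}$ against $t^{-2s}dt/t$'' is the right intuition, but making it rigorous requires this localisation rather than the operator shortcut you sketch.
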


The proof of \eqref{eq:thm-difference} uses ideas in \cite{Franketal2021,Merz2021,BuiDAncona2023} and relies on pointwise estimates for the difference of the heat kernels associated to $\La$ and $\Ln$, as well as their time derivatives. These bounds are summarized in Proposition~\ref{prop-difference} below. To obtain these estimates, we use Duhamel's formula
$$
\me{-t\La}(x,y) - \me{-t\Ln}(x,y) = -\kappa\int_0^t d\tau \int_{\R^d}dz\, \me{-(t-\tau)\La}(x,z) |z|^{-\alpha}z\cdot\nabla_z\me{-\tau\Ln}(z,y).
$$
Due to the gradient perturbation, we only obtain suitable bounds if $\alpha s<\alpha-1$. In Appendix~\ref{a:optimallowerbound}, we discuss the necessity of this assumption and argue that our pointwise kernel bounds in Proposition~\ref{prop-difference} do not hold if $\alpha s\geq\alpha-1$.
On the other hand, \eqref{eq:thm-difference2} also covers the case $\alpha s\geq\alpha-1$ as this bound also involves the operator $\Ln$ on the right-hand side of the estimate. The proof of this inequality is technically more involved and does not rely on kernel bounds. Instead, we estimate $|(t\La\me{-t\La}-t\Ln\me{-t\Ln})f(x)|$ pointwise. While one might suspect that there is a bound similar to \eqref{eq:thm-difference2} involving $\||x|^{1-\gamma-\alpha s}|\La^{\frac{1-\gamma}{\alpha}}f|\|_p$ on the right-hand side, we are, unfortunately, not able to prove such a bound yet. One idea to obtain such a bound is to rewrite the above Duhamel formula as
$$
\me{-t\La}(x,y) - \me{-t\Ln}(x,y) = \kappa\int_0^t ds \int_{\R^d}dz\, \me{-(t-s)\Ln}(x,z) |z|^{-\alpha}z\cdot\nabla_z\me{-s\La}(z,y).
$$
This approach requires, however, gradient bounds for $\me{-t\La}$, e.g., those stated in \eqref{eq:heatkernelgradientconjecture0}, see also Appendix~\ref{a:reversedhardyoptimal} for a more detailed discussion. As such bounds are currently unavailable and are likely difficult to obtain, we believe it is an interesting problem to derive sharp bounds for $|\nabla_x\me{-t\La}(x,y)|$.

\smallskip
We now present generalized Hardy inequalities giving upper bounds for the scalar-valued Hardy potential in terms of $\La$.

\begin{theorem}[Generalized Hardy inequality]
  \label{thm-HardyIneq}
  Let $\alpha\in(1,2]$ with $\alpha<(d+2)/2$, $\beta\in ((d+\alpha)/2,d+M)$, $s\in(0,1]$, and $\kappa=\Psi(\beta)$ be defined by \eqref{eq:defbeta}.
  If $\alpha=2$, assume that the upper heat kernel bound \eqref{eq:heatkernelalpha2} holds.
  If $p\in((d_\beta)',d/(\alpha s))$, then
  \begin{align}
    \label{eq:thm-HardyIneq}
    \||x|^{-\alpha s} g\|_{L^p(\R^d)} \lesi \|(\Lambda_\kappa)^{s}g\|_{L^p(\R^d)} \quad \text{for all}\ g\in C_c^\infty(\R^d)
  \end{align}
  and, for all $1>\gamma\geq 1-\alpha s\geq0$,
  \begin{align}
    \label{eq:newgenhardynew2}
    \||x|^{\alpha(\frac{1-\gamma}{\alpha}-s)} |\La^{\frac{1-\gamma}{\alpha}} g| \|_{L^p(\R^d)}
    \lesssim \|(\La)^s g\|_{L^p(\R^d)}
    \quad \text{for all}\ g\in C_c^\infty(\R^d).
  \end{align}
\end{theorem}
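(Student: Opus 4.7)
The strategy is to reduce~\eqref{eq:thm-HardyIneq} to an $L^p$-boundedness statement for the weighted operator $|x|^{-\alpha s}\La^{-s}$, and then to derive~\eqref{eq:newgenhardynew2} from~\eqref{eq:thm-HardyIneq} by a simple substitution.

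For~\eqref{eq:thm-HardyIneq}, writing $g=\La^{-s}(\La^s g)$, it suffices to prove
\begin{equation*}
  \bigl\||x|^{-\alpha s}\La^{-s}f\bigr\|_{L^p(\R^d)}\;\lesi\;\|f\|_{L^p(\R^d)}.
\end{equation*}
I would represent $\La^{-s}$ via the Balakrishnan subordination identity
\begin{equation*}
  \La^{-s}f(x)=\frac{1}{\Gamma(s)}\int_0^\infty t^{s-1}\me{-t\La}f(x)\,dt,\qquad s\in(0,1],
\end{equation*}
and then insert the upper heat kernel bound~\eqref{eq:heatkernel}, which majorises $\me{-t\La}(x,y)$ by the fractional heat kernel $p_t^{(\alpha)}(x-y)$ times a one-sided weight involving the exponent $\beta-d$ and only one of the spatial variables. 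Splitting the $t$-integral into the four regimes $t^{1/\alpha}\lessgtr|y|$ and $t^{1/\alpha}\lessgtr|x-y|$, and using the standard identity $\int_0^\infty t^{s-1}p_t^{(\alpha)}(x-y)\,dt\sim|x-y|^{\alpha s-d}$, one obtains a pointwise bound
\begin{equation*}
  |\La^{-s}f(x)|\;\lesi\;\int_{\R^d} K_s(x,y)\,|f(y)|\,dy,
\end{equation*}
where $K_s$ is a homogeneous kernel of degree $\alpha s-d$ carrying a bounded factor involving $(|y|/(|y|\vee|x-y|))^{\beta-d}$ (or its reciprocal, depending on the sign of $d-\beta$). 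The claim then reduces to $\bigl\||x|^{-\alpha s}\int K_s(\cdot,y)f(y)\,dy\bigr\|_{L^p}\lesi\|f\|_{L^p}$, which I would establish by a weighted Stein--Weiss inequality in polar coordinates: the upper restriction $p<d/(\alpha s)$ matches the classical Sobolev scaling of the Riesz potential $|x-y|^{\alpha s-d}$, while the lower restriction $p>(d_\beta)'$ is precisely what is needed for the one-sided $(\beta-d)$-weight to yield an integrable kernel.

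For~\eqref{eq:newgenhardynew2}, I would set $\tilde s := s-(1-\gamma)/\alpha\in[0,s]$ and $\tilde g := \La^{(1-\gamma)/\alpha}g$, so that $\La^{\tilde s}\tilde g=\La^s g$ and~\eqref{eq:newgenhardynew2} becomes $\||x|^{-\alpha\tilde s}\tilde g\|_{L^p}\lesi\|\La^{\tilde s}\tilde g\|_{L^p}$, i.e., exactly~\eqref{eq:thm-HardyIneq} with $(s,g)$ replaced by $(\tilde s,\tilde g)$. The condition $p<d/(\alpha s)\leq d/(\alpha\tilde s)$ remains admissible, the degenerate case $\tilde s=0$ (when $\alpha s=1-\gamma$) is trivial, and a standard density argument handles the fact that $\tilde g$ need not be compactly supported.

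The main obstacle is the pointwise kernel estimate together with the subsequent weighted $L^p$ inequality. The non-symmetry of $\La$ forces the heat-kernel weight to depend on only one spatial variable, which is responsible for the asymmetric restriction $p\in((d_\beta)',d/(\alpha s))$ (as opposed to a symmetric pair of conditions arising in the symmetric Hardy operator case treated in \cite{Franketal2021}). Carefully tracking this one-sided weight through the $t$-integration and verifying that the Stein--Weiss step succeeds on the full prescribed range of $p$ is the most delicate part of the argument.
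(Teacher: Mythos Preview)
Your proposal is correct and follows essentially the same route as the paper: the paper first integrates the heat kernel bound~\eqref{eq:heatkernel} in $t$ to obtain the explicit Riesz kernel estimate $\La^{-s}(x,y)\sim|x-y|^{\alpha s-d}(1\wedge|y|/|x-y|)^{\beta-d}$ (Lemma~\ref{riesz}), and then verifies $L^p$-boundedness of $|x|^{-\alpha s}\La^{-s}$ via weighted Schur tests with power weights $(|x|/|y|)^{\delta/p}$ in the three regions $|x-y|\lesssim|x|\wedge|y|$, $|x|\lesssim|x-y|\lesssim|y|$, and $|y|\lesssim|x-y|\lesssim|x|$, which is exactly the concrete implementation of the Stein--Weiss-type argument you describe. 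The derivation of~\eqref{eq:newgenhardynew2} from~\eqref{eq:thm-HardyIneq} by the substitution $\tilde s=s-(1-\gamma)/\alpha$, $\tilde g=\La^{(1-\gamma)/\alpha}g$ is also identical to the paper's.
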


To prove \eqref{eq:equivalencesobolev1} for $s=1$ without using the above-outlined square function representation, the estimate $\||x|^{-\alpha}x\cdot\nabla g\|_{p} \lesssim \|\La g\|_{p}$ would be beneficial. As our proof of Theorem~\ref{thm-HardyIneq} relies on Riesz and heat kernel bounds for $\me{-t\La}$, this would require gradient estimates for $\me{-t\La}$, too. In Proposition~\ref{thm-HardyIneqNew}, we prove a new generalized Hardy inequality for $|x|^{-\alpha}x\cdot\nabla$ assuming that suitable bounds for the gradient of $\me{-t\La}$ are available.

\smallskip
With Theorems~\ref{squarefunctions}, \ref{thm-difference}, and~\ref{thm-HardyIneq} at hand, we now give the

\begin{proof}[Proof of Theorem~\ref{eqsob}]
  For $s\in(0,1)$, the square function estimates in \eqref{eq:squarefunctions2}, the triangle inequality, the reversed and generalized Hardy inequalities (Theorems~\ref{thm-difference}--\ref{thm-HardyIneq}) yield, whenever $\alpha s<\alpha-1$,
  \begin{align}
    \begin{split}
      \| (\Lambda_0)^{s}u\|_{L^p(\R^d)}
      & \sim \left\|\Big(\int_0^\vc t^{-2s}|t\Lambda_0 \me{-t\Lambda_0}f|^2\f{dt}{t}\Big)^{1/2}\right\|_{L^p(\R^d)} \\
      & \le \left\|\Big(\int_0^\vc t^{-2s}|t\La \me{-t\La}f|^2\f{dt}{t}\Big)^{1/2}\right\|_{L^p(\R^d)} \\
      & \quad + \left\|\Big(\int_0^\vc t^{-2s}|(t \Lambda_0 \me{-t\Lambda_0} -t \Lambda_\kappa \me{-t\Lambda_\kappa})f|^2\f{dt}{t}\Big)^{\frac12} \right\|_{L^p(\R^d)} \\
      & \lesssim \| (\La)^{s}u\|_{L^p(\R^d)} + \left\||x|^{-\alpha s} f \right\|_{L^p(\R^d)}
        \lesssim \| (\La)^{s}u\|_{L^p(\R^d)}.
    \end{split}
  \end{align}
  This proves \eqref{eq:equivalencesobolev1}.
  Similarly, for any $\gamma\in(0,1)$ with $1-\gamma \leq \alpha s<\alpha-\gamma$ if $\alpha s\geq\alpha-1$,
  \begin{align}
    \begin{split}
      \| (\La)^{s}u\|_{L^p(\R^d)}
      & \sim \left\|\Big(\int_0^\vc t^{-2s}|t\La \me{-t\La}f|^2\f{dt}{t}\Big)^{1/2}\right\|_{L^p(\R^d)} \\
      & \le \left\|\Big(\int_0^\vc t^{-2s}|t\Ln \me{-t\Ln}f|^2\f{dt}{t}\Big)^{1/2}\right\|_{L^p(\R^d)} \\
      & \quad + \left\|\Big(\int_0^\vc t^{-2s}|(t \Lambda_0 \me{-t\Lambda_0} -t \Lambda_\kappa \me{-t\Lambda_\kappa})f|^2\f{dt}{t}\Big)^{\frac12} \right\|_{L^p(\R^d)} \\
      & \lesssim \| (\Ln)^{s}u\|_{L^p(\R^d)} + \left\||x|^{-\alpha s} f \right\|_{L^p(\R^d)} + \left\|\frac{|\Ln^{\frac{1-\gamma}{\alpha}}f|}{|x|^{\alpha s+\gamma-1}}\right\|_{L^p(\R^d)} \\ 
      & \lesssim \| (\Ln)^{s}u\|_{L^p(\R^d)},
    \end{split}
  \end{align}  
  which proves \eqref{eq:equivalencesobolev2} for all $s\in(0,1)$. For $s=1$, the square function estimates are not necessary. We merely use the triangle inequality and Hardy's inequality (see \eqref{eq:newgenhardynew2} with $\kappa=0$, $s=1$, and $\gamma=0$) to get the desired estimate.
\end{proof}

\begin{remark}
  In view of the above discussion, it is an open problem to decide whether \eqref{eq:equivalencesobolev1} holds for a wider range of parameters. If the gradient bound in \eqref{eq:heatkernelgradientconjecture0} was true for some $\gamma>2\alpha-d$, then, in view of the above proof, \eqref{eq:thm-difference2conj}, Proposition~\ref{thm-HardyIneqNew}, and \eqref{eq:newgenhardynew2}, Formula~\eqref{eq:equivalencesobolev1} would hold for all $s\leq1$ and $\beta>d-1$.
\end{remark}

\subsection{Discussion of Theorem~\ref{eqsob}}

In this subsection, we put Theorem~\ref{eqsob} into context and compare it with earlier, related results.
In Section~\ref{s:applications}, we present two concrete applications of Theorem~\ref{eqsob}.

\subsubsection{Comparison with earlier results}

We compare Theorem~\ref{eqsob} with the results in~\cite{Killipetal2018,Franketal2021,Merz2021,BuiDAncona2023}, where the $L^p$-Sobolev spaces generated by the Hardy operator $L_\kappa=(-\Delta)^{\alpha/2}+\kappa/|x|^\alpha$ with $\kappa\geq-\kappa_*$ were considered\footnote{We point out two misprints in \cite[(3)]{BuiDAncona2023}: There, the operators $(-\Delta)^{\alpha s/2}$ and $(\mathcal{L}_a)^{\alpha s/4}$ should be replaced with $(-\Delta)^{\alpha s/4}$ and $(\mathcal{L}_a)^{s/2}$, respectively.}.
Using the parameterization of $\kappa$ via~\eqref{eq:defdeltakappa}, the following statements hold for $s\in(0,1]$ and $1<p<\infty$:
\begin{enumerate}
\item If $\frac{d}{d-(\delta_\kappa\vee0)}<p<\frac{d}{(\delta_\kappa+\alpha s)\vee 0}$, then
  \begin{align}
    \label{eq:eqsobold1}
    \|(L_0)^{s}f \|_{L^p(\R^d)}
    \lesssim \| (L_\kappa)^{s}f\|_{L^p(\R^d)}
    \quad \text{for all} \ f\in C_c^\infty(\R^d).
  \end{align}
  
\item If $\frac{d}{d-(\delta_\kappa\vee0)}<p<\frac{d}{\alpha s\vee \delta_\kappa}$, then
  \begin{align}
    \label{eq:eqsobold2}
    \| (L_{\kappa})^{s}f\|_{L^p(\R^d)} 
    \lesssim \| (L_0)^{s}f \|_{L^p(\R^d)}
    \quad \text{for all} \ f\in C_c^\infty(\R^d).
  \end{align}
\end{enumerate}
Let us compare the assumptions on $p$ in these statements for $L_\kappa$ with the conditions in Theorem~\ref{eqsob}. Keeping in mind that $\beta$ has a similar interpretation as $\delta_\kappa$, we note two differences.
\begin{enumerate}
\item The lower bound $p>\frac{d}{d-(\delta_\kappa\vee0)}$ in \eqref{eq:eqsobold1}--\eqref{eq:eqsobold2} is replaced with $p>1\vee d/\beta$ in \eqref{eq:equivalencesobolev1}--\eqref{eq:equivalencesobolev2}.
  
\item The upper bounds on $p$ in \eqref{eq:eqsobold1}--\eqref{eq:eqsobold2} are replaced with $p<d/(\alpha s)\wedge d/((d-\beta)\vee0)$ in \eqref{eq:equivalencesobolev1}--\eqref{eq:equivalencesobolev2}.
\end{enumerate}
The lower bounds on $p$ in \eqref{eq:eqsobold1}--\eqref{eq:eqsobold2} in \cite{Merz2021,BuiDAncona2023} and in \eqref{eq:equivalencesobolev1}--\eqref{eq:equivalencesobolev2} in Theorem~\ref{eqsob} arise in the reversed Hardy inequality in \cite[Proposition~3]{Merz2021} or \cite[Theorem~4.2]{BuiDAncona2023} and the generalized Hardy inequality \cite[Proposition~1]{Merz2021} involving $L_\kappa$, and their analogs in Theorems~\ref{thm-difference} and~\ref{thm-HardyIneq} involving $\La$.

The different upper bounds on $p$ in \eqref{eq:eqsobold1}--\eqref{eq:eqsobold2} come from the reversed Hardy inequality requiring $p<d/\max\{\delta_\kappa,0\}$ and the generalized Hardy inequality requiring $p<d/\max\{\alpha s+\delta_\kappa,0\}$. In the present situation, the reversed Hardy inequality holds for all $p<\infty$ and the generalized Hardy inequality holds for $p<d/(\alpha s)$. In particular, these two upper bounds are independent of $\beta$, which is due to the non-symmetry of $\La$ and the resulting sharp bounds for the associated semigroup in \eqref{eq:heatkernel}. 
Additionally, due to our use of square function estimates (Theorem~\ref{squarefunctions}), we also assume $p<d/((d-\beta)\vee0)$ for the validity of \eqref{eq:equivalencesobolev1}--\eqref{eq:equivalencesobolev2}.

Another novelty compared to the previous results on equivalence of Sobolev norms is the additional restriction $\alpha s<\alpha-1$ for the validity of \eqref{eq:equivalencesobolev1}, which is due to the gradient in the perturbation $\kappa|x|^{-\alpha}x\cdot\nabla$ as we discussed in the proof of Theorem~\ref{eqsob} above.

\subsubsection{Impact of Theorem~\ref{eqsob}}
\label{ss:impact}

Homogeneous operators like $\La$ or $L_\kappa$ frequently arise as model operators or scaling limits in more complicated problems. Hence, their analysis is paramount to advance studies of the more complicated problems they originate from. In many applications, one has to deal with functions, e.g., powers greater than one, of these operators, which are, however, difficult to handle. The equivalence of Sobolev norm results in \cite{Killipetal2018,Franketal2021,Merz2021,BuiDAncona2023,FrankMerz2023,BuiMerz2023} and Theorem~\ref{eqsob} here are useful to overcome this obstacle as they allow to replace the usually difficult to handle operator functions of $\La$ or $L_\kappa$ with easier to handle operator functions of $(-\Delta)^{\alpha/2}$.

\smallskip
For instance, the equivalence of Sobolev norms for $L_\kappa$ established in \cite{Killipetal2018} for $\alpha=2$ was paramount for Killip, Miao, Murphy, Visan, Zhang, and Zheng \cite{Killipetal2017,Killipetal2017T} to study global well-posedness and scattering of nonlinear Schr\"odinger equations with the inverse-square potentials $\kappa/|x|^2$. 

In mathematical physics, the equivalence of Sobolev norms for $L_\kappa$ with $\alpha=1$ in \cite{Franketal2021} was crucial, e.g., to study the electron density of large, relativistically described atoms close to the nucleus, and, in particular, to prove Lieb's strong Scott conjecture \cite{Lieb1981,Simon1984} in \cite{Franketal2020P,MerzSiedentop2022}; see also \cite{Franketal2023} for a shorter proof and the recent review \cite{Franketal2023T} for an introduction to the Scott conjectures.
We explain this point in more detail to outline potential applications of Theorem~\ref{eqsob}. The strong Scott conjecture asserts that the (appropriately rescaled) probability density function of finding one of $Z$ electrons in an atom of nuclear charge $Z$ at a position $x$ on the length scale $Z^{-1}$---called one-particle ground state density---is, to leading order as $Z\to\infty$, described by the probability density function of the same atom, but where the electron-electron interactions are neglected. In this scenario, the energy of a single electron is described by $\sqrt{1-\Delta}-1+\kappa/|x|$ in $L^2(\R^3)$ where $\sqrt{1-\Delta}-1$ describes the kinetic energy of the electron and is a bounded perturbation of $\sqrt{-\Delta}$, and where $\kappa<0$ is the effective coupling strength between the electron and the nucleus. Thus, the resolution of the strong Scott conjecture can be interpreted as the successful derivation of an effective one-particle equation: The prohibitively difficult study of a genuine many-particle quantity---the ground state density---is reduced to the analysis of a one-particle quantity, namely the sums of squares of the eigenfunctions of the single-particle operator $\sqrt{1-\Delta}-1+\kappa/|x|$.
Let us explain why the equivalence of Sobolev norms is crucial to prove the strong Scott conjecture. On a technical level, the proof in \cite{Franketal2020P} relies on first order perturbation theory\footnote{For single or finitely many eigenvalues, this is known as Feynman--Hellmann theorem.} for the infinitely many eigenvalues of $\sqrt{1-\Delta}-1+\kappa|x|^{-1}-\lambda U(x)$ for a sufficiently well-behaved perturbation $U:\R^d\to\R$, regarded as functions of $\lambda\in\R$ in a neighborhood of zero. Among others, this perturbation theory requires to prove relative trace class bounds of the perturbation $U$ with respect to powers of $\sqrt{1-\Delta}-1+\kappa|x|^{-1}$; more concretely, one has to show that $(\sqrt{1-\Delta}-1+\kappa|x|^{-1}+M)^{-s}U(\sqrt{1-\Delta}-1+\kappa|x|^{-1}+M)^{-s}$ is trace class for some $s>1/2$ and $M>0$ such that $\sqrt{1-\Delta}-1+\kappa|x|^{-1}+M$ is a positive and hence invertible operator\footnote{Actually, one has to show this relative trace class condition only in every fixed angular momentum channel, i.e., when the operator is restricted to the space of square-integrable functions of the form $f(|x|)Y_\ell(x/|x|)$, where $Y_\ell$ denotes the $\ell$-th spherical harmonic on the unit sphere.}. To verify the relative trace class condition, one aims to replace $(\sqrt{1-\Delta}-1+\kappa|x|^{-1}+M)^{-s}$ with the Fourier multiplier $(\sqrt{1-\Delta}-1+M)^{-s}$. While this replacement is straightforward for $s\leq1/2$ (by using Hardy's inequality) justifying it for $s>1/2$ crucially relies on the equivalence of Sobolev norms in \cite{Franketal2021} (i.e., the analog of Theorem~\ref{eqsob} where the gradient perturbation $|x|^{-\alpha}x\cdot\nabla$ is replaced with the Hardy potential $|x|^{-\alpha}$).

In the context of the present work, many-particle equations involving $(-\Delta)^{\alpha/2}+b(x)\cdot\nabla$ arise in physics e.g., to describe plasmas or stellar matter, see, e.g., \cite{Dolbeault1991,BouchutDolbeault1995}, statistical properties of laser light \cite{Risken1989}, diffusion in random media, see, e.g., \cite{Albeverioetal2003}, or in biology, e.g., to describe chemotaxis by the Patlak--Keller--Segel model, see, e.g., \cite{FournierJourdain2017,Tardy2024}. While these works are concerned with the case $\alpha=2$, it has become clear that physical, chemical, or biological systems where particles appear to move slowly for certain time intervals and suddenly fly as in a jet flow should rather be described by fractional Laplacians; see, e.g., \cite{Schertzeretal2001} for references pointing to research in atmospheric science, anomalous diffusion, geophysics, and maritime science.
The mean-field limit and propagation of chaos of such many-particle equations---that is, the derivation of effective, nonlinear effective one-particle equations describing these many-particle systems---first envisioned by Boltzmann, and mathematically first formalized by Kac \cite{Kac1956} and McKean \cite{McKean1966,McKean1967}\footnote{See \cite{Aziz1969Vol2} for a reprint of \cite{McKean1967}.}, has been investigated in great detail for sufficiently regular drifts $b(x)$, excluding scaling-critical drifts considered here; for $\alpha=2$, see, e.g., the lecture notes \cite{Sznitman1991,Meleard1996}, the recent works \cite{Bolleyetal2010,Mischleretal2015,Cattiaux2024,Duongetal2025}, and \cite{ChaintronDiez2022} for a review; for $\alpha\in(1,2)$, see, e.g., the recent works \cite{Cavallazzi2025,Haoetal2024,Haoetal2024P}.
The mean-field limit equations---called nonlinear Fokker--Planck equations---have been studied in the fundamental works of McKean \cite{McKean1966} and Vlasov \cite{Vlasov1968}---see also \cite{Funaki1984,Sznitman1991} and the monograph \cite{CarmonaDelarue2018Vol1,CarmonaDelarue2018Vol2}---for $\alpha=2$ and \cite{BarbuRockner2024} for $\alpha\in(1,2)$; 
see also \cite{CaffarelliVasseur2010,Silvestre2012,Silvestre2012H,WeiTian2015} for the analysis of the linear Fokker--Planck equation involving $(-\Delta)^{\alpha/2}-b(x)\cdot\nabla$ with $\alpha<2$. 
In this light, the results in the present paper will not only be useful to study nonlinear equations involving $\La$ but also to advance the analysis of many-particle equations involving $\La$ and to derive effective one-particle equations in the presence of critically singular drift terms like $|x|^{-\alpha}x\cdot\nabla$.

\subsection*{Organization}

In Section~\ref{s:preliminaries}, we recall kernel bounds for $\me{-t\La}$ and use them, together with a Phragm\'en--Lindel\"of argument, to prove bounds for complex-time heat kernels. We use these bounds to prove novel bounds for the kernels of $(t\La)^k\me{-t\La}$ and corresponding $L^p\to L^q$-estimates. We apply the latter bounds in Section~\ref{s:squarefunctions} to prove the square function estimates for $(\La)^{s}$ in Theorem~\ref{squarefunctions}.
In Section~\ref{s:newboundsdifferenceskernels}, we prove bounds for the kernel of the difference $t\Ln \me{-t\Ln}- t\La \me{-t\La}$.
These bounds are crucial to prove the reversed Hardy inequality (Theorem~\ref{thm-difference}) in Section~\ref{s:reversedhardy}.
In Section~\ref{s:generalizedhardy}, we prove the generalized Hardy inequality (Theorem~\ref{thm-HardyIneq}). In the appendices, we prove auxiliary results and discuss the necessity of $\alpha s<\alpha-1$ for the validity of \eqref{eq:equivalencesobolev1}.

\section{Estimates involving the heat kernel of $\Lambda_\kappa$}
\label{s:preliminaries}

In this section, we collect known pointwise estimates for the heat kernel $\me{-t\Lambda_\kappa}$ and derive novel bounds for the complex-time heat kernel and $(t\Lambda_\kappa)^k\me{-t\Lambda_\kappa}$ with $k\in\N$. 

To begin, we introduce some notation.
We denote the average of a measurable function $f$ over a measurable set $E$ with $0<|E|<\infty$ by
$$
\fint_E f(x)dx=\f{1}{|E|}\int_E f(x)dx.
$$
Given a ball $B$, we associate annuli $S_j(B):=2^{j}B\backslash 2^{j-1}B$ for $j=1, 2, 3, \ldots$. For $j=0$, we write $S_0(B)=B$.
For $p>0$, the Hardy-Littlewood maximal function $\mathcal{M}_p$ is defined as
$$
\mathcal{M}_p f(x) := \sup_{B\ni x}\Big(\f{1}{|B|}\int_B|f(y)|^p\,dy\Big)^{1/p}, \ x\in \mathbb{R}^d,
$$
where the supremum is taken over all balls $B$ containing $x$. When $p=1$, we write $\mathcal M$ instead of $\mathcal M_1$.

\subsection{Spatially averaged estimates}
\label{s:averagedestimates}

In this section, we prove spatially weighted estimates for a family of operators whose kernels behave similarly to the heat kernel bounds of $\Lambda_\kappa$, presented in \eqref{eq:heatkernel} below. We will use them to prove bounds for time-derivatives of the heat kernel of $\La$ in Proposition~\ref{thm-ptk} and the square function estimates in Section~\ref{s:squarefunctions} below.

We start with some auxiliary estimates, whose manual proofs we omit.

\begin{lemma}
  \label{lem1-Tt}
  Let $d\in\N$ and $\alpha>0$. Then the following estimates hold.
  \begin{enumerate}
  \item[(a)] For all $\epsilon\in (0,d)$ there exists $C=C_\epsilon>0$ so that for all $t>0$,
    $$
    \int_{B(0,t)}\Big(\f{t}{|x|}\Big)^{d-\epsilon} dx\leq C t^{d}.
    $$
    
  \item[(b)] For all $\epsilon>0$, there exists $C=C_\epsilon>0$ such that
    $$
    \int_{\Rd} \f{1}{t^{d/\alpha}}\Big(\f{t^{1/\alpha}+|x-y|}{t^{1/\alpha}}\Big)^{-d-\epsilon} dy \leq C
    $$
    for all $x\in\R^d$.
    
  \item[(c)] For all $\epsilon>0$, there exists $C=C_\epsilon>0$ such that
    $$
    \int_{\Rd} \f{1}{t^{d/\alpha}}\Big(\f{t^{1/\alpha}+|x-y|}{t^{1/\alpha}}\Big)^{-d-\epsilon}f(y) dy \leq C\mathcal Mf(x)
    $$
    for all $t>0$, $x\in\R^d$, and $f\in L^1_{\rm loc}(\mathbb R^d)$.
  \end{enumerate}
\end{lemma}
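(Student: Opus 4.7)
All three estimates in Lemma~\ref{lem1-Tt} are elementary exercises in integration, and my plan is to treat them in turn. For (a), I would switch to spherical coordinates. Since the integrand is radial,
\[
  \int_{B(0,t)}\Big(\f{t}{|x|}\Big)^{d-\epsilon}\,dx
  = c_d\, t^{d-\epsilon} \int_0^t r^{\epsilon-1}\,dr
  = \f{c_d}{\epsilon}\, t^{d},
\]
which is finite thanks to $\epsilon > 0$. The hypothesis $\epsilon < d$ is not needed for convergence; it merely identifies the regime in which $t/|x|$ is genuinely a large factor inside $B(0,t)$.

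For (b), the natural step is to rescale. Setting $z = (y-x)/t^{1/\alpha}$ gives $dy = t^{d/\alpha}\,dz$ and eliminates $t$ entirely, reducing the integral to $\int_{\R^d}(1+|z|)^{-d-\epsilon}\,dz$. This last integral is finite because the exponent exceeds $d$, which one checks by splitting into $|z| \leq 1$ and $|z| > 1$ and using polar coordinates on the second piece.

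For (c), I would use the dyadic annular decomposition introduced just before the lemma. Set $B_0 := B(x, t^{1/\alpha})$ and $S_j(B_0) := 2^j B_0 \setminus 2^{j-1} B_0$ for $j \geq 1$. On $B_0$ the kernel is pointwise bounded by $c\, t^{-d/\alpha}$, and on $S_j(B_0)$ by $c\, t^{-d/\alpha}\, 2^{-j(d+\epsilon)}$. Pairing these bounds with the trivial estimate $\int_{2^j B_0} |f(y)|\,dy \leq |2^j B_0|\, \mathcal M f(x) \lesssim (2^j t^{1/\alpha})^d\, \mathcal M f(x)$ and summing the geometric series $\sum_{j \geq 0} 2^{-j\epsilon}$, which converges thanks to $\epsilon > 0$, yields the desired pointwise domination by $\mathcal M f(x)$. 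There is no substantive technical obstacle here; the most involved step is the routine dyadic argument in (c), which is consistent with the paper's remark that these manual proofs are omitted.
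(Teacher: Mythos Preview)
Your proposal is correct. The paper explicitly omits the proof of this lemma, remarking only that the ``manual proofs'' are left out; your arguments---polar coordinates for (a), rescaling for (b), and dyadic annular decomposition for (c)---are exactly the standard elementary computations one would expect and there is nothing further to compare against.
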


Using these estimates, we prove the following estimates.

\begin{theorem}
  \label{thm-Tt}
  Let $d\in\N$, $\alpha>0$, and $\beta>0$. Let $\{T_t\}_{t>0}$ be a family of linear operators on $L^2(\Rd)$ with their associated kernels $T_t(x,y)$. Assume that there exist $C>0$ and $\theta \in (-\vc,d)$ such that for all $t>0$ and $x,y \in \Rd\backslash \{0\}$,
  \begin{equation}\label{kernelTt}
    |T_t(x,y)|\leq Ct^{-d/\alpha}\Big(\f{t^{1/\alpha}+|x-y|}{t^{1/\alpha}}\Big)^{-d-\beta} \Big(1+\f{t^{1/\alpha}}{|y|}\Big)^\theta.
  \end{equation}
  Assume that $\f{d}{d-\theta}\vee 1<p\leq q< \vc$, where $d_\theta$ is defined as in \eqref{eq-d beta}. Then, for any ball $B$, every $t>0$, and $j\ge 2$, we have
  \begin{equation}\label{eq1-Tt}
    \Big(\fint_{S_j(B)}|T_tf|^q\Big)^{1/q}
    \lesssim \max\Big\{\Big(\f{r_B}{t^{1/\alpha}}\Big)^{d/p}, \Big(\f{r_B}{t^{1/\alpha}}\Big)^{d}\Big\} \Big(1+\f{2^jr_B}{t^{1/\alpha}}\Big)^{-d-\beta} \Big(\fint_B|f|^p\Big)^{1/p}
  \end{equation}
  for all $f\in L^p(\Rd)$ supported in $B$, and
  \begin{equation}\label{eq2-Tt}
    \Big(\fint_{B}|T_tf|^q\Big)^{1/q}
    \lesssim \max\Big\{\Big(\f{2^jr_B}{t^{1/\alpha}}\Big)^d,\Big(\f{2^jr_B}{t^{1/\alpha}}\Big)^{d/p} \Big\} \Big(1+\f{2^jr_B}{t^{1/\alpha}}\Big)^{-d-\beta} \Big(\fint_{S_j(B)}|f|^p\Big)^{1/p}
  \end{equation}
  for all $f\in L^p(\mathbb R^d)$ supported in $S_j(B)$.
\end{theorem}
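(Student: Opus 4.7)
The plan is to derive a pointwise upper bound on $|T_tf(x)|$ on the target set and then take the $L^q$-average, which is immediate since the bound will be uniform in $x$. The key geometric observation is that for $x\in S_j(B)$ and $y\in B$ with $j\ge 2$ (and symmetrically, for $x\in B$ and $y\in S_j(B)$), one has the uniform comparability $|x-y|\sim 2^jr_B$. Consequently, the prefactor $t^{-d/\alpha}(1+|x-y|/t^{1/\alpha})^{-d-\beta}$ from \eqref{kernelTt} collapses into the constant $t^{-d/\alpha}(1+2^jr_B/t^{1/\alpha})^{-d-\beta}$ that appears on the right-hand side of both \eqref{eq1-Tt} and \eqref{eq2-Tt}.

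For \eqref{eq1-Tt}, I would fix $x\in S_j(B)$, insert \eqref{kernelTt} into $T_tf(x)=\int T_t(x,y)f(y)\,dy$, pull out the prefactor above, and apply H\"older's inequality with exponents $p$ and $p'$ to reduce matters to estimating the weight integral
\[
W:=\int_B \left(1+\f{t^{1/\alpha}}{|y|}\right)^{\theta p'}dy.
\]
When $\theta\le 0$ the integrand is bounded by $1$, so $W\lesi |B|$. When $\theta>0$, I would use the elementary inequality $(1+s)^{\theta p'}\lesi 1+s^{\theta p'}$, control the constant piece by $|B|$, and bound the singular piece by extending integration to all of $B(0,t^{1/\alpha})$ and invoking Lemma~\ref{lem1-Tt}(a) with $\epsilon=d-\theta p'>0$, which yields $\int_{B(0,t^{1/\alpha})}(t^{1/\alpha}/|y|)^{\theta p'}dy\lesi t^{d/\alpha}$. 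The strict inequality $\theta p'<d$ is equivalent to $p>d/(d-\theta)$, which combined with the trivial requirement $p>1$ is exactly the standing hypothesis $d/(d-\theta)\vee 1<p$. In either case, $W^{1/p'}\lesi r_B^{d/p'}+t^{d/(\alpha p')}$.

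Assembling the pieces produces the pointwise bound
\[
|T_tf(x)|\lesi t^{-d/\alpha}\Big(1+\f{2^jr_B}{t^{1/\alpha}}\Big)^{-d-\beta}\bigl(r_B^{d/p'}+t^{d/(\alpha p')}\bigr)\|f\|_{L^p(B)}.
\]
Writing $\|f\|_{L^p(B)}\sim r_B^{d/p}(\fint_B|f|^p)^{1/p}$ and simplifying $r_B^{d/p'+d/p}\cdot t^{-d/\alpha}=(r_B/t^{1/\alpha})^d$ together with $t^{d/(\alpha p')-d/\alpha}\cdot r_B^{d/p}=(r_B/t^{1/\alpha})^{d/p}$ produces precisely the $\max\{\cdot,\cdot\}$ factor in \eqref{eq1-Tt}. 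The second estimate \eqref{eq2-Tt} follows from an identical argument with $x\in B$ and $y\in S_j(B)$: the weight integral is now taken over $S_j(B)$, whose volume is $\sim(2^jr_B)^d$, so $r_B$ is replaced by $2^jr_B$ throughout the simplifications, yielding the two competing powers $(2^jr_B/t^{1/\alpha})^d$ and $(2^jr_B/t^{1/\alpha})^{d/p}$. I do not foresee a substantial obstacle; the only delicate point is the $L^{p'}$-integrability of the singular weight near the origin, which is precisely what the assumption $p>d/(d-\theta)\vee 1$ is designed to guarantee.
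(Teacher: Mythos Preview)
Your proposal is correct and follows essentially the same approach as the paper: exploit $|x-y|\sim 2^jr_B$ to extract the prefactor, then control the weighted integral of $|f|$ via H\"older with exponents $p,p'$ and Lemma~\ref{lem1-Tt}(a), using $\theta p'<d$. The only cosmetic difference is that the paper splits the $y$-integral into $\{|y|\le t^{1/\alpha}\}$ and its complement before applying H\"older, whereas you apply H\"older first and then bound $W$; the substance is identical.
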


\begin{proof}
Note that the estimate \eqref{kernelTt} boils down to 
\begin{equation*}
    |T_t(x,y)|\leq Ct^{-d/\alpha}\Big(\f{t^{1/\alpha}+|x-y|}{t^{1/\alpha}}\Big)^{-d-\beta},
  \end{equation*}
 as long as $\theta \in (-\vc, 0)$.
 
Hence, it suffices to prove the theorem for $\theta\in [0,d)$. In this case, $\f{d}{d-\theta}\vee 1 = \f{d}{d-\theta}$.

  Since $|x-y|\sim 2^jr_B$ whenever $x\in S_j(B)$ and $y\in B$, 
  \begin{equation}
    \label{eq1-proof of Tt estimates}
    \begin{aligned}
      \|T_tf\|_{L^q(S_j(B))}
      & \leq \left\{\int_{S_j(B)}\Big[\int_B  t^{-d/\alpha}\Big(\f{t^{1/\alpha}+|x-y|}{t^{1/\alpha}}\Big)^{-d-\beta}\Big(1+\f{t^{1/\alpha}}{|y|}\Big)^\theta|f(y)|dy\Big]^qdx\right\}^{1/q}\\
      & \leq t^{-d/\alpha} \Big(1+\f{2^jr_B}{t^{1/\alpha}}\Big)^{-d-\beta}\left\{\int_{S_j(B)}\Big[\int_B  \Big(1+\f{t^{1/\alpha}}{|y|}\Big)^\theta|f(y)|dy\Big]^qdx\right\}^{1/q}\\
      & \leq t^{-d/\alpha}|2^jB|^{1/q} \Big(1+\f{2^jr_B}{t^{1/\alpha}}\Big)^{-d-\beta} \int_B  \Big(1+\f{t^{1/\alpha}}{|y|}\Big)^\theta|f(y)|dy.
    \end{aligned}
  \end{equation}
  We now estimate the integral on the right hand side of \eqref{eq1-proof of Tt estimates}. We have
  \[
    \begin{aligned}
      & \int_B \Big(1+\f{t^{1/\alpha}}{|y|}\Big)^\theta|f(y)|dy \\
      & \quad = \int_{B\cap B(0,t^{1/\alpha})}   \Big(1+\f{t^{1/\alpha}}{|y|}\Big)^\theta|f(y)|dy+\int_{B\cap B(0,t^{1/\alpha})^c}   \Big(1+\f{t^{1/\alpha}}{|y|}\Big)^\theta|f(y)|dy \\
      & \quad =: I_1+I_2.
    \end{aligned}
  \]	
  By H\"older's inequality and Lemma~\ref{lem1-Tt} (since $\theta p'<d$),
  \[
    \begin{aligned}
      I_1 & \le \Big[\int_{B(0,t^{1/\alpha})}   \Big(1+\f{t^{1/\alpha}}{|x|}\Big)^{\theta p'}dy\Big]^{1/p'}\|f\|_{L^p(B)}\le Ct^{ \f{d}{\alpha p'}} |B|^{1/p}\Big(\fint_B|f|^p\Big)^{1/p}.
    \end{aligned}
  \]
  We proceed to consider $I_2$. Here, we have $\Big(1+\f{t^{1/\alpha}}{|y|}\Big)^\theta\sim 1$. Hence,
  $$
  \begin{aligned}
    I_2&\lesi \|f\|_{L^1(B)} \lesi |B| \Big(\fint_B|f|^p\Big)^{1/p}.
  \end{aligned}
  $$
  Consequently,
  \[
    \begin{aligned}
      \int_B  \Big(1+\f{t^{1/\alpha}}{|y|}\Big)^\theta|f(y)|dy&\lesi \Big[t^{ \f{d}{\alpha p'}} |B|^{1/p}+ |B|\Big]\Big(\fint_B|f|^p\Big)^{1/p}.
    \end{aligned}
  \]
  Inserting this into \eqref{eq1-proof of Tt estimates} gives
  \[
    \begin{aligned}
      \|T_tf\|_{L^q(S_j(B))}
      & \leq t^{-d/\alpha}|2^jB|^{1/q} \Big(1+\f{2^jr_B}{t^{1/\alpha}}\Big)^{-d-\beta}\Big[t^{ \f{d}{\alpha p'}} |B|^{1/p}+ |B|\Big]\Big(\fint_B|f|^p\Big)^{1/p}\\ 
      & \leq  |2^jB|^{1/q} \Big(1+\f{2^jr_B}{t^{1/\alpha}}\Big)^{-d-\beta}\Big[t^{- \f{d}{\alpha p}} |B|^{1/p}+ t^{-d/\alpha}|B|\Big]\Big(\fint_B|f|^p\Big)^{1/p}\\
      & \leq  |2^jB|^{1/q} \Big(1+\f{2^jr_B}{t^{1/\alpha}}\Big)^{-d-\beta}\max\Big\{\Big(\f{r_B}{t^{1/\alpha}}\Big)^{d/p},\Big(\f{r_B}{t^{1/\alpha}}\Big)^{d} \Big\}\Big(\fint_B|f|^p\Big)^{1/p}.
    \end{aligned}
  \]
  Similarly, we obtain
  \begin{equation}
    \label{eq2-proof of Tt estimates}
    \begin{aligned}
      \|T_tf\|_{L^q(B)}
      & \leq \left\{\int_{B}\Big[\int_{S_j(B)}  t^{-d/\alpha}\Big(\f{t^{1/\alpha}+|x-y|}{t^{1/\alpha}}\Big)^{-d-\beta}\Big(1+\f{t^{1/\alpha}}{|y|}\Big)^\theta|f(y)|dy\Big]^qdx\right\}^{1/q}\\
      &\leq t^{-d/\alpha} \Big(1+\f{2^jr_B}{t^{1/\alpha}}\Big)^{-d-\beta}\left\{\int_{B}\Big[\int_{S_j(B)}  \Big(1+\f{t^{1/\alpha}}{|y|}\Big)^\theta|f(y)|dy\Big]^qdx\right\}^{1/q}\\
      &\leq t^{-d/\alpha}|B|^{1/q} \Big(1+\f{2^jr_B}{t^{1/\alpha}}\Big)^{-d-\beta} \int_{S_j(B)}  \Big(1+\f{t^{1/\alpha}}{|y|}\Big)^\theta|f(y)|dy.
    \end{aligned}
  \end{equation}
  We now write
  \[
    \begin{aligned}
      & \int_{S_j(B)}  \Big(1+\f{t^{1/\alpha}}{|y|}\Big)^\theta|f(y)|dy \\
      & \quad = \int_{S_j(B)\cap B(0,t^{1/\alpha})}   \Big(1+\f{t^{1/\alpha}}{|y|}\Big)^\theta|f(y)|dy  + \int_{S_j(B)\cap B(0,t^{1/\alpha})^c}   \Big(1+\f{t^{1/\alpha}}{|y|}\Big)^\theta|f(y)|dy \\
      & \quad =: II_1+II_2.
    \end{aligned}
  \]	
  Similarly to the term $I_1$, by H\"older's inequality and Lemma~\ref{lem1-Tt},
  \[
    \begin{aligned}
      II_1 & \le \Big[\int_{B(0,t^{1/\alpha})}   \Big(1+\f{t^{1/\alpha}}{|x|}\Big)^{\theta p'}dy\Big]^{1/p'}\|f\|_{L^p(S_j(B))} \\
           &\le Ct^{ \f{d}{\alpha p'}} |2^jB|^{1/p}\Big(\fint_{S_j(B)}|f|^p\Big)^{1/p}.
    \end{aligned}
  \]
  We proceed to consider $II_2$. Here, $\Big(1+\f{t^{1/\alpha}}{|y|}\Big)^\theta\sim 1$. Therefore,
  $$
  \begin{aligned}
    II_2&\lesi \|f\|_{L^1(S_j(B))}\lesi |2^jB| \Big(\fint_{S_j(B)}|f|^p\Big)^{1/p}.
  \end{aligned}
  $$
  The estimates for $II_1$ and $II_2$ yield
  \[
    \begin{aligned}
      \int_{S_j(B)}  \Big(1+\f{t^{1/\alpha}}{|y|}\Big)^\theta|f(y)|dy&\lesi \Big[t^{ \f{d}{\alpha p'}} |2^jB|^{1/p}+ |2^jB|\Big]\Big(\fint_{2^jB}|f|^p\Big)^{1/p}.
    \end{aligned}
  \]
  Inserting this into \eqref{eq2-proof of Tt estimates} gives
  \[
    \begin{aligned}
      \|T_tf\|_{L^q(B)}
      & \leq t^{-d/\alpha}|B|^{1/q} \Big(1+\f{2^jr_B}{t^{1/\alpha}}\Big)^{-d-\beta}\Big[t^{ \f{d}{\alpha p'}} |2^jB|^{1/p}+ |2^jB|\Big]\Big(\fint_{S_j(B)}|f|^p\Big)^{1/p}\\ 
      & \leq  |B|^{1/q} \Big(1+\f{2^jr_B}{t^{1/\alpha}}\Big)^{-d-\beta}\Big[t^{- \f{d}{\alpha p}} |2^jB|^{1/p}+ t^{-d/\alpha}|2^jB|\Big]\Big(\fint_{S_j(B)}|f|^p\Big)^{1/p}\\
      & \leq  |B|^{1/q} \Big(1+\f{2^jr_B}{t^{1/\alpha}}\Big)^{-d-\beta}\max\Big\{\Big(\f{2^jr_B}{t^{1/\alpha}}\Big)^{d/p},\Big(\f{2^jr_B}{t^{1/\alpha}}\Big)^{d} \Big\}\Big(\fint_{S_j(B)}|f|^p\Big)^{1/p}.
    \end{aligned}
  \]
  This concludes the proof.
\end{proof}

We now prove similar estimates for kernels of operators which are formally adjoint to those with integral kernels obeying the assumptions in Theorem~\ref{thm-Tt}.

\begin{theorem}
  \label{thm-St}
  Let $d\in\N$, $\alpha>0$, and $\beta>0$. Let $\{S_t\}_{t>0}$ be a family of linear operators on $L^2(\Rd)$ with their associated kernels $S_t(x,y)$. Assume that there exist $C>0$ and $\theta \in  {(-\vc,d)}$ such that for all $t>0$ and $x,y \in \Rd\backslash \{0\}$,
  \begin{equation}
    \label{kernelSt}
    |S_t(x,y)|\leq Ct^{-d/\alpha}\Big(\f{t^{1/\alpha}+|x-y|}{t^{1/\alpha}}\Big)^{-d-\beta} \Big(1+\f{t^{1/\alpha}}{|x|}\Big)^\theta.
  \end{equation}
  Assume that $1<p\leq q< \f{d}{\theta\vee 0}$ (with the convention $d/0 =\vc$). Then, for any ball $B$, every $t>0$, and $j\ge 2$, we have
  \begin{equation}\label{eq1-St}
    \Big(\fint_{S_j(B)}|S_tf|^q\Big)^{1/q}
    \lesssim \Big(\f{r_B}{t^{1/\alpha}}\Big)^d\Big(1+\f{t^{1/\alpha}}{2^jr_B}\Big)^{d/q} \Big(1+\f{2^jr_B}{t^{1/\alpha}}\Big)^{-d-\beta} \Big(\fint_B|f|^p\Big)^{1/p}
  \end{equation}
  for all $f\in L^p(\Rd)$ supported in $B$, and
  \begin{equation}\label{eq2-St}
    \Big(\fint_{B}|S_tf|^q\Big)^{1/q}
    \lesssim \Big(\f{2^jr_B}{t^{1/\alpha}}\Big)^d\Big(1+\f{t^{1/\alpha}}{r_B}\Big)^{d/q} \Big(1+\f{2^jr_B}{t^{1/\alpha}}\Big)^{-d-\beta} \Big(\fint_{S_j(B)}|f|^p\Big)^{1/p}
  \end{equation}
  for all $f\in L^p(\mathbb R^d)$ supported in $S_j(B)$.
\end{theorem}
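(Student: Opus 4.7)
The plan is to follow the template of the proof of Theorem~\ref{thm-Tt}, but with the spatial weight $(1+t^{1/\alpha}/|\cdot|)^\theta$ sitting inside the $x$-integration rather than the $y$-integration. Observe first that the transposed kernel $S_t^\sharp(x,y):=S_t(y,x)$ satisfies the hypothesis of Theorem~\ref{thm-Tt} with the same parameters $d,\alpha,\beta,\theta$, so one could alternatively deduce Theorem~\ref{thm-St} by duality: test $\|S_tf\|_{L^q(E)}$ against $\|g\|_{L^{q'}(E)}=1$, transpose via $\langle S_tf,g\rangle=\langle f,S_t^\sharp g\rangle$, and apply the Tt-estimate with the pair $(q',p')$ in place of $(p,q)$. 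Under this duality the Tt-constraint $\tilde p>d/(d-\theta)\vee 1$ becomes precisely $q<d/(\theta\vee 0)$ as assumed here. I will nevertheless carry out the direct argument in parallel to the proof of Theorem~\ref{thm-Tt}, since it keeps constants explicit.

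For \eqref{eq1-St}, I would fix $f$ supported in $B$ and $j\geq 2$. For $x\in S_j(B)$ and $y\in B$ one has $t^{1/\alpha}+|x-y|\sim t^{1/\alpha}+2^jr_B$, so the polynomial factor pulls out as $(1+2^jr_B/t^{1/\alpha})^{-d-\beta}$. After bounding $\int_B|f(y)|\,dy\leq|B|\,(\fint_B|f|^p)^{1/p}$ via H\"older, it remains to estimate $(\int_{S_j(B)}(1+t^{1/\alpha}/|x|)^{\theta q}\,dx)^{1/q}$. Split this integral at $|x|=t^{1/\alpha}$: on $\{|x|>t^{1/\alpha}\}$ the weight is $\sim 1$ and contributes at most $|S_j(B)|^{1/q}$; on $\{|x|\leq t^{1/\alpha}\}\subseteq B(0,t^{1/\alpha})$ Lemma~\ref{lem1-Tt}(a), applied with $\epsilon=d-\theta q$, yields a contribution of order $t^{d/(\alpha q)}$. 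This is the only place the hypothesis $q<d/(\theta\vee 0)$ (i.e.\ $\theta q<d$) is used. Inserting these bounds and dividing by $|S_j(B)|^{1/q}$ to pass to averages produces a prefactor of order $(r_B/t^{1/\alpha})^d(1+t^{d/(\alpha q)}|S_j(B)|^{-1/q})$, which is equivalent to $(r_B/t^{1/\alpha})^d(1+t^{1/\alpha}/(2^jr_B))^{d/q}$ since $(a+b)^s\sim a^s+b^s$ for $a,b\geq 0$, $s>0$. This is precisely \eqref{eq1-St}.

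The proof of \eqref{eq2-St} is symmetric with the roles of $B$ and $S_j(B)$ swapped: now $f$ is supported in $S_j(B)$, while the weight is still integrated in $x$ but over $B$. The same two-region decomposition produces $(\int_B(1+t^{1/\alpha}/|x|)^{\theta q}\,dx)^{1/q}\lesssim t^{d/(\alpha q)}+|B|^{1/q}$, and H\"older in $y$ contributes $|2^jB|(\fint_{S_j(B)}|f|^p)^{1/p}$; normalization then yields the factor $(2^jr_B/t^{1/\alpha})^d(1+t^{1/\alpha}/r_B)^{d/q}$ matching \eqref{eq2-St}. No new analytic step arises beyond those in the proof of Theorem~\ref{thm-Tt}. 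The only substantive change is that, since the singular weight is integrated against the output variable, the Tt-constraint $p>d/(d-\theta)\vee 1$ on the input exponent is replaced by the dual constraint $q<d/(\theta\vee 0)$ on the output exponent. The main obstacle is therefore purely bookkeeping: tracking the $d/q$-exponents correctly when recasting the intermediate bound into the form advertised in \eqref{eq1-St}--\eqref{eq2-St}.
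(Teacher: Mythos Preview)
Your proposal is correct and follows essentially the same approach as the paper: pull out the off-diagonal decay factor using $|x-y|\sim 2^j r_B$, apply H\"older in the $y$-variable to extract $\|f\|_{L^1}$, and estimate $\int (1+t^{1/\alpha}/|x|)^{\theta q}\,dx$ by splitting at $|x|=t^{1/\alpha}$ via Lemma~\ref{lem1-Tt}(a), which is exactly where the constraint $\theta q<d$ enters. Your additional observation that the result can alternatively be obtained from Theorem~\ref{thm-Tt} by duality (passing to the transposed kernel and the dual exponents $(q',p')$) is a nice remark not made explicit in the paper, but the direct computation you then carry out mirrors the paper's proof step for step.
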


\begin{proof}
Note that the estimate \eqref{kernelSt} boils down to 
\begin{equation*}
    |S_t(x,y)|\leq Ct^{-d/\alpha}\Big(\f{t^{1/\alpha}+|x-y|}{t^{1/\alpha}}\Big)^{-d-\beta}
  \end{equation*}
 as long as $\theta \in (-\vc, 0)$. Hence, it suffices to prove the theorem for $\theta \in [0,d)$. In this case, $\f{d}{\theta\vee 0} =\f{d}{\theta}$ with the convention $d/0=\vc$.

  Since $|x-y|\sim 2^jr_B$ whenever $x\in S_j(B)$ and $y\in B$, 
  \begin{equation}
    \label{eq1-proof of St estimates}
    \begin{aligned}
      \|S_tf\|_{L^q(S_j(B))}
      & \leq \left\{\int_{S_j(B)}\Big[\int_B  t^{-d/\alpha}\Big(\f{t^{1/\alpha}+|x-y|}{t^{1/\alpha}}\Big)^{-d-\beta}\Big(1+\f{t^{1/\alpha}}{|x|}\Big)^\theta|f(y)|dy\Big]^qdx\right\}^{1/q} \\
      & \leq t^{-d/\alpha} \Big(1+\f{2^jr_B}{t^{1/\alpha}}\Big)^{-d-\beta}\left\{\int_{S_j(B)}\Big[\int_B  \Big(1+\f{t^{1/\alpha}}{|x|}\Big)^\theta|f(y)|dy\Big]^qdx\right\}^{1/q}\\
      & \leq t^{-d/\alpha} \Big(1+\f{2^jr_B}{t^{1/\alpha}}\Big)^{-d-\beta} \Big[\int_{S_j(B)}   \Big(1+\f{t^{1/\alpha}}{|x|}\Big)^{\theta q}  dx\Big]^{1/q} \|f\|_{L^1(B)}.
    \end{aligned}
  \end{equation}
  On the other hand, by Lemma \ref{lem1-Tt},
  \begin{equation}
    \label{eq2- proof of St}
    \begin{aligned}
      \int_{S_j(B)} \Big(1+\f{t^{\frac1\alpha}}{|x|}\Big)^{\theta q}  dx
      & = \int_{S_j(B)\cap B(0,t^{\frac1\alpha})}\Big(1+\f{t^{\frac1\alpha}}{|x|}\Big)^{\theta q}  dx + \int_{S_j(B)\cap B(0,t^{1/\alpha})^c}\Big(1+\f{t^{\frac1\alpha}}{|x|}\Big)^{\theta q}  dx\\
      & \lesi  \int_{  B(0,t^{1/\alpha})}\Big(1+\f{t^{1/\alpha}}{|x|}\Big)^{\theta q}  dx + \int_{S_j(B)} dx \\
      & \lesi t^{d/\alpha} + |2^jB|
        \sim |2^jB|\Big(1+\f{t^{1/\alpha}}{2^jr_B}\Big)^d.
    \end{aligned}
  \end{equation}
  Inserting this into \eqref{eq1-proof of St estimates}, we get
  \[
    \begin{aligned}
      \|S_tf\|_{L^q(S_j(B))}
      & \lesi t^{-d/\alpha}|2^jB|^{1/q} \Big(1+\f{2^jr_B}{t^{1/\alpha}}\Big)^{-d-\beta} \Big(1+\f{t^{1/\alpha}}{2^jr_B}\Big)^{d/q} \|f\|_{L^1(B)} \\
      & \lesi t^{-d/\alpha}|2^jB| \Big(1+\f{2^jr_B}{t^{1/\alpha}}\Big)^{-d-\beta} \Big(1+\f{t^{1/\alpha}}{2^jr_B}\Big)^{d/q}|B| \Big(\fint_{B}|f|^p\Big)^{1/p} \\
      & \lesi |2^jB|^{1/q}\Big(\f{r_B}{t^{1/\alpha}}\Big)^d\Big(1+\f{t^{1/\alpha}}{2^jr_B}\Big)^{d/q} \Big(1+\f{2^jr_B}{t^{1/\alpha}}\Big)^{-d-\beta} \Big(\fint_{B}|f|^p\Big)^{1/p},
    \end{aligned}
  \]	
  where we used H\"older's inequality in the second inequality. This proves \eqref{eq1-St}.
  
  Similarly,
  \begin{equation}
    \label{eq2-proof of Tt estimates2}
    \begin{aligned}
      \|S_tf\|_{L^q(B)}
      & \leq \left\{\int_{B}\Big[\int_{S_j(B)}  t^{-d/\alpha}\Big(\f{t^{1/\alpha}+|x-y|}{t^{1/\alpha}}\Big)^{-d-\beta}\Big(1+\f{t^{1/\alpha}}{|x|}\Big)^\theta|f(y)|dy\Big]^qdx\right\}^{1/q} \\
      & \leq t^{-d/\alpha} \Big(1+\f{2^jr_B}{t^{1/\alpha}}\Big)^{-d-\beta}\left\{\int_{B}\Big[\int_{S_j(B)}  \Big(1+\f{t^{1/\alpha}}{|y|}\Big)^\theta|f(y)|dy\Big]^qdx\right\}^{1/q}\\
      & \leq t^{-d/\alpha} \Big(1+\f{2^jr_B}{t^{1/\alpha}}\Big)^{-d-\beta} \Big[\int_{B}   \Big(1+\f{t^{1/\alpha}}{|x|}\Big)^{\theta q}  dx\Big]^{1/q} \|f\|_{L^1(S_j(B))}.
    \end{aligned}
  \end{equation}
  Similarly to \eqref{eq2- proof of St},
  \[
    \int_{B} \Big(1+\f{t^{1/\alpha}}{|x|}\Big)^{\theta q}  dx \lesi |B| + t^{\f{d}{\alpha}}\sim |B|\Big(1+\f{t^{1/\alpha}}{r_B}\Big)^{d}.
  \]
  Therefore,
  \[
    \begin{aligned}
      \|S_tf\|_{L^q(B)}
      & \lesi t^{-d/\alpha}|B|^{1/q}\Big(1+\f{t^{1/\alpha}}{r_B}\Big)^{d/q} \Big(1+\f{2^jr_B}{t^{1/\alpha}}\Big)^{-d-\beta}  \|f\|_{L^1(S_j(B))} \\
      & \lesi t^{-d/\alpha}|B|^{1/q}\Big(1+\f{t^{1/\alpha}}{r_B}\Big)^{d/q} \Big(1+\f{2^jr_B}{t^{1/\alpha}}\Big)^{-d-\beta} |2^jB| \Big(\fint_{S_j(B)}|f|^p\Big)^{1/p} \\
      & \lesi |B|^{1/q}\Big(\f{2^jr_B}{t^{1/\alpha}}\Big)^d\Big(1+\f{t^{1/\alpha}}{r_B}\Big)^{d/q} \Big(1+\f{2^jr_B}{t^{1/\alpha}}\Big)^{-d-\beta}  \Big(\fint_{S_j(B)}|f|^p\Big)^{1/p}.
    \end{aligned}
  \]	
  This concludes the proof.
\end{proof}

\begin{theorem}
  \label{thm 2-Tt}
  Let $d\in\N$, $\alpha>0$, and $\beta>0$. 
  \begin{enumerate}[\rm (a)]
  \item Let $\{T_t\}_{t>0}$ be a family of linear operators as in Theorem \ref{thm-Tt} satisfying \eqref{kernelTt}. Then, for any $\f{d}{d-\theta}<p\leq q\le  \vc$, we have
    \[
      \|T_t\|_{p\to q} \lesi t^{-\f{d}{\alpha}(\f{1}{p}-\f{1}{q})}.
    \]
    
  \item Let $\{S_t\}_{t>0}$ be a family of linear operators as in Theorem \ref{thm-St} satisfying \eqref{kernelSt}. Then, for any $1\le p\leq q<  \f{d}{\theta}$, we have
    \[
      \|S_t\|_{p\to q} \lesi t^{-\f{d}{\alpha}(\f{1}{p}-\f{1}{q})}.
    \]
  \end{enumerate}
\end{theorem}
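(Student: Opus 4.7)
The plan is to reduce both parts to Young's convolution inequality applied to the ``unweighted'' kernel $K_t(u):=t^{-d/\alpha}((t^{1/\alpha}+|u|)/t^{1/\alpha})^{-d-\beta}$, whose norm is easily computed by rescaling $v=u/t^{1/\alpha}$ to give $\|K_t\|_r\lesi t^{-d(1-1/r)/\alpha}$ for every $r\in[1,\infty]$ (since $(d+\beta)r>d$). Consequently, Young's inequality with $1+1/q=1/r+1/p$ yields $\|K_t\ast f\|_q\lesi t^{-d(1/p-1/q)/\alpha}\|f\|_p$. Because $(1+t^{1/\alpha}/|\cdot|)^\theta\le 1$ when $\theta\le 0$, both parts follow immediately from this bound in that regime; hence I may and shall assume $\theta\in[0,d)$, where the singular weight near the origin is the sole obstacle.

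For part~(a), I would split $f=f_1+f_2$ with $f_1:=f\,\chi_{\{|y|\le t^{1/\alpha}\}}$. On $\supp f_2$ the weight is $\le 2^\theta$, so $|T_tf_2|\lesi K_t\ast|f_2|$ and Young directly delivers $\|T_tf_2\|_q\lesi t^{-d(1/p-1/q)/\alpha}\|f\|_p$. For $T_tf_1$, the hypothesis $p>d/(d-\theta)$ is equivalent to $\theta p'<d$, so H\"older's inequality combined with Lemma~\ref{lem1-Tt}(a) yields the uniform local-mass estimate
\[
\int_{|y|\le t^{1/\alpha}}\Big(\f{t^{1/\alpha}}{|y|}\Big)^\theta|f(y)|\,dy \lesi t^{d/(\alpha p')}\|f\|_p.
\]
Splitting into the regimes $|x|\le 2t^{1/\alpha}$ (where the convolution factor is $O(1)$) and $|x|\ge 2t^{1/\alpha}$ (where $|x-y|\sim|x|$ when $|y|\le t^{1/\alpha}$), this estimate gives the pointwise bound $|T_tf_1(x)|\lesi t^{-d/(\alpha p)}(1+|x|/t^{1/\alpha})^{-d-\beta}\|f\|_p$. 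Taking its $L^q$ norm (including $q=\infty$) via Lemma~\ref{lem1-Tt}(b) produces $\|T_tf_1\|_q\lesi t^{-d(1/p-1/q)/\alpha}\|f\|_p$, completing part~(a).

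For part~(b) the singular weight is attached to the output variable, so instead I split the target domain $\R^d=\{|x|\le t^{1/\alpha}\}\cup\{|x|>t^{1/\alpha}\}$. On the outer region the weight is $O(1)$, and Young as above bounds this contribution by $t^{-d(1/p-1/q)/\alpha}\|f\|_p$. On the inner region, H\"older gives the uniform pointwise bound $|(K_t\ast|f|)(x)|\le\|K_t\|_{p'}\|f\|_p\lesi t^{-d/(\alpha p)}\|f\|_p$, and Lemma~\ref{lem1-Tt}(a) combined with the condition $\theta q<d$ (precisely $q<d/\theta$) yields $\int_{|x|\le t^{1/\alpha}}(t^{1/\alpha}/|x|)^{\theta q}\,dx\lesi t^{d/\alpha}$. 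Taking the $L^q(\{|x|\le t^{1/\alpha}\})$ norm of $(1+t^{1/\alpha}/|x|)^\theta\cdot(K_t\ast|f|)(x)$ then delivers the required factor $t^{d/(\alpha q)}$, matching $t^{-d(1/p-1/q)/\alpha}\|f\|_p$ after combining with the $t^{-d/(\alpha p)}$ from the uniform bound. There is no substantive obstacle: the only subtlety is tracking the sharp threshold conditions $p>d/(d-\theta)$ in (a) and $q<d/\theta$ in (b), which are exactly what guarantee integrability of $|y|^{-\theta p'}$ (respectively $|x|^{-\theta q}$) near the origin and thereby the validity of the local H\"older estimates.
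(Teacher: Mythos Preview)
Your proof is correct and follows essentially the same approach as the paper: split according to whether the variable carrying the singular weight lies in $B(0,t^{1/\alpha})$ or its complement, handle the far region via the unweighted convolution kernel (you use Young's inequality, the paper compares with $\me{-t(-\Delta)^{\alpha/2}}$, which is equivalent), and treat the near region by H\"older's inequality together with Lemma~\ref{lem1-Tt}(a), exploiting precisely the integrability conditions $\theta p'<d$ in (a) and $\theta q<d$ in (b). The only cosmetic difference is that you derive a pointwise bound on $T_tf_1(x)$ and then take the $L^q$ norm, whereas the paper applies Minkowski's inequality first to move the $L^q_x$ norm inside; both routes yield the same estimate.
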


\begin{proof}
  The proof of (b) is similar to that of (a) with similar modifications as in the proof of Theorem \ref{thm-St}. Hence, we only provide the proof of (a). For $f\in L^p(\Rd)$, we have
  \[
    \begin{aligned}
      \|T_tf\|_{L^q(\Rd)}
      & \leq \left\{\int_{\Rd}\Big[\int_{\Rd}  t^{-d/\alpha}\Big(\f{t^{1/\alpha}+|x-y|}{t^{1/\alpha}}\Big)^{-d-\beta}\Big(1+\f{t^{1/\alpha}}{|y|}\Big)^\theta|f(y)|dy\Big]^qdx\right\}^{1/q} \\
      & \leq \left\{\int_{\Rd}\Big[\int_{B(0,t^{1/\alpha})}  t^{-d/\alpha}\Big(\f{t^{1/\alpha}+|x-y|}{t^{1/\alpha}}\Big)^{-d-\beta}\Big(1+\f{t^{1/\alpha}}{|y|}\Big)^\theta|f(y)|dy\Big]^qdx\right\}^{1/q} \\
      & \quad + \left\{\int_{\Rd}\Big[ \int_{B(0,t^{1/\alpha})^c}  t^{-\frac d\alpha}\Big(\f{t^{\frac1\alpha}+|x-y|}{t^{\frac1\alpha}}\Big)^{-d-\beta}\Big(1+\f{t^{1/\alpha}}{|y|}\Big)^\theta|f(y)|dy\Big]^qdx\right\}^{1/q} \\
      & =: I_1 +I_2.
    \end{aligned}
  \]
  We first consider $I_1$. By Minkowski's inequality,
  \[
    \begin{aligned}
      I_1 & \lesi  \int_{B(0,t^{1/\alpha})^c}  \left\{\int_{\Rd}\Big[t^{-d/\alpha}\Big(\f{t^{1/\alpha}+|x-y|}{t^{1/\alpha}}\Big)^{-d-\beta}\Big]^qdx\right\}^{1/q} \Big(1+\f{t^{1/\alpha}}{|y|}\Big)^\theta|f(y)|dy \\
          & \lesi t^{\f{d}{\alpha q}-\f{d}{\alpha}}\int_{B(0,t^{1/\alpha})^c}\Big(1+\f{t^{1/\alpha}}{|y|}\Big)^\theta|f(y)|dy.
    \end{aligned}
  \]
  Together with H\"older's inequality and Lemma~\ref{lem1-Tt} (since $\theta p'<d$), this estimate yields
  \[
    \begin{aligned}
      I_1 & \lesi t^{\f{d}{\alpha q}-\f{d}{\alpha}}\Big[\int_{B(0,t^{1/\alpha})}   \Big(1+\f{t^{1/\alpha}}{|x|}\Big)^{\theta p'} dy\Big]^{1/p'}\|f\|_{p} \\
          & \lesi t^{\f{d}{\alpha q}-\f{d}{\alpha}}t^{\f{d}{\alpha p'}}\|f\|_p
            \lesi t^{-\f{d}{\alpha}(\f{1}{p}-\f{1}{q})}\|f\|_p.
    \end{aligned}
  \]
  We proceed to consider $I_2$. Here, we have $\Big(1+\f{t^{1/\alpha}}{|y|}\Big)^\theta\sim 1$ whenever $y\in B(0,t^{1/\alpha})^c$. Hence,
  \[
    \begin{aligned}
      I_2 & \lesi \left\{\int_{\Rd}\Big[ \int_{\R^d}  t^{-d/\alpha}\Big(\f{t^{1/\alpha}+|x-y|}{t^{1/\alpha}}\Big)^{-d-\beta}|f(y)|dy\Big]^qdx\right\}^{1/q} \\
          & \sim \|\me{-t(-\Delta)^{\alpha/2}}|f|\|_{q}
            \lesi t^{-\f{d}{\alpha}(\f{1}{p}-\f{1}{q})}\|f\|_p.	
    \end{aligned}
  \]
  This concludes the proof.
\end{proof}

\subsection{Pointwise estimates}
\label{ss:kernelestimates}

In the following, $p_t(x,y)$ and $\tilde p_t(x,y)$ shall denote the kernels of $\me{-t\Lambda_\kappa}$ and $\me{-t\Ln}$, respectively, i.e.,
$$
p_t(x,y) := \me{-t\La}(x,y) \quad \text{and} \quad \tilde p_t(x,y):=\me{-t\Ln}(x,y).
$$
As indicated in Section~\ref{s:introduction}, we only consider $\kappa<\kappa_{\rm c}$, i.e., $\beta>(d+\alpha)/2$, since in that situation, $\me{-tL_\kappa}$ is a holomorphic semigroup in $L^2(\R^d)$ and a $C_0$ semigroup on $L^r(\R^d)$ for all $r\in[2,\infty)$.
As in \cite{Killipetal2018,Franketal2021,Merz2021,BuiDAncona2023,FrankMerz2023,BuiMerz2023}, a paramount role to prove the reversed and generalized Hardy inequalities is played by bounds for $p_t(x,y)$. According to \cite{Kinzebulatovetal2021,KinzebulatovSemenov2023F}, we have, 
\begin{align}
  \label{eq:heatkernel}
  p_t(x,y)
  \sim \tilde p_t(x,y) \cdot \left(1\wedge\frac{|y|}{t^{1/\alpha}}\right)^{\beta-d}
  \quad \text{for all $\alpha\in(1,2)$, $t>0$, and $x,y\in\R^d$}
\end{align}
with
\begin{align}
  \label{eq:heatkernelfree}
  \begin{split}
    \tilde p_t(x,y) & \sim t^{-d/\alpha}\left(\frac{t^{1/\alpha}}{t^{1/\alpha}+|x-y|}\right)^{d+\alpha} \quad \text{for all $\alpha\in(0,2)$, $t>0$, and $x,y\in\R^d$},
  \end{split}
\end{align}
see, e.g., \cite{BlumenthalGetoor1960}. 
When $\alpha=2$, corresponding estimates for $\me{-t\La}$ are unavailable to the best of our knowledge. However, in view of the factorization of the bounds in \eqref{eq:heatkernel} as product of the heat kernel with $\kappa=0$ times a singular weight, we expect
\begin{align}
  \label{eq:heatkernelalpha2}
  \begin{split}
    & t^{-d/2}\left(1\wedge\frac{|y|}{t^{1/2}}\right)^{\beta-d}\me{-c_1|x-y|^2/t}
      \lesssim \me{-t\La}(x,y)
      \lesssim t^{-d/2} \left(1\wedge\frac{|y|}{t^{1/2}}\right)^{\beta-d}\me{-c_2|x-y|^2/t} \\
    & \quad \text{for $\alpha=2$, all $t>0$, and all $x,y\in\R^d$},
  \end{split}
\end{align}
and certain $c_1\geq c_2>0$, depending only on $d$ and $\beta$.

\begin{remark}
  Metafune, Negro, Sobajima, and Spina \cite{Metafuneetal2017,Metafuneetal2018} proved matching upper and lower bounds for the heat kernel associated to
  \begin{align}
    \label{eq:metafuneoperator}
    -\Delta + (a-1)\sum_{j,\ell=1}^d\frac{x_j x_\ell}{|x|^2} \frac{\partial}{\partial x_j}\frac{\partial}{\partial x_\ell} + \kappa|x|^{-2}x\cdot\nabla + \frac{b}{|x|^2} \quad \text{in} \ L^2(\R^d,|x|^{(d-1-\kappa)/a-(d-1)}\,dx).
  \end{align}
  In particular, in this weighted $L^2$-space, the operator can be realized as a self-adjoint operator. Thus, unlike \eqref{eq:heatkernel} or \eqref{eq:heatkernelalpha2}, the corresponding heat kernel bounds are symmetric in $x$ and $y$.
  We refrain from attempting to prove an analog of Theorem~\ref{eqsob} for the operator in \eqref{eq:metafuneoperator} to keep the paper at a reasonable length. Such a proof likely requires a combination of techniques in \cite{Killipetal2018} and the present ones, together with more technical work due to the necessity to work in weighted Lebesgue spaces. Additionally, new ideas will be needed to deal with the case $a\neq1$.
\end{remark}

\begin{remark}
  \label{rem:heatkernelalphaleq1}
  In the critical case $\alpha=1$ and the supercritical case $\alpha\in(0,1)$, Kinzebulatov, Madou, and Semenov \cite[Theorem~1]{Kinzebulatovetal2024} showed $\me{-t\La}(x,y)\lesssim t^{-d/\alpha}(1\wedge|y|/t^{1/\alpha})^{\beta-d}$ for $t\lesssim1$, $\kappa<0$, $d\geq3$, and $\kappa=\Psi(\beta)$ defined in \eqref{eq:defbeta}\footnote{The terminologies "supercritical" and "critical" are motivated by the fact that a gradient perturbation $b\cdot\nabla$ is of the same weight as $\sqrt{-\Delta}$, while if $\alpha<1$, then, formally, $b\cdot\nabla$ dominates $(-\Delta)^{\alpha/2}$.}.
  This bound is insufficient for our purposes since it does not decay at all for $|x-y|\gtrsim t$ and is only stated for $t\lesssim1$. The decay in $|x-y|/t^{1/\alpha}$ is, however, crucial to estimate several integrals discussed below. We do not know if the estimate in \cite{Kinzebulatovetal2024} can be improved in this regard. Moreover, it remains to be explored if their estimate holds for $t\gtrsim1$, and if a corresponding lower bound holds. For Kato-class gradient perturbations, this possibility is ruled out by the observations in \cite[p.~181]{BogdanJakubowski2007}. There, the authors note that the heat kernel of $(-\Delta)^{\alpha/2}+|x|^{-1}x\cdot \nabla$ in $d=1$ equals $\me{-t\Ln}(y-(x+t))$ for all $x,y\in\R$, $t>0$, and $\alpha\in(0,2)$. Thus, taking $x=y$ and considering $t\searrow0$ and $\alpha\in(0,1]$, this kernel is bounded from above and below by $t^{-\alpha}$, whereas $\me{-t\Lambda_0}(t,x,x)$ is bounded from above and below by $t^{-1/\alpha}$. Thus, for $\alpha\in(0,1)$ the heat kernels of the fractional Laplacian with and without Kato-class gradient perturbations are not comparable with each other. This result, although proved for Kato-class gradient perturbations and not for the perturbation $|x|^{-\alpha}x\cdot\nabla$ considered in \cite{Kinzebulatovetal2024} and in the present paper, may be considered as a warning signal: There might not be a lower bound for $\me{-t\La}(x,y)$ which matches the upper bound obtained in \cite[Theorem~1]{Kinzebulatovetal2024} when $\alpha\in(0,1)$.
\end{remark}

\begin{remark}
  The heat kernels of $(-\Delta)^{\alpha/2}+b(x)\cdot\nabla$  
  with less singular, decaying drifts $b:\R^d\to\R^d$ are bounded from above and below by multiples of $\me{-t\Ln}(x,y)$, at least for small times; see \cite{BogdanJakubowski2007} for $\alpha\in(1,2)$ and, for instance, \cite{GarofaloLanconelli1990,Zhang1995,Zhang1997,KimSong2006} for $\alpha=2$.
\end{remark}

\smallskip
Given the above bounds for $\me{-t\La}$ and the estimates in Subsection~\ref{s:averagedestimates}, we now prove further estimates for $\me{-t\La}$. First, by Theorem \ref{thm 2-Tt}, we see that $\me{-t\Lambda_\kappa}$ is $L^p(\R^d)$-bounded whenever $p\in(d/\beta,\infty]$.
We now prove bounds for time derivatives of $\me{-t\La}$. To that end, we use that $\La$ generates, according to \cite[Proposition~8]{Kinzebulatovetal2021}, a holomorphic semigroup in $L^2(\R^d)$, and
prepare the following bounds for the holomorphic extension of $\me{-t\La}$ to the right complex half-plane. In the following, we write $\mathbb{C}_{\theta}:=\{z\in \mathbb{C}: |\arg z|<\theta\}$ for $\theta\in[0,\pi/2]$. Similarly to the proof of \cite[Proposition~8]{Kinzebulatovetal2021}, one can show that there exists $\mu\in(0,\pi/2)$ such that $\|(\Lambda_\kappa-\xi)^{-1}\|_{L^2(\R^d)\to L^2(\R^d)}\lesssim\frac{1}{|\xi|}$ for all $\xi\notin\C_\mu$. Together with the maximal accretivity of $\Lambda_\kappa$ and \cite[Section~8]{McIntosh1986}, this implies that $\Lambda_\kappa$ admits a bounded $H^\infty(\C_\omega\setminus\{0\})$ functional calculus on $L^2(\R^d)$ for any $\mu<\omega<\pi/2$. That is, if $f$ is a bounded holomorphic function on $\C_\omega\setminus\{0\}$, then $\|f(\Lambda_\kappa)\|_{L^2(\R^d)\to L^2(\R^d)}\lesssim_\omega \|f\|_\infty$. We fix $\theta\in(0,\pi/2-\mu)$ throughout the paper.

\begin{proposition}
  \label{heatkernelestimates-halfplane}
  Let $\alpha\in (1,2\wedge(d+2)/2)$, $\beta\in ((d+\alpha)/2,d+\alpha)$, and $\kappa=\Psi(\beta)$ be defined by~\eqref{eq:defbeta}. Then there exist $\epsilon_0>0$, $\epsilon\in (0,1)$, and a constant $C>0$ such that
  \begin{equation}
    \label{boundedp_t(x,y)-complex-alpha < 2}
    |p_z(x,y)| \le C |z|^{-d/\alpha} \Big(\f{|z|^{1/\alpha}}{|z|^{1/\alpha}+|x-y|}\Big)^{(d+\alpha)(1-\epsilon)}\Big(1+\f{|z|^{1/\alpha}}{|y|}\Big)^{d-\beta}
  \end{equation}
  for all $x,y \in \Rd$ and $z\in \mathbb{C}_{\epsilon\theta}$ with $\epsilon(0,\epsilon_0)$, where $p_z(x,y)$ is the kernel associated to the semigroup $\me{-z\Lambda_\kappa}$.
\end{proposition}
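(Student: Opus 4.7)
The plan is to extend the real-time estimate \eqref{eq:heatkernel} to the sector $\mathbb{C}_{\epsilon\pi/4}$ by a Phragm\'en--Lindel\"of argument, exploiting that $\La$ generates a bounded holomorphic semigroup on $L^2(\R^d)$ in some sector $\mathbb{C}_{\theta_0}$ with $\theta_0>\pi/4$ (as ensured by \cite[Proposition~8]{Kinzebulatovetal2021}), so that $z\mapsto p_z(x,y)$ is holomorphic on $\mathbb{C}_{\theta_0}$ for almost every $x,y\in\R^d$.

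First I would establish the complex-time on-diagonal bound $|p_z(x,y)|\lesssim_\epsilon|z|^{-d/\alpha}$, uniformly for $z\in\mathbb{C}_{\theta_0-\delta}$ and all $x,y\in\R^d$. This is a Davies-type transfer of the real-time ultracontractivity $\|\me{-t\La}\|_{L^1\to L^\infty}\lesssim t^{-d/\alpha}$, which is a direct consequence of \eqref{eq:heatkernel}. Concretely, factor $\me{-z\La}=\me{-z/2\,\La}\circ\me{-z/2\,\La}$ and apply the Phragm\'en--Lindel\"of principle to the operator-valued holomorphic family $z\mapsto z^{d/(2\alpha)}\me{-z\La}$, viewed as an $L^2\to L^\infty$ map, and analogously to its adjoint (whose semigroup obeys kernel bounds derived from \eqref{eq:heatkernel} by swapping the roles of $x$ and $y$), to control the two factors separately. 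Since the kernel of a composition is bounded by the product of these norms in the respective variables, this yields the claimed on-diagonal bound.

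Next, to obtain the polynomial off-diagonal decay with the $\epsilon$-loss, I fix $x,y\in\R^d$ with $R:=|x-y|>0$, $y\neq 0$, and introduce the auxiliary function
$$
\Phi_{x,y}(z) := z^{d/\alpha}\,\bigl(1+R\,z^{-1/\alpha}\bigr)^{(d+\alpha)(1-\epsilon)}\,p_z(x,y),
$$
holomorphic on $\mathbb{C}_{\theta_0-\delta}$ with principal branches of $z^{1/\alpha}$ and of the outer power. On the positive real axis, \eqref{eq:heatkernel} immediately yields $|\Phi_{x,y}(t)|\lesssim(1+t^{1/\alpha}/|y|)^{d-\beta}$, while on the bounding rays $\arg z=\pm(\theta_0-\delta)$, Step~1 and the triangle-inequality estimate $|1+Rz^{-1/\alpha}|\lesssim 1+R|z|^{-1/\alpha}$ give $|\Phi_{x,y}(z)|\lesssim(1+R|z|^{-1/\alpha})^{(d+\alpha)(1-\epsilon)}$. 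Conformally mapping the sector to a horizontal strip and applying a Hadamard three-lines argument in the sub-strip corresponding to $\mathbb{C}_{\epsilon\pi/4}$ then produces \eqref{boundedp_t(x,y)-complex-alpha < 2} upon rearrangement.

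The main obstacle is the $y$-dependent weight $(1+|z|^{1/\alpha}/|y|)^{d-\beta}$, which is forced on us by the non-symmetry of $\La$ and is not holomorphic in $z$, so it cannot simply be absorbed into $\Phi_{x,y}$ without destroying analyticity. I would handle this by splitting the sector according to $|y|\gtrsim|z|^{1/\alpha}$—in which case the weight is $\sim 1$ and the Phragm\'en--Lindel\"of step above applies directly—versus the singular regime $|y|\lesssim|z|^{1/\alpha}$. In the latter, I would work on a refined sub-sector in which $\arg(z^{1/\alpha}/|y|)$ is kept in a half-plane so that $(1+z^{1/\alpha}/|y|)^{\beta-d}$ is a bona fide holomorphic compensator, or, failing that, apply the three-lines principle radially at fixed $\arg z$ and exploit the monotonicity of the weight in $|z|$ along each ray together with uniformity in $\arg z$ of the constants from Step~1. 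The $(1-\epsilon)$-loss in the decay exponent is the standard Phragm\'en--Lindel\"of cost for transferring polynomial decay into the strict interior of the sector of holomorphy.
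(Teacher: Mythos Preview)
Your Step~1 contains a genuine gap when $\beta\le d$ (which occupies a large part of the stated range $\beta\in((d+\alpha)/2,d+\alpha)$, and is in fact the most interesting regime). The claim that the real-time ultracontractivity $\|\me{-t\La}\|_{L^1\to L^\infty}\lesssim t^{-d/\alpha}$ ``is a direct consequence of \eqref{eq:heatkernel}'' is false in that regime: by \eqref{eq:heatkernel} the kernel behaves like $t^{-d/\alpha}(|y|/t^{1/\alpha})^{\beta-d}$ for $|y|\ll t^{1/\alpha}$, which \emph{blows up} as $|y|\to 0$ when $\beta<d$. In particular $\sup_{x,y}|p_t(x,y)|=\infty$, so no unweighted pointwise on-diagonal bound $|p_z(x,y)|\lesssim|z|^{-d/\alpha}$ can hold, even for real $z$. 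Equivalently, $\me{-t\La}$ is not $L^1\to L^2$ bounded (this is precisely why Theorem~\ref{thm 2-Tt}(a) requires $p>d/(d-\theta)=d/\beta$). Your subsequent splitting into $|y|\gtrsim|z|^{1/\alpha}$ versus $|y|\lesssim|z|^{1/\alpha}$ does not rescue this, because the Phragm\'en--Lindel\"of argument in Step~1 is applied with $z$ ranging over a full sector while $y$ is fixed, so the constraint $|y|\gtrsim|z|^{1/\alpha}$ cuts a $z$-dependent region out of the sector and destroys the setting for the three-lines theorem; the alternative you sketch (``apply the three-lines principle radially at fixed $\arg z$'') is not a Phragm\'en--Lindel\"of argument at all, since there is no analyticity along a single ray.

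The paper's proof avoids this obstruction by building the $y$-weight into the function spaces from the outset. One proves the \emph{weighted} on-diagonal bound
\[
|p_z(x,y)|\,\Bigl(1+\tfrac{|z|^{1/\alpha}}{|y|}\Bigr)^{-(d-\beta)}\ \lesssim\ |z|^{-d/\alpha},\qquad z\in\mathbb{C}_{\pi/4},
\]
directly, by interpreting it as the norm estimate $\|\me{-z\La}\|_{L^1_{w_z^{-1}}\to L^\infty}\lesssim|z|^{-d/\alpha}$ with $w_z(y)=(1+|z|^{1/\alpha}/|y|)^{-(d-\beta)}$. Writing $z=2t+is$ with $t\sim|z|$, this factors as $\me{-t\La}\circ\me{-is\La}\circ\me{-t\La}$, and the point is that the map $\me{-t\La}:L^1_{w_z^{-1}}\to L^2$ \emph{is} bounded with norm $\lesssim t^{-d/(2\alpha)}$, precisely because the weight cancels the kernel singularity at $y=0$. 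The weight here is a fixed (non-holomorphic) function of $|z|$ and $y$, so no analyticity is needed for it; only the off-diagonal Gaussian-to-polynomial upgrade is obtained afterward by the black-box Phragm\'en--Lindel\"of lemma of \cite{DuongRobinson1996} or \cite{Merz2022}, applied to the already-weighted on-diagonal bound.
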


\begin{proof} 
  We use Davies' method \cite{Davies1990}, which relies on a Phragm\'en--Lindel\"of argument. For $z\in\mathbb{C}$, set
  \[
    w_z(x) = \Big(1+\f{z^{1/\alpha}}{|x|}\Big)^{-(d-\beta)}, \ \ \ x\in  \Rd.
  \]
  We first claim that there exists $C>0$ such that
  \begin{equation}
    \label{boundedp_t(x,y)-complex1'}
    \left|p_z(x,y)w_z(y)\right|\le \f{C}{|z|^{d/\alpha}}
  \end{equation}
  for all $x,y\in \Rd$ and $z\in\mathbb{C}_{\pi/4}$.
  To that end, let $w$ be a nonnegative locally integrable function on $\Rd$. We define  
  \[
    L_w^1(\Rd)=\Big\{f:  |f|_{L_w^1(\Rd)}:=\int_{\Rd}|f(x)|w(x)dx<\vc\Big\}.
  \]  
  Hence, the inequality \eqref{boundedp_t(x,y)-complex1'} is equivalent to
  $$
  \|\me{-z\Lambda_\kappa}\|_{L^1_{w_z^{-1}}(\Rd)\rightarrow L^\vc(\Rd)}\le \f{C}{|z|^{d/\alpha}}.
  $$
  For $z\in\mathbb{C}_{\epsilon\theta}$ we can write $z=3t+is$ for some $t\ge 0$ and $s\in \mathbb{R}$ such that $t\sim |z|$. Then, 
  \begin{align*}
    & \|\me{-z\Lambda_\kappa}\|_{L^1_{w_z^{-1}}(\Rd)\rightarrow L^\vc(\Rd)} \\
    & \quad \le \|\me{-t\Lambda_\kappa}\|_{L^2(\Rd)\rightarrow L^\vc(\Rd)}\|\me{-(t+is)\Lambda_\kappa}\|_{L^2(\Rd)\rightarrow L^2(\Rd)}\|\me{-t\Lambda_\kappa}\|_{L^1_{{w^{-1}_z}}(\Rd)\rightarrow L^2(\Rd)}.
  \end{align*}
  
  This, together with Theorem~\ref{thm 2-Tt} and the fact $\|\me{-is\Lambda_\kappa}\|_{L^2(\Rd)\rightarrow L^2(\Rd)}\lesi 1$ (since $\Lambda_\kappa$ has a bounded $H^\infty(\C_\theta\setminus\{0\})$ functional calculus on $L^2(\Rd)$ for $\beta>(d+\alpha)/2$), implies
  \begin{equation}
    \label{eq- L1vc norm}
    \|\me{-z\Lambda_\kappa}\|_{L^1_{w_z^{-1}}(\Rd)\rightarrow L^\vc(\Rd)}
    \lesi t^{-\f{d}{2\alpha}}\|\me{-t\Lambda_\kappa}\|_{L^1_{{w^{-1}_z}}(\Rd)\rightarrow L^2(\Rd)}.
  \end{equation}
  It remains to estimate $\|\me{-t\Lambda_\kappa}\|_{L^1_{{w^{-1}_z}}(\Rd)\rightarrow L^2(\Rd)}$. For $f\in L^1_{{w^{-1}_z}}(\Rd)$, we have, by \eqref{eq:heatkernel},
  $$
  \begin{aligned}
    \|\me{-t\Lambda_\kappa}f\|_{L^2(\Rd)}
    & \lesi \Big[\int_{\Rd}\Big|\int_{\Rd}   {t^{-d/\alpha}  \Big(\f{t^{1/\alpha}+|x-y|}{t^{1/\alpha}}\Big)^{-d-\alpha}w_z(y)^{-1}|f(y)|}dy\Big|^2dx\Big]^{1/2}.
  \end{aligned}
  $$
  By Minkowski's inequality,
  $$
  \begin{aligned}
    \|\me{-t\Lambda_\kappa}f\|_{L^2(\Rd)}
    & \lesi \int_{\mathbb{R}^d}\Big[\int_{\Rd}\Big| t^{-d/\alpha}   \Big(\f{t^{1/\alpha}+|x-y|}{t^{1/\alpha}}\Big)^{-d-\alpha}\Big|^2dx\Big]^{1/2} w_z(y)^{-1}|f(y)|dy \\
    & \lesi  {\sup_{y\in \Rd}}\Big[\int_{\mathbb{R}^d}\Big| t^{-d/\alpha}\Big(\f{t^{1/\alpha}+|x-y|}{t^{1/\alpha}}\Big)^{-d-\alpha}\Big|^2dx\Big]^{1/2}\|f\|_{L^1_{w_z^{-1}}}.
  \end{aligned}
  $$
  Using Lemma \ref{lem1-Tt}, we get
  \[
    \|\me{-t\Lambda_\kappa}f\|_{L^2(\Rd)}\lesi t^{-\f{d}{2\alpha}}\|f\|_{L^1_{w_z^{-1}}},
  \]
  which yields
  \[
    \|\me{-t\Lambda_\kappa}\|_{L^1_{{w^{-1}_z}}(\Rd)\rightarrow L^2(\Rd)}\lesi t^{-\f{d}{2\alpha}}.
  \]
  Inserting this into \eqref{eq- L1vc norm} gives
  $$
  \|\me{-z\Lambda_\kappa}\|_{L^1_{w_z^{-1}}(\Rd)\rightarrow L^\vc(\Rd)}\le t^{-\f{d}{\alpha}} \sim \f{1}{|z|^{d/\alpha}},
  $$
  which implies \eqref{boundedp_t(x,y)-complex1'}, i.e.,
  \[
    |p_z(x,y)|\lesi \f{1}{|z|^{d/\alpha}}\Big(1+\f{|z|^{1/\alpha}}{|y|}\Big)^{d-\beta}
  \]
  for all $x,y\in \Rd$ and $z\in\mathbb{C}_{\epsilon\theta}$. By the Phragm\'en--Lindel\"of argument in \cite[Proposition~3.3]{DuongRobinson1996} or \cite[Theorem~2.1]{Merz2022}, we get \eqref{boundedp_t(x,y)-complex-alpha < 2}.

  This completes our proof.
\end{proof}

For each $k\in \mathbb N$, we denote by $p_{t,k}(x,y)$ the kernel of $\Lambda_\kappa^k \me{-t\Lambda_\kappa}$. As a consequence of Proposition~\ref{heatkernelestimates-halfplane} and Cauchy's formula, we obtain the following result.

\begin{proposition}
  \label{thm-ptk}
  Let $\alpha\in (1,2\wedge(d+2)/2)$, $\beta\in ((d+\alpha)/2,d+\alpha)$, and $\kappa=\Psi(\beta)$ be defined by~\eqref{eq:defbeta}. Then for any $\epsilon\in(0,\epsilon_0)$ where $\epsilon_0$ is as in Proposition~\ref{heatkernelestimates-halfplane}, and $k\in \mathbb N$,
  \begin{equation*}
    |p_{t,k}(x,y)|\lesssim_{k,\epsilon} t^{-(k+d/\alpha)}\Big(\f{t^{1/\alpha}+|x-y|}{t^{1/\alpha}}\Big)^{-d-\alpha+\epsilon}\Big(1+\f{t^{1/\alpha}}{|y|}\Big)^{d-\beta}
  \end{equation*}
  for all $x,y \in \Rd$ and $t>0$.
\end{proposition}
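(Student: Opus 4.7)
The plan is to deduce the bound from the complex-time heat-kernel estimate of Proposition~\ref{heatkernelestimates-halfplane} via Cauchy's integral formula, applied on a circle of radius comparable to $t$ that fits inside a slightly narrower sector than $\mathbb{C}_{\pi/4}$. Since $\Lambda_\kappa$ generates a holomorphic semigroup in $L^2(\R^d)$ for $\kappa<\kappa_{\rm c}$, iterating the generator identity $\partial_z\me{-z\Lambda_\kappa}=-\Lambda_\kappa\me{-z\Lambda_\kappa}$ gives $\Lambda_\kappa^k\me{-z\Lambda_\kappa}=(-1)^k\partial_z^k\me{-z\Lambda_\kappa}$ for all $z\in\mathbb{C}_{\pi/4}$ and $k\in\N$, which at the level of integral kernels reads $p_{z,k}(x,y)=(-1)^k\partial_z^k p_z(x,y)$; for each fixed $x,y$, the map $z\mapsto p_z(x,y)$ is holomorphic on $\mathbb{C}_{\pi/4}$.

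Given $t>0$, $k\in\N$, and $\epsilon>0$, one may assume without loss of generality $\epsilon\in(0,d+\alpha)$, since otherwise the spatial factor in the target bound is at most $1$ and the estimate is trivial. Set $\tilde\epsilon:=\epsilon/(d+\alpha)\in(0,1)$ and $r:=t\sin(\tilde\epsilon\pi/8)$, and let $\gamma_t$ denote the positively oriented circle of radius $r$ centered at $t$. By construction $\gamma_t\subset\mathbb{C}_{\tilde\epsilon\pi/4}$, and $|w|\sim t$ uniformly for $w\in\gamma_t$; in particular $|w|^{1/\alpha}\sim t^{1/\alpha}$.

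Cauchy's integral formula then yields
\begin{align*}
p_{t,k}(x,y) = \frac{(-1)^k k!}{2\pi i}\oint_{\gamma_t}\frac{p_w(x,y)}{(w-t)^{k+1}}\,dw,
\end{align*}
so that the standard contour bound, combined with \eqref{boundedp_t(x,y)-complex-alpha < 2} applied with $\tilde\epsilon$ in place of $\epsilon$ and with $r\sim t$, gives
\begin{align*}
|p_{t,k}(x,y)|
\lesssim_k \frac{1}{r^k}\sup_{w\in\gamma_t}|p_w(x,y)|
\lesssim_{k,\epsilon} t^{-k-d/\alpha}\Big(\frac{t^{1/\alpha}}{t^{1/\alpha}+|x-y|}\Big)^{(d+\alpha)(1-\tilde\epsilon)}\Big(1+\frac{t^{1/\alpha}}{|y|}\Big)^{d-\beta}.
\end{align*}
Since $(d+\alpha)(1-\tilde\epsilon)=d+\alpha-\epsilon$, this is precisely the claim.

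No serious obstacle is anticipated. The only minor subtleties are to pick $\tilde\epsilon$ small enough to absorb the $(d+\alpha)$-factor in the exponent after the rescaling $\epsilon\mapsto(d+\alpha)\tilde\epsilon$, and to verify that the two spatial factors in \eqref{boundedp_t(x,y)-complex-alpha < 2} evaluated on $\gamma_t$ remain uniformly comparable to the ones at $w=t$, which is immediate from $|w|\sim t$ and hence $|w|^{1/\alpha}\sim t^{1/\alpha}$.
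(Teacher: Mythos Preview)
Your proof is correct and follows essentially the same route as the paper: Cauchy's formula on a circle $\{|w-t|=\eta t\}\subset\mathbb{C}_{\tilde\epsilon\pi/4}$ with $\tilde\epsilon=\epsilon/(d+\alpha)$, followed by the complex-time bound of Proposition~\ref{heatkernelestimates-halfplane} and the observation $|w|\sim t$ on the contour. Your write-up is in fact slightly more careful, spelling out the choice of radius and the comparability of the weight factors on the circle.
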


\begin{proof}
  Applying Cauchy's formula, we get for every $t>0$ and $k\in \mathbb N$,
  \[
    \Lambda_\kappa^k\me{-t\Lambda_\kappa} = \f{(-1)^kk!}{2\pi i}\int_{|\xi-t|=\eta t} \me{-\xi \La}\f{d\xi}{(\xi-t)^{k+1}},
  \]
  where $\eta>0$ is small enough so that $\{\xi: |\xi-t|=\eta t\}\subset \mathbb C_{\tilde\epsilon\theta}$ with $\tilde \epsilon = \f{\epsilon}{d+\alpha}$, and the integral does not depend on the choice of $\eta$.
  We now apply Proposition \ref{heatkernelestimates-halfplane} and the fact that $|\xi|\sim |\xi-t|\sim t$ to deduce 
  \[
    \begin{aligned}
      |p_{t,k}(x,y)|
      & \le C_{k,\tilde\epsilon} t^{-(k+d/\alpha)} \Big(\f{t^{1/\alpha}+|x-y|}{t^{1/\alpha}}\Big)^{-(d+\alpha)(1-\tilde\epsilon)}\Big(1+\f{t^{1/\alpha}}{|y|}\Big)^{d-\beta}\\
      & \le C_{k,\epsilon} t^{-(k+d/\alpha)} \Big(\f{t^{1/\alpha}+|x-y|}{t^{1/\alpha}}\Big)^{-(d+\alpha-\epsilon)}\Big(1+\f{t^{1/\alpha}}{|y|}\Big)^{d-\beta}
    \end{aligned}
  \]
  for all $x,y\in \Rd$ and $t>0$.
\end{proof}

We also need estimates for $|\nabla_x\partial_t\me{-t\Lambda_0}(x,y)|$. To that end, we first record the following bounds for spatial derivatives, essentially contained in \cite[Lemma~5]{BogdanJakubowski2007}; see also \cite[(10), Lemma~1]{Kinzebulatovetal2021} for further estimates.

\begin{lemma}[{\cite{BogdanJakubowski2007,Kinzebulatovetal2021}}]
  \label{derivativeheatkernel}
  Let $d\in\N$ and $\alpha\in(0,2)$. Then, for all $t>0$ and $x,y\in\R^d$,
  \begin{align}
    \label{eq:derivativeheatkernel1}
    \begin{split}
      |\nabla_x \me{-t\Lambda_0}(x,y)|
      & \sim \frac{|x-y|}{t^{1/\alpha}} \cdot t^{-\frac{d+1}{\alpha}} \left(\frac{t^{1/\alpha}}{t^{1/\alpha}+|x-y|}\right)^{d+2+\alpha} \\
      & \lesssim t^{-\frac{d+1}{\alpha}} \left(\frac{t^{1/\alpha}}{t^{1/\alpha}+|x-y|}\right)^{d+1+\alpha}.
    \end{split}
  \end{align}
\end{lemma}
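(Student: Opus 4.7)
This result concerns only the fractional heat kernel (no drift), so I proceed by reducing to the unit-time density via scaling and then appealing to the classical subordination argument. By translation invariance, $\me{-t\Lambda_0}(x,y) = p_t(x-y)$, and by the self-similarity of the symmetric $\alpha$-stable semigroup, $p_t(z) = t^{-d/\alpha}\, p_1(t^{-1/\alpha} z)$. Differentiating in $x$,
\[
\nabla_x \me{-t\Lambda_0}(x,y) = t^{-(d+1)/\alpha}\, (\nabla p_1)\bigl((x-y)/t^{1/\alpha}\bigr),
\]
so it suffices to establish the dimensionless two-sided estimate
\[
|\nabla p_1(z)| \sim \frac{|z|}{(1+|z|)^{d+2+\alpha}} \quad \text{for all } z \in \R^d.
\]

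For this, I would use Bochner's subordination representation of the $\alpha$-stable density in terms of Gaussians,
\[
p_1(z) = \int_0^\infty (4\pi s)^{-d/2}\, \me{-|z|^2/(4s)}\, \eta(s)\, ds,
\]
where $\eta$ is the density at time $1$ of the one-sided $(\alpha/2)$-stable subordinator, whose Laplace transform is $\me{-u^{\alpha/2}}$ and which satisfies the standard two-sided bounds $\eta(s)\sim s^{-1-\alpha/2}$ as $s\to\infty$ together with stretched-exponential decay as $s\to 0^+$. Differentiating under the integral pulls out a factor $z$,
\[
\nabla p_1(z) = -\frac{z}{2}\int_0^\infty (4\pi s)^{-d/2}\, s^{-1}\, \me{-|z|^2/(4s)}\, \eta(s)\, ds,
\]
so the numerator $|z|$ in the desired bound appears naturally and it remains to show that the integral is comparable to $(1+|z|)^{-(d+2+\alpha)}$. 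I would split into the regimes $|z|\lesssim 1$ and $|z|\gtrsim 1$. For $|z|\lesssim 1$ the Gaussian factor is essentially harmless and the integral is bounded above and below by absolute constants, using only the integrability of $s\mapsto s^{-d/2-1}\eta(s)$ away from the origin. For $|z|\gtrsim 1$ I would apply Laplace's method with the change of variables $s = |z|^2 u$, together with the large-$s$ asymptotics $\eta(s)\sim s^{-1-\alpha/2}$, to reduce to a $u$-integral bounded above and below by $|z|$-independent positive constants times $|z|^{-(d+2+\alpha)}$. Combining the two regimes yields the asserted two-sided bound on $|\nabla p_1|$.

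The second (upper) line of~\eqref{eq:derivativeheatkernel1} is then immediate from the elementary inequality $|z|/(1+|z|)\le 1$. The main technical obstacle is the lower bound in the Laplace-type analysis for $|z|\gtrsim 1$, where one must ensure that the dominant contribution to the integral, localized at $s\sim|z|^2$, is captured by the lower as well as the upper pointwise bound on $\eta$, ruling out accidental cancellations between the various regimes of the subordinator. At this step I would rely on the sharp quantitative bounds on the subordinator density and on $\nabla p_t$ already developed in \cite{BogdanJakubowski2007}, of which the present statement is essentially a corollary.
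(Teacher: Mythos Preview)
The paper does not prove this lemma at all; it simply records the bound with a citation to \cite[Lemma~5]{BogdanJakubowski2007} and \cite{Kinzebulatovetal2021}. Your sketch via scaling plus Bochner subordination is exactly the standard route taken in those references, and the outline is correct: the factor $|z|$ from differentiating the Gaussian gives the numerator, and the Laplace-type analysis of the subordinator integral (with $\eta(s)\sim s^{-1-\alpha/2}$ at infinity dominating when $|z|\gtrsim 1$) yields the $(1+|z|)^{-(d+2+\alpha)}$ decay. There is nothing to compare---you have supplied a proof where the paper just gives a pointer.
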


\begin{lemma}
  \label{thm-spacetimederivative}
  Let $d\in\N$ and $\alpha\in (0,2)$. Then, for any $\epsilon\in(0,\epsilon_0)$,
  \begin{align}
    \label{eq:spacetimederivative}
    |\nabla_x\partial_t\me{-t\Lambda_0}(x,y)|
    \lesssim_\epsilon t^{-\f{d+\alpha+1}{\alpha}}\Big(\f{t^{1/\alpha}}{t^{1/\alpha}+|x-y|}\Big)^{d+\alpha+1-\epsilon}.
  \end{align}
\end{lemma}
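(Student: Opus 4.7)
My plan is to derive \eqref{eq:spacetimederivative} via Cauchy's integral formula applied to the holomorphic extension of $\nabla_x\me{-t\Lambda_0}(x,y)$ in the time variable. Since $\Lambda_0=(-\Delta)^{\alpha/2}$ is translation-invariant and generates a holomorphic semigroup on $L^2(\R^d)$, the map $z\mapsto\me{-z\Lambda_0}$ extends holomorphically to the right half-plane, and pointwise so does $z\mapsto\nabla_x\me{-z\Lambda_0}(x,y)$. Using $\nabla_x\partial_t = \partial_t\nabla_x$, the task reduces to a time-derivative estimate for the gradient kernel.

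First I would prove a complex-time analog of Lemma~\ref{derivativeheatkernel}: for every $\epsilon>0$ there is $\theta=\theta(\epsilon)\in(0,\pi/2)$ such that
\begin{equation*}
  |\nabla_x\me{-z\Lambda_0}(x,y)|
  \lesssim_\epsilon |z|^{-(d+1)/\alpha}\Big(\f{|z|^{1/\alpha}}{|z|^{1/\alpha}+|x-y|}\Big)^{d+1+\alpha-\epsilon}
  \qquad \text{for all } z\in\mathbb{C}_\theta,\ x,y\in\R^d.
\end{equation*}
The strategy mirrors the proof of Proposition~\ref{heatkernelestimates-halfplane}. The uniform bound $|\nabla_x\me{-z\Lambda_0}(x,y)|\lesssim|z|^{-(d+1)/\alpha}$ in the sector is immediate from Fourier inversion and the estimate $\int|\xi|\me{-\re(z)|\xi|^\alpha}d\xi\lesssim(\re z)^{-(d+1)/\alpha}\sim|z|^{-(d+1)/\alpha}$, while Lemma~\ref{derivativeheatkernel} supplies the target polynomial decay on the positive real axis. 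The Phragm\'en--Lindel\"of argument of \cite{DuongRobinson1996} then transfers the real-axis decay into the sector, at the cost of an arbitrarily small $\epsilon$-loss in the decay exponent.

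Next I would apply Cauchy's integral formula on a circle $|\zeta-t|=\eta t$ with $\eta>0$ small enough that the circle lies inside $\mathbb{C}_\theta$. Holomorphy yields
\begin{equation*}
  \nabla_x\partial_t\me{-t\Lambda_0}(x,y) = \f{1}{2\pi i}\oint_{|\zeta-t|=\eta t}\nabla_x\me{-\zeta\Lambda_0}(x,y)\f{d\zeta}{(\zeta-t)^2},
\end{equation*}
and inserting the bound from the first step with $|\zeta|\sim t$ on the contour gives
\begin{equation*}
  |\nabla_x\partial_t\me{-t\Lambda_0}(x,y)|
  \lesssim (\eta t)^{-1}\cdot t^{-(d+1)/\alpha}\Big(\f{t^{1/\alpha}}{t^{1/\alpha}+|x-y|}\Big)^{d+1+\alpha-\epsilon}
  \sim t^{-(d+\alpha+1)/\alpha}\Big(\f{t^{1/\alpha}}{t^{1/\alpha}+|x-y|}\Big)^{d+\alpha+1-\epsilon},
\end{equation*}
which is \eqref{eq:spacetimederivative}.

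The main obstacle is the Phragm\'en--Lindel\"of transfer for the signed, vector-valued kernel $\nabla_x p_z$. Proposition~\ref{heatkernelestimates-halfplane} exploits the nonnegativity of $p_z$ through a weighted $L^1_{w^{-1}}\to L^\infty$ bound before interpolating; no such direct scalar estimate is available for the gradient. I would circumvent this by applying Phragm\'en--Lindel\"of componentwise to each scalar entry of $\nabla_x p_z$ (or to its Euclidean norm), using the Fourier-side sup bound in place of Davies' exponential weight. The intrinsic $\epsilon$-loss is the same as in Proposition~\ref{heatkernelestimates-halfplane}, arising from the non-smoothness of the symbol $\xi\me{-z|\xi|^\alpha}$ at $\xi=0$.
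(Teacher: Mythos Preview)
Your approach is correct and essentially identical to the paper's: the paper's proof simply notes $\nabla_x\partial_t=\partial_t\nabla_x$ and says the argument is the same as Proposition~\ref{thm-ptk} (Cauchy's formula over a small circle) with Lemma~\ref{derivativeheatkernel} supplying the real-axis input, which is exactly the Phragm\'en--Lindel\"of-then-Cauchy scheme you describe. Your concern about sign is not an actual obstacle---the Phragm\'en--Lindel\"of transfer in \cite{DuongRobinson1996,Merz2022} applies to each holomorphic component $z\mapsto\partial_{x_j}\me{-z\Lambda_0}(x,y)$ without any positivity assumption.
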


\begin{proof}
  Note that $\nabla_x\partial_t\me{-t\Lambda_0} = \partial_t\nabla_x\me{-t\Lambda_0}$. Hence, the proof is similar to that of Proposition~\ref{thm-ptk} by using Lemma~\ref{derivativeheatkernel} and hence we omit the details.
\end{proof}

\begin{lemma}
  \label{lem-gradient of heat kernel 2}
  Let $d\in\N$, $\alpha\in(0,2)$, and $0\le \gamma<(d+1)/\alpha$. Then,
  \begin{align}
    \label{eq:derivativerieszheat}
    |\nabla \Lambda_0^{-\gamma} \me{-t\Lambda_0}(x,y)|
    \lesi_\gamma t^{-\f{d+1-\gamma \alpha}{\alpha}}\Big(\f{t^{1/\alpha}}{t^{1/\alpha}+|x-y|}\Big)^{d+1-\alpha\gamma}.
  \end{align}
  Moreover, for $\alpha\in(1,2\wedge(d+2)/2)$, $0\leq\gamma<d/\alpha$, $\beta\in((d+\alpha)/2,d+\alpha)$, and $\kappa=\Psi(\beta)$ defined by~\eqref{eq:defbeta}, we have
  \begin{align}
    \label{eq:rieszheatkappa}
    |\La^{-\gamma}\me{-t\La}(x,y)| 
    \lesssim_\gamma t^{-\frac{d-\gamma\alpha}{\alpha}}\left(\frac{t^{1/\alpha}}{t^{1/\alpha}+|x-y|}\right)^{d-\alpha\gamma} \left(1\wedge\frac{|y|}{t^{1/\alpha}+|x-y|}\right)^{\beta-d}.
    \end{align}
\end{lemma}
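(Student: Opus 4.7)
My strategy is to use the Balakrishnan subordination identity
\[
  A^{-\gamma} = \f{1}{\Gamma(\gamma)} \int_0^\infty s^{\gamma-1}\me{-sA}\,ds \qquad (\gamma>0),
\]
applied to $A=\Ln$ and $A=\La$. Since $\Ln$ is self-adjoint nonnegative and $\La$ generates a holomorphic semigroup in $L^2(\Rd)$ (by Proposition~\ref{heatkernelestimates-halfplane} and the preceding discussion), the identity holds on suitable dense subspaces; combined with the kernel bounds \eqref{eq:heatkernelfree} and \eqref{eq:heatkernel}, it yields the pointwise kernel identities
\begin{align*}
  \Ln^{-\gamma}\me{-t\Ln}(x,y) & = \f{1}{\Gamma(\gamma)}\int_0^\infty s^{\gamma-1}\me{-(s+t)\Ln}(x,y)\,ds, \\
  \La^{-\gamma}\me{-t\La}(x,y) & = \f{1}{\Gamma(\gamma)}\int_0^\infty s^{\gamma-1}\me{-(s+t)\La}(x,y)\,ds,
\end{align*}
with absolute convergence near $s=0$ ensured by $\gamma>0$ and at $s=\infty$ by the upper constraints on $\gamma$. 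The $\gamma=0$ cases of \eqref{eq:derivativerieszheat} and \eqref{eq:rieszheatkappa} reduce to Lemma~\ref{derivativeheatkernel} and \eqref{eq:heatkernel}, so only $\gamma>0$ needs work.

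For \eqref{eq:derivativerieszheat} I would differentiate the first identity in $x$ under the integral sign, which is legitimized by the gradient estimate in Lemma~\ref{derivativeheatkernel}, and then reduce the claim, after collecting powers, to the purely analytic inequality
\[
  \int_0^\infty s^{\gamma-1}\,\f{s+t}{((s+t)^{1/\alpha}+|x-y|)^{d+1+\alpha}}\,ds
  \lesi \f{1}{(t^{1/\alpha}+|x-y|)^{d+1-\alpha\gamma}}.
\]
Splitting the integration at $s=t$ and, if needed, within each piece at $s=|x-y|^\alpha$, reduces the estimate to elementary power integrals of the form $\int s^a\,ds$ on bounded and unbounded intervals; the hypothesis $\gamma<(d+1)/\alpha$ is exactly what is needed for convergence of the tail at $s=\infty$. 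The resulting bound matches the target in both regimes $|x-y|\lesi t^{1/\alpha}$ and $|x-y|\gg t^{1/\alpha}$.

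For \eqref{eq:rieszheatkappa} the same strategy, now combined with \eqref{eq:heatkernel}, reduces matters to
\begin{align*}
  & \int_0^\infty s^{\gamma-1}\,\f{s+t}{((s+t)^{1/\alpha}+|x-y|)^{d+\alpha}}\left(1\wedge\f{|y|}{(s+t)^{1/\alpha}}\right)^{\beta-d}ds \\
  & \qquad \lesi \f{1}{(t^{1/\alpha}+|x-y|)^{d-\alpha\gamma}}\left(1\wedge\f{|y|}{t^{1/\alpha}+|x-y|}\right)^{\beta-d}.
\end{align*}
The principal difficulty is the $s$-dependent weight $(1\wedge|y|/(s+t)^{1/\alpha})^{\beta-d}$, whose exponent $\beta-d\in((\alpha-d)/2,\alpha)$ is sign-indefinite. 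I would distinguish the two regimes \emph{(i)} $|y|\ge t^{1/\alpha}+|x-y|$, where the target weight is comparable to $1$, and \emph{(ii)} $|y|<t^{1/\alpha}+|x-y|$, where the target weight equals $(|y|/(t^{1/\alpha}+|x-y|))^{\beta-d}$. Within each regime I would further split the $s$-integral at the transition $s+t=|y|^\alpha$ of the inner weight (from trivial to $(|y|/(s+t)^{1/\alpha})^{\beta-d}$) and at $s=t$, each piece reducing to an elementary power integral; the hypotheses $\gamma<d/\alpha$ and $\beta>(d+\alpha)/2$ supply the required integrability. The main obstacle lies in this case analysis; most delicate is the tail $s\gg|y|^\alpha$ in regime \emph{(ii)}, where the heat kernel's decay is only $s^{-\beta/\alpha}$ and must be absorbed into the outer gain from $(t^{1/\alpha}+|x-y|)^{-(d-\alpha\gamma)}$ to close the estimate.
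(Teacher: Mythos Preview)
Your approach is essentially the same as the paper's: both use the subordination formula $A^{-\gamma}=c_\gamma\int_0^\infty s^{\gamma-1}\me{-sA}\,ds$, push the gradient (for \eqref{eq:derivativerieszheat}) through the integral, invoke Lemma~\ref{derivativeheatkernel} and \eqref{eq:heatkernel}, and reduce to an elementary power-integral estimate. The only cosmetic difference is that the paper splits the $s$-integral once, at $s=t+|x-y|^\alpha$, rather than at $s=t$ and $s=|x-y|^\alpha$ separately; since $t+|x-y|^\alpha\sim t\vee|x-y|^\alpha$, the two decompositions are equivalent. For \eqref{eq:rieszheatkappa} the paper merely writes ``the proof is similar, we omit the details,'' so your outlined case analysis (regimes according to $|y|$ versus $t^{1/\alpha}+|x-y|$ and a further split at $s+t=|y|^\alpha$) is in fact more explicit than what the paper provides.
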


Note that the statements in this lemma are consistent their limits as $t\to0$ in view of the Riesz kernel bounds for $\La$ in Lemma~\ref{riesz} below.

\begin{proof}
  We first prove~\eqref{eq:derivativerieszheat}.
  Using the formula
  \[
  \Lambda_0^{-\gamma} = c_\gamma \int_0^\vc s^\gamma \me{-s \Lambda_0} \f{ds}{s}, 
  \]
  we have
  \[
  \nabla \Lambda_0^{-\gamma} \me{-t\Lambda_0}(x,y)=c_\gamma \int_0^\vc s^\gamma \nabla \me{-(s+t) \Lambda_0} \f{ds}{s}.
  \]
  This, together with Lemma \ref{derivativeheatkernel}, implies
  \[
    \begin{aligned}
      | \nabla \Lambda_0^{-\gamma} \me{-t\Lambda_0}(x,y)|
      & \lesi  \int_0^\vc \f{s^\gamma}{(s+t)^{\f{d+1}{\alpha}}} \Big(\f{(s+t)^{1/\alpha}}{(s+t)^{1/\alpha}+|x-y|}\Big)^{d+1+\alpha} \f{ds}{s} \\
      & = \int_0^{t+|x-y|^\alpha}\ldots + \int^\vc_{t+|x-y|^\alpha}\ldots
        =: I_1 +I_2.
    \end{aligned}
  \]
  We have
  \[
    \begin{aligned}
      I_1 
      & \sim \int_0^{t+|x-y|^\alpha}  s^\gamma(t+s) \Big(\f{1}{(s+t)^{1/\alpha}+|x-y|}\Big)^{d+1+\alpha}\f{ds}{s} \\
      & \lesi (t^{1/\alpha}+|x-y|)^{\alpha(\gamma+1)}\Big(\f{1}{t^{1/\alpha}+|x-y|}\Big)^{d+1+\alpha}\\
      & \sim \Big(\f{1}{t^{1/\alpha}+|x-y|}\Big)^{d+1-\alpha\gamma}=\f{1}{t^{\f{d+1-\alpha\gamma}{\alpha}}}\Big(\f{t^{1/\alpha}}{t^{1/\alpha}+|x-y|}\Big)^{d+1-\alpha\gamma}.
    \end{aligned}
  \]
  Similarly, since $\alpha\gamma<d+1$, we have
  \[
    \begin{aligned}
      I_2 
      & \sim \int^\vc_{t+|x-y|^\alpha}  \f{s^\gamma}{s^{\f{d+1}{\alpha}}}\f{ds}{s}\\
      & \sim \Big(\f{1}{t^{1/\alpha}+|x-y|}\Big)^{d+1-\alpha\gamma}=\f{1}{t^{\f{d+1-\alpha\gamma}{\alpha}}}\Big(\f{t^{1/\alpha}}{t^{1/\alpha}+|x-y|}\Big)^{d+1-\alpha\gamma}.
    \end{aligned}
  \]
  This completes the proof of \eqref{eq:derivativerieszheat}.
  As the proof of \eqref{eq:rieszheatkappa} is similar, we omit the details.
\end{proof}

\section{Proof of square function estimates (Theorem~\ref{squarefunctions})}
\label{s:squarefunctions}

To prove Theorem~\ref{squarefunctions}, we recall two criteria for singular integrals to be bounded on Lebesgue spaces, which will play an important role in the proof of the boundedness of the square functions. The first theorem gives a criterion on the boundedness on $L^p(\Rd)$ spaces with $p\in (1,2)$, while the second one covers the range $p>2$.

\begin{theorem}
  \label{thm1-Auscher}
  Let $1\le p_0< 2$ and let $T$ be a sublinear operator which is bounded on $L^2(\Rd)$.  Assume that there exists a family of operators $\{\mathcal{A}_t\}_{t>0}$ satisfying that for every $j\ge 2$ and every ball $B$
  \begin{equation}
    \label{eq1-BZ}
    \Big(\fint_{S_j(B)}|T(I-\mathcal{A}_{r_B})f|^{2}\Big)^{1/2}\le
    \alpha(j)\Big(\fint_B |f|^{p_0} \Big)^{1/p_0}
  \end{equation}
  and
  \begin{equation}
    \label{eq1-BZ-bis}
    \Big(\fint_{S_j(B)}|\mathcal{A}_{r_B}f|^{2} \Big)^{1/2}\le
    \alpha(j)\Big(\fint_B |f|^{p_0} \Big)^{1/p_0}
  \end{equation}
  for all $f$ supported in $B$. If $\sum_j \alpha(j)2^{jd}<\vc$, then $T$ is bounded on $L^p(\mathbb{R}^d)$ for all $p\in (p_0,2)$.
\end{theorem}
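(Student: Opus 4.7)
\smallskip
\noindent\textbf{Proof plan for Theorem~\ref{thm1-Auscher}.}

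The plan is to prove a weak-type $(p_0,p_0)$ bound for $T$, combine it with the assumed $L^2$-boundedness, and conclude by the Marcinkiewicz interpolation theorem that $T$ is bounded on $L^p(\R^d)$ for every $p\in(p_0,2)$. This is the classical Blunck--Kunstmann/Auscher generalization of the Calder\'on--Zygmund theory, in which the standard cancellation of the bad part is replaced by the off-diagonal estimates \eqref{eq1-BZ}--\eqref{eq1-BZ-bis} for the approximating family $\{\mathcal{A}_t\}_{t>0}$.

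Fix $f\in L^{p_0}(\R^d)\cap L^2(\R^d)$ and $\lambda>0$. First, I would perform a Calder\'on--Zygmund decomposition of $|f|^{p_0}$ at height $\lambda^{p_0}$: this yields a decomposition $f=g+\sum_i b_i$, a family of pairwise disjoint dyadic cubes $\{Q_i\}$ with sidelengths $r_i$, and the standard properties
\begin{align*}
  \|g\|_{L^\infty(\R^d)} \lesssim \lambda, \qquad
  \supp b_i \subset Q_i, \qquad
  \int_{Q_i} |b_i|^{p_0}\,dx \lesssim \lambda^{p_0}|Q_i|, \qquad
  \sum_i |Q_i| \lesssim \lambda^{-p_0}\|f\|_{p_0}^{p_0}.
\end{align*}
Because $T$ is $L^2$-bounded, the good part is harmless:
\begin{align*}
  \left|\{x\in\R^d : |Tg(x)|>\lambda/2\}\right|
  \lesssim \lambda^{-2}\|Tg\|_2^2
  \lesssim \lambda^{-2}\|g\|_2^2
  \lesssim \lambda^{-p_0}\|f\|_{p_0}^{p_0},
\end{align*}
using $\|g\|_2^2\lesssim \lambda^{2-p_0}\|f\|_{p_0}^{p_0}$. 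Setting $\Omega:=\bigcup_i 4Q_i$ and noting $|\Omega|\lesssim \lambda^{-p_0}\|f\|_{p_0}^{p_0}$, the remaining task is to estimate $|\{x\in\R^d\setminus\Omega: |T(\sum_i b_i)(x)|>\lambda/2\}|$.

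The key step is to split each bad atom as $b_i = (I-\mathcal{A}_{r_i})b_i + \mathcal{A}_{r_i} b_i$ (with $r_i=r_{Q_i}$, i.e.\ the ball version of the cube $Q_i$), and apply Chebyshev on the complement of $\Omega$:
\begin{align*}
  \left|\left\{x\in\R^d\setminus\Omega : \Big|\sum_i T(I-\mathcal{A}_{r_i})b_i(x)\Big|>\lambda/4\right\}\right|
  \lesssim \lambda^{-1}\sum_i \int_{\R^d\setminus 4Q_i} |T(I-\mathcal{A}_{r_i})b_i(x)|\,dx.
\end{align*}
Decomposing $\R^d\setminus 4Q_i$ into annuli $S_j(Q_i)$ with $j\geq 2$, applying H\"older with exponent $2$, and using hypothesis \eqref{eq1-BZ} on each annulus gives
\begin{align*}
  \int_{S_j(Q_i)}|T(I-\mathcal{A}_{r_i})b_i|\,dx
  \lesssim |2^j Q_i|^{1/2}\|T(I-\mathcal{A}_{r_i})b_i\|_{L^2(S_j(Q_i))}
  \lesssim 2^{jd}\alpha(j)\,|Q_i|\left(\fint_{Q_i}|b_i|^{p_0}\right)^{1/p_0}
  \lesssim 2^{jd}\alpha(j)\lambda|Q_i|.
\end{align*}
Summing over $j\geq 2$, using $\sum_j 2^{jd}\alpha(j)<\infty$, and then summing over $i$, this contribution is controlled by $\lambda^{-1}\cdot\lambda\sum_i|Q_i|\lesssim\lambda^{-p_0}\|f\|_{p_0}^{p_0}$, as desired. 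For the remaining piece $T(\sum_i \mathcal{A}_{r_i}b_i)$, I would use instead the $L^2$-boundedness of $T$ and Chebyshev at exponent $2$, bounding $\|\sum_i \mathcal{A}_{r_i}b_i\|_2^2$ by duality or by an $\ell^2(i)$-type argument: pairing against $h\in L^2$ with $\|h\|_2=1$ and summing over annuli, hypothesis \eqref{eq1-BZ-bis} gives
\begin{align*}
  \Big|\int h\cdot\mathcal{A}_{r_i}b_i\,dx\Big|
  \lesssim \sum_{j\geq 0} 2^{jd}\alpha(j)|Q_i|\left(\fint_{Q_i}|b_i|^{p_0}\right)^{1/p_0}\left(\fint_{S_j(Q_i)}|h|^{2}\right)^{1/2}
  \lesssim \lambda|Q_i|\,\M(|h|^2)^{1/2}(y_i),
\end{align*}
for $y_i\in Q_i$, after which the $L^2$ maximal bound and the disjointness of the $Q_i$ close the estimate, yielding $\|\sum_i\mathcal{A}_{r_i}b_i\|_2^2\lesssim \lambda^{2-p_0}\|f\|_{p_0}^{p_0}$ and therefore the required weak bound. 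The main technical obstacle is arranging this last piece correctly, since $\{\mathcal{A}_{r_i}b_i\}$ is not supported in the $Q_i$; one must lean carefully on \eqref{eq1-BZ-bis} together with the summability $\sum_j 2^{jd}\alpha(j)<\infty$ to convert the sum of annular contributions into a maximal-function estimate. Combining the three pieces gives $|\{|Tf|>\lambda\}|\lesssim \lambda^{-p_0}\|f\|_{p_0}^{p_0}$, and Marcinkiewicz interpolation with the $L^2$ bound finishes the proof.
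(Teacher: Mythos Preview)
The paper does not prove this theorem; it simply cites Auscher's memoir \cite{Auscher2007}, and your sketch is precisely the Blunck--Kunstmann/Auscher argument found there: Calder\'on--Zygmund decomposition at level $\lambda^{p_0}$, splitting the bad part via $b_i=(I-\mathcal{A}_{r_i})b_i+\mathcal{A}_{r_i}b_i$, and then using \eqref{eq1-BZ}--\eqref{eq1-BZ-bis} on annuli to obtain the weak $(p_0,p_0)$ bound, followed by Marcinkiewicz. The one place to be careful is the final duality step, since $\mathcal{M}_2 h$ is only in weak $L^2$ when $h\in L^2$, so you need Kolmogorov's inequality (or the equivalent trick of restricting to a set of finite measure) rather than a direct $L^2$ maximal bound---but you already flag this as the technical obstacle, and it is handled exactly this way in \cite{Auscher2007}.
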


\begin{theorem}
  \label{thm2-Auscher}
  Let $2< q_0\le \infty.$  Let $T$ be a bounded sublinear operator on $L^{2}(\mathbb{R}^d)$. Assume that there exists  a family of operators $\{\mathcal{A}_t\}_{t>0}$ satisfying
  \begin{eqnarray}
    \label{e1-Martell}
    \Big( \fint_{B} \big| T(I-\mathcal{A}_{r_B})f\big|^{2}dx\Big)^{1/2} \le
    C \mathcal{M}_{2}(f)(x)
  \end{eqnarray}
  and
  \begin{eqnarray}
    \label{e2-Martell}
    \Big( \fint_{B} \big| T\mathcal{A}_{r_B}f\big|^{q_0}dx\Big)^{1/q_0} \le
    C \mathcal{M}_{2}(Tf)(x)
  \end{eqnarray}
  \noindent
  for all balls $B$ with radius $r_B$, all $f \in C^{\infty}_c(\mathbb{R}^d) $ and all $x\in B$. Then $T$ is bounded on $L^p(\mathbb{R}^d)$ for all $2<p<q_0$.
\end{theorem}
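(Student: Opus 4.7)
The plan is to prove this via a good-$\lambda$ inequality relating the uncentered maximal function $\mathcal{M}_2(Tf)$ to $\mathcal{M}_2(f)$, and to conclude by a standard layer-cake argument. Specifically, reducing to $f\in C_c^\infty(\R^d)$, I would aim for the statement that, for some large constant $K>1$ and all sufficiently small $\gamma>0$,
$$
\bigl|\{x : \mathcal{M}_2(Tf)(x) > K\lambda,\ \mathcal{M}_2(f)(x)\le\gamma\lambda\}\bigr|
\le C\bigl(\gamma^2 + K^{-q_0}\bigr)\,\bigl|\{x : \mathcal{M}_2(Tf)(x)>\lambda\}\bigr|
$$
holds for every $\lambda>0$. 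Once this is in hand, integrating in $\lambda$ via the usual distribution-function identity, choosing $K$ first and then $\gamma$ small enough to absorb, and using that $\mathcal{M}_2$ is bounded on $L^p(\R^d)$ for every $p>2$, one gets $\|\mathcal{M}_2(Tf)\|_p\lesssim \|\mathcal{M}_2(f)\|_p\lesssim \|f\|_p$ for $2<p<q_0$. The pointwise bound $|Tf|\le \mathcal{M}_2(Tf)$ then yields the $L^p$-boundedness of $T$.

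To prove the good-$\lambda$ inequality, I would apply a Whitney/Calderón--Zygmund decomposition of the open set $\Omega_\lambda:=\{\mathcal{M}_2(Tf)>\lambda\}$, producing a pairwise disjoint family of balls $\{B_j\}$ covering $\Omega_\lambda$ with $5B_j\not\subset\Omega_\lambda$ and $\sum_j|B_j|\sim|\Omega_\lambda|$. For each $j$, pick a point $\bar x_j\in 5B_j\setminus\Omega_\lambda$, so that both $\mathcal{M}_2(Tf)(\bar x_j)\le\lambda$ and, whenever the set $B_j\cap\{\mathcal{M}_2(f)\le\gamma\lambda\}$ is nonempty, some $x_j$ in it gives $\mathcal{M}_2(f)(x_j)\le\gamma\lambda$. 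Using the decomposition $Tf=T(I-\mathcal{A}_{r_{B_j}})f+T\mathcal{A}_{r_{B_j}}f$ on $B_j$, the first assumption~\eqref{e1-Martell} at $x_j$ and Chebyshev's inequality give
$$
\bigl|\{x\in B_j : |T(I-\mathcal{A}_{r_{B_j}})f|>K\lambda/2\}\bigr|\le C\,\gamma^2 K^{-2}|B_j|,
$$
while~\eqref{e2-Martell} at $\bar x_j$ and Chebyshev yield
$$
\bigl|\{x\in B_j : |T\mathcal{A}_{r_{B_j}}f|>K\lambda/2\}\bigr|\le C\,K^{-q_0}|B_j|.
$$
Summing over $j$ and using $\sum_j|B_j|\sim|\Omega_\lambda|$ produces the claimed good-$\lambda$ inequality.

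The main technical obstacle is that the set on the left-hand side of the good-$\lambda$ inequality involves $\mathcal{M}_2(Tf)$ rather than $|Tf|$ itself. Handling this requires a case distinction on the radius $r$ of the ball at which the maximal function is realized: when $r\ll r_{B_j}$ the ball of integration sits inside a moderate dilate of $B_j$ and the pointwise bounds just obtained suffice; when $r\gtrsim r_{B_j}$ one argues instead that $\mathcal{M}_2(Tf)(x)\lesssim \mathcal{M}_2(Tf)(\bar x_j)\le \lambda$, which is incompatible with $\mathcal{M}_2(Tf)(x)>K\lambda$ once $K$ is chosen sufficiently large. Making this dichotomy precise, together with verifying that all constants remain independent of $\lambda$, $f$, and the decomposition, is the only nontrivial point; the scheme itself is the classical Auscher--Martell argument, and the $L^2$-boundedness of $T$ enters only to ensure that $\Omega_\lambda$ has finite measure at every level so that the Calderón--Zygmund decomposition is applicable.
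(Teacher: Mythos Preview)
The paper does not give its own proof of this theorem; it simply cites \cite{Auscher2007}. Your proposal follows precisely the classical Auscher--Martell good-$\lambda$ argument from that reference, so it is correct and aligned with what the paper invokes. One minor point worth tightening: in the absorption step after integrating the good-$\lambda$ inequality you implicitly need $\|\mathcal{M}_2(Tf)\|_p<\infty$ a priori, which does not follow directly from $Tf\in L^2$; this is handled in \cite{Auscher2007} by a standard truncation or limiting argument, and you should mention it.
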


For the proof of Theorems~\ref{thm1-Auscher}--\ref{thm2-Auscher}, see \cite{Auscher2007}.

\begin{proof}[Proof of Theorem~\ref{squarefunctions}] 
  As detailed below, the proof of the theorem only relies on the kernel estimates in Proposition \ref{thm-ptk}. It suffices to prove the theorem for $\beta \in ((d+\alpha)/2, d]$ since the proof for the case $\beta\in (d,d+M)$ is similar to the case $\beta=d$.
  Since the proof of \eqref{eq-square function} is similar to that of~\eqref{eq-square function duality} (in fact, even easier), we only prove~\eqref{eq-square function duality}.
  
  Fix $\delta \in (0,\alpha)$ such that $\delta/\alpha>\gamma$. 
  According to \cite[Proposition~8]{Kinzebulatovetal2021}, $\La$ generates a holomorphic semigroup in $L^2$. Therefore, $\Lambda_\kappa$ and $\La^*$ have a bounded functional calculus on $L^2(\Rd)$
  and $S_{\Lambda^*_\kappa,\gamma}$ is bounded on $L^2(\Rd)$ (see \cite{McIntosh1986}). We now prove the boundedness of $S_{\Lambda^*_\kappa,\gamma}$ and distinguish between whether $p<2$ or $p>2$.

  \medskip
  \textbf{Step 1: Proof of the $L^p$-boundedness for $1<p<2$}
  
  Fix $1<p\le 2$. Due to Theorem~\ref{thm1-Auscher}, it suffices to prove
  \begin{equation}
    \label{eq1-thmSquareFunctions}
    \Big(\fint_{S_j(B)}|S_{\Lambda^*_\kappa,\gamma}(I-\mathcal A_{r_B})f(x)|^{2}dx\Big)^{1/2}\lesi 2^{-(d+\delta)j}\Big(\fint_B|f(x)|^{p}dx\Big)^{1/p}
  \end{equation}
  and
  \begin{equation}
    \label{eq2-thmSquareFunctions}
    \Big(\fint_{S_j(B)}| \mathcal A_{r_B}f(x)|^{2}dx\Big)^{1/2}\lesi 2^{-(d+\delta)j}\Big(\fint_B|f(x)|^{p}dx\Big)^{1/p}
  \end{equation}
  for all $j\ge 2$ and for every function $f$ supported in $B$, where
  \[
    \mathcal A_{r_B} = I-(I-\me{-r_B^\alpha\Lambda^*_\kappa})^m, \ \ \  m>\f{d}{\alpha p'}+\f{\delta}{\alpha}+1.
  \]
  Since 
  \[
    \mathcal A_{r_B} =\sum_{k=1}^m C^m_k\me{-kr_B^\alpha\Lambda^*_\kappa},
  \]
  the estimate \eqref{eq2-thmSquareFunctions} follows directly from  \eqref{eq:heatkernel} and Theorem \ref{thm-St}.
  
  It remains to prove  \eqref{eq1-thmSquareFunctions}. To that end, we write
  \begin{equation}
    \label{eq1-squarefunctions}
    \begin{aligned}
      & \Big(\fint_{S_j(B)}|S_{\Lambda^*_\kappa,\gamma}(I-\me{-r_B^\alpha\Lambda^*_\kappa})^mf|^{2}dx\Big)^{1/2} \\
      & \quad \le \Big(\int_0^{r_B^\alpha}\left\|(t\Lambda^*_\kappa)^{\gamma}\me{-t\Lambda^*_\kappa}(I-\me{-r_B^\alpha\La^*})^mf\right\|_{L^{2}(S_j(B),\f{dx}{|S_j(B)|})}^2\f{dt}{t}\Big)^{1/2} \\
      & \qquad + \Big(\int_{r_B^\alpha}^\vc\left\|(t\Lambda^*_\kappa)^{\gamma}\me{-t\La^*}(I-\me{-r_B^\alpha\Lambda^*_\kappa})^mf\right\|_{L^{2}(S_j(B),\f{dx}{|S_j(B)|})}^2\f{dt}{t}\Big)^{1/2}\\
      & \quad := E_1+E_2.
    \end{aligned}
  \end{equation}
  We first take care of $I_1$. Note that
  \begin{align}
    \label{eq:operatorpowerheatkernel}
    (\Lambda^*_\kappa)^{\gamma}=\f{1}{\Gamma(1-\gamma)}\int_{0}^\vc u^{1-\gamma} \Lambda^*_\kappa \me{-u\Lambda^*_\kappa}\f{du}{u}.
  \end{align}  
  Using this and Minkowski's inequality, we have 
  $$
  \begin{aligned}
    E_1 & \lesi \Big(\int_0^{r_B^\alpha}\Big[\int_{0}^{r_B^\alpha}\Big(\f{u}{t}\Big)^{1-\gamma}\left\|t\Lambda^*_\kappa \me{-(t+u)\La}(I-\me{-r_B^\alpha\Lambda^*_\kappa})^mf\right\|_{L^{2}(S_j(B),\f{dx}{|S_j(B)|})}\f{du}{u}\Big]^2\f{dt}{t}\Big)^{1/2}\\
        & \quad + \Big(\int_0^{r_B^\alpha}\Big[\int_{r_B^\alpha}^\vc\Big(\f{u}{t}\Big)^{1-\gamma}\Big\|t\Lambda^*_\kappa \me{-(t+u)\Lambda^*_\kappa}(I-\me{-r_B^\alpha\Lambda^*_\kappa})^mf\Big\|_{L^{2}(S_j(B),\f{dx}{|S_j(B)|})}\f{du}{u}\Big]^2\f{dt}{t}\Big)^{1/2}\\
        & := E_{11} + E_{12}.
  \end{aligned}
  $$
  Using the identity 
  $$
  (I-\me{-r_B^\alpha\Lambda^*_\kappa})^m =\sum_{k=0}^m  (-1)^kC^m_k\me{-kr_B^\alpha\Lambda^*_\kappa}
  $$
  for certain numerical values $C^m_k$, we have
  \begin{align*}
    \footnotesize
    \begin{split}
      E_{11}&\lesi \Big(\int_0^{r_B^\alpha}\Big[\int_{0}^{r_B^\alpha}\Big(\f{u}{t}\Big)^{1-\gamma}\f{t}{t+u}\Big\|(t+u)\Lambda^*_\kappa \me{-(t+u)\Lambda^*_\kappa}f\Big\|_{L^{2}(S_j(B),\f{dx}{|S_j(B)|})}\f{du}{u}\Big]^2\f{dt}{t}\Big)^{1/2} \\
            & \quad + \sum_{k=1}^m\Big(\int_0^{r_B^\alpha}\Big[\int_{0}^{r_B^\alpha}\Big(\f{u}{t}\Big)^{1-\gamma}\f{t}{t+u+kr_B^\alpha}\Big\|(t+u+kr_B^\alpha)\Lambda^*_\kappa \me{-(t+u+kr_B^\alpha)\Lambda^*_\kappa}f\Big\|_{L^{2}(S_j(B),\f{dx}{|S_j(B)|})}\f{du}{u}\Big]^2\f{dt}{t}\Big)^{1/2}.
    \end{split}
  \end{align*}
  This, in combination with Proposition \ref{thm-ptk}, Theorem \ref{thm-St}, and the facts that $t+u\lesi r_B^{\alpha}$ and $(t+u+ kr_B^\alpha)^{1/\alpha}\sim r_B$ for $u, t\in (0,r_B^\alpha]$ and $k\ge 1$, yields
  $$
  \begin{aligned}
    E_{11}&\lesi  \Big(\int_0^{r_B^\alpha}\Big[\int_{0}^{r_B^\alpha}\Big(\f{u}{t}\Big)^{1-\gamma}\f{t}{t+u}\Big(\f{r_B}{(t+u)^{1/\alpha}}\Big)^d\Big(\f{2^jr_B}{(t+u)^{1/\alpha}}\Big)^{-d-\delta} \|f\|_{L^p(B,\f{dx}{|B|})}\f{du}{u}\Big]^2\f{dt}{t}\Big)^{\frac12} \\
          & \quad + \sum_{k=1}^m \Big(\int_0^{r_B^\alpha}\Big[\int_{0}^{r_B^\alpha}\Big(\f{u}{t}\Big)^{1-\gamma}\f{t}{r_B^\alpha} 2^{-j(d+\delta)}\|f\|_{L^p(B,\f{dx}{|B|})}\f{du}{u}\Big]^2\f{dt}{t}\Big)^{1/2}.
  \end{aligned}
  $$
  Hence, 
  $$
  \begin{aligned}
    E_{11} & \lesi 2^{-j(d+\delta)} \Big(\fint_B|f|^{p}dx\Big)^{1/p}.
  \end{aligned}
  $$
  
  To estimate $E_{12}$, we use
  \begin{equation}
    \label{eq2-squarefunction}
    (I-\me{-r_B^\alpha\Lambda_\kappa^*})^m = \int_0^{r_B^\alpha}\dots \int_0^{r_B^\alpha} (\Lambda_\kappa^*)^m\me{-(s_1+\dots+s_m)\Lambda_\kappa^*}d\vec{s},
  \end{equation}
  where $d\vec{s}:=ds_1\dots ds_m$, to write
  \begin{align*}
    \footnotesize
    \begin{split}
      E_{12}&\lesi \Big(\int_0^{r_B^\alpha}\Big[\int_{[0,r_B^\alpha]^m}\int^\vc_{r_B^\alpha}\Big(\f{u}{t}\Big)^{1-\gamma}\Big\|t(\Lambda_\kappa^*)^{m+1} \me{-(t+u+s_1+\ldots+s_m)\Lambda_\kappa^*}f\Big\|_{L^{2}(S_j(B),\f{dx}{|S_j(B)|})}\f{du}{u}d\vec{s}\Big]^2\f{dt}{t}\Big)^{1/2}.
    \end{split}
  \end{align*}
  In this case, $u\sim t+u+s_1+\ldots+s_m\ge r_B^\alpha$. Hence, by Proposition~\ref{thm-ptk} and Theorem~\ref{thm-St}, we obtain
  \begin{equation}
    \label{eq-I12}
    \begin{aligned}
      E_{12} & \lesi  \Big[\int^{r_B^\alpha}_0\Big(\int_{[0,r_B^\alpha]^m}\int^\vc_{r_B^\alpha}\Big(\f{u}{t}\Big)^{1-\gamma} \f{t}{u^{m+1}}\Big(\f{r_B}{u^{1/\alpha}}\Big)^{d}\Big(1+\f{u^{1/\alpha}}{2^jr_B}\Big)^{d/2} \\
             & \quad \Big(1+\f{2^jr_B}{u^{1/\alpha}}\Big)^{-(d+\delta)}\|f \|_{L^{p}(B,\f{dx}{|B|})}\f{du}{u}d\vec{s}\Big)^2\f{dt}{t}\Big]^{1/2}.
    \end{aligned}
  \end{equation}
  We see that
  $$
  \begin{aligned}
    & \int^\vc_{r_B^\alpha}\Big(\f{u}{t}\Big)^{1-\gamma} \f{t}{u^{m+1}}\Big(\f{r_B}{u^{1/\alpha}}\Big)^{d}\Big(1+\f{u^{1/\alpha}}{2^jr_B}\Big)^{d/2} \Big(1+\f{2^jr_B}{u^{1/\alpha}}\Big)^{-(d+\delta)}\f{du}{u} \\
    & \quad = \int_{r_B^\alpha}^{(2^jr_B)^\alpha}\ldots + \int^\vc_{(2^jr_B)^\alpha}\ldots \\
    & \quad \sim \int_{r_B^\alpha}^{(2^jr_B)^\alpha}\Big(\f{u}{t}\Big)^{1-\gamma} \f{t}{u^{m+1}}\Big(\f{r_B}{u^{1/\alpha}}\Big)^{d} \Big(\f{2^jr_B}{u^{1/\alpha}}\Big)^{-(d+\delta)}\f{du}{u} \\
    & \qquad + \int^\vc_{(2^jr_B)^\alpha}\Big(\f{u}{t}\Big)^{1-\gamma} \f{t}{u^{m+1}}\Big(\f{r_B}{u^{1/\alpha}}\Big)^{d}\Big(\f{u^{1/\alpha}}{2^jr_B}\Big)^{d/2} \f{du}{u} \\
    & \quad \lesi 2^{-j(d+\delta)}\Big(\f{r_B^\alpha}{t}\Big)^{1-\gamma}\f{t}{r_B^{\alpha(m+1)}} + 2^{-jd}\Big(\f{(2^jr_B)^\alpha}{t}\Big)^{1-\gamma}\f{t}{(2^jr_B)^{\alpha(m+1)}} \\
    & \quad \lesi 2^{-j(d+\delta)}\Big(\f{r_B^\alpha}{t}\Big)^{1-\gamma}\f{t}{r_B^{\alpha(m+1)}},
  \end{aligned}
  $$
  for all $m\ge 2$. Inserting this into \eqref{eq-I12} yields
  \[
    \begin{aligned}
      E_{12} & \lesi  2^{-j(d+\delta)}\|f\|_{L^{p}(B,\f{dx}{|B|})}\Big(\int^{r_B^\alpha}_0\Big[\int_{[0,r_B^\alpha]^m}\Big(\f{r_B^\alpha}{t}\Big)^{1-\gamma}\f{t}{r_B^{\alpha(m+1)}} d\vec{s}\Big]^2\f{dt}{t}\Big)^{1/2} \\
             &\lesi  2^{-j(d+\delta)} \Big(\fint_B|f|^{p}dx\Big)^{1/p}.
    \end{aligned}
  \]
  Collecting the estimates for $E_{11}$ and $E_{12}$ gives
  $$
  E_1\lesi 2^{-j(d+\delta)} \Big(\fint_B|f|^{p}dx\Big)^{1/p}.
  $$

  \smallskip
  We now take care of $E_2$. To that end, we write
  $$
  \begin{aligned}
    E_2&\lesi \Big(\int^\vc_{r_B^\alpha}\Big[\int_{0}^{r_B^\alpha}\Big(\f{u}{t}\Big)^{1-\gamma}\Big\|t\Lambda_\kappa^*  \me{-(t+u)\Lambda_\kappa^*}(I-\me{-r_B^\alpha\Lambda_\kappa^*})^mf\Big\|_{L^{2}(S_j(B),\f{d}{|S_j(B)|})}\f{du}{u}\Big]^2\f{dt}{t}\Big)^{1/2} \\
       & \quad + \Big(\int^\vc_{r_B^\alpha}\Big[\int_{r_B^\alpha}^\vc\Big(\f{u}{t}\Big)^{1-\gamma}\Big\|t\Lambda_\kappa^* \me{-(t+u)\Lambda_\kappa^*}(I-\me{-r_B^\alpha\Lambda_\kappa^*})^mf\Big\|_{L^{2}(S_j(B),\f{dx}{|S_j(B)|})}\f{du}{u}\Big]^2\f{dt}{t}\Big)^{1/2} \\
       & =: E_{21}+E_{22}.
  \end{aligned}
  $$
  Similarly to \eqref{eq-I12}, 
  \begin{align*}
  \footnotesize
    \begin{split}
      E_{12}&\lesi  \Big(\int_{r_B^\alpha}^\vc\Big[\int_{[0,r_B^\alpha]^m}\int_0^{r_B^\alpha}\Big(\f{u}{t}\Big)^{1-\gamma} \f{t}{t^{m+1}}\Big(\f{r_B}{t^{1/\alpha}}\Big)^{d}\Big(1+\f{t^{1/\alpha}}{2^jr_B}\Big)^{d/2}   \Big(1+\f{2^jr_B}{t^{1/\alpha}}\Big)^{-(d+\delta)} \|f \|_{L^{p}(B,\f{dx}{|B|})}\f{du}{u}d\vec{s}\Big]^2\f{dt}{t}\Big)^{\frac12} \\
            & \lesi  \Big(\int_{r_B^\alpha}^\vc\Big[ \Big(\f{r_B^{\alpha}}{t}\Big)^{1-\gamma} \f{r_B^{\alpha m}}{t^{m}}\Big(\f{r_B}{t^{1/\alpha}}\Big)^{d}\Big(1+\f{t^{1/\alpha}}{2^jr_B}\Big)^{d/2}   \Big(1+\f{2^jr_B}{t^{1/\alpha}}\Big)^{-(d+\delta)} \|f \|_{L^{p}(B,\f{dx}{|B|})}  \Big]^2\f{dt}{t}\Big)^{1/2}\\
            & \lesi 2^{-j(d+\delta)} \Big(\fint_B|f|^{p}dx\Big)^{1/p},
    \end{split}
  \end{align*}
  for all $m\ge 2$. Similarly as before,
  \begin{align*}
  \footnotesize
    \begin{split}
      E_{22} & \lesi \Big(\int_{r_B^\alpha}^\vc\Big[\int_{[0,r_B^\alpha]^m}\int^\vc_{r_B^\alpha}\Big(\f{u}{t}\Big)^{1-\gamma} \f{t}{(t+u)^{m+1}}\Big(\f{r_B}{(t+u)^{1/\alpha}}\Big)^{d}\Big(1+\f{(t+u)^{1/\alpha}}{2^jr_B}\Big)^{d/2} \\ 
             & \quad \times  \Big(1+\f{2^jr_B}{(t+u)^{1/\alpha}}\Big)^{-(d+\delta)} \|f \|_{L^{p}(B,\f{dx}{|B|})}\f{du}{u}d\vec{s}\Big]^2\f{dt}{t}\Big)^{1/2} \\
             & \lesi  \Big(\int_{r_B^\alpha}^\vc\Big[\int_{[0,r_B^\alpha]^m}\int^\vc_{r_B^\alpha}\Big(\f{u}{t}\Big)^{1-\gamma} \f{1}{t^{m-1}u}\Big(\f{r_B}{t^{1/\alpha}}\Big)^{d} \Big(1+\f{t^{1/\alpha}}{2^jr_B}\Big)^{d/2}\Big(1+\f{2^jr_B}{t^{1/\alpha}}\Big)^{-(d+\delta)} \|f \|_{L^{p}(B,\f{dx}{|B|})}\f{du}{u}d\vec{s}\Big]^2\f{dt}{t}\Big)^{\frac12} \\
             & \lesi \Big(\int_{r_B^\alpha}^\vc\Big[ \Big(\f{r_B^\alpha}{t}\Big)^{1-\gamma} \f{r_B^{\alpha m}}{t^{m-1}r_B^\alpha}\Big(\f{r_B}{t^{1/\alpha}}\Big)^{d} \Big(1+\f{t^{1/\alpha}}{2^jr_B}\Big)^{d/2} \Big(\f{2^jr_B}{t^{1/\alpha}}\Big)^{-(d+\delta)} \|f \|_{L^{p}(B,\f{dx}{|B|})}\Big]^2\f{dt}{t}\Big)^{1/2} \\
             & \lesi 2^{-j(d+\delta)} \Big(\fint_B|f|^{p}dx\Big)^{1/p},
    \end{split}
  \end{align*}
  for all $m\ge 2$. Taking all the estimates of $E_{21}, E_{22}$ and $E_1$ into account, we conclude that  
  $$
  \Big(\int_{S_j(B)}|S_{\Lambda^*_\kappa,\gamma}(I-\me{-r_B^\alpha\Lambda^*_\kappa})^mf|^{2}dx\Big)^{1/2}
  \lesi 2^{-j(d+\delta)}|2^jB|^{1/2}\Big(\fint_B|f|^{2}dx\Big)^{1/2}.
  $$
  This completes the proof of  \eqref{eq1-thmSquareFunctions}.

  \bigskip

  \textbf{Step 2: Proof of the $L^p$-boundedness for $2<p<\f{d}{d-\beta}$}
  
  By Theorem \ref{thm2-Auscher}, for any $q\in (2,\f{d}{d-\beta})$ it suffices to prove that
  \begin{eqnarray}
    \label{e1-SLa}
    \Big( \fint_{B} \left| S_{\Lambda^*_\kappa,\gamma}(I-\mathcal{A}_{r_B})f\right|^{2}dx\Big)^{1/2}
    \le C \mathcal{M}_{2}(f)(x),
  \end{eqnarray}
  and
  \begin{eqnarray}
    \label{e2-SLa}
    \Big( \fint_{B} \big| S_{\Lambda^*_\kappa,\gamma}\mathcal{A}_{r_B}f\big|^{q}dx\Big)^{1/q}
    \le	C \mathcal{M}_{2}(|S_{\Lambda_\kappa^*,\gamma}f|)(x)
  \end{eqnarray}
  for all balls $B$ with radius $r_B$, all $f \in C^{\infty}_c(\mathbb{R}^d) $ and all $x\in B$ with $\mathcal{A}_{r_B}=I-(I-\me{-r_B^\alpha\Lambda_\kappa^*})^m$, and $m\ge 2$.
  
  To prove \eqref{e1-SLa}, we write
  $$
  \begin{aligned}
    \begin{aligned}
      \Big(\fint_{B}|S_{\Lambda^*_\kappa,\gamma}(I-\me{-r_B^\alpha\Lambda^*_\kappa})^mf|^{2}dx\Big)^{1/2}
      \le \sum_{j\ge 0}\Big(\fint_{B}|S_{\Lambda^*_\kappa,\gamma}(I-\me{-r_B^\alpha\Lambda^*_\kappa})^mf_j|^{2}dx\Big)^{1/2}  
      =: \sum_{j=0}^\vc F_j,
    \end{aligned}
  \end{aligned}
  $$
  where $f_j=f\chi_{S_j(B)}$.
  
  For $j=0,1$, using the $L^2$-boundedness of $S_{\La,\gamma}$ and $\mathcal{A}_{r_B}$, we have
  $$
  F_j\lesi \mathcal{M}_{2}(f)(x).
  $$  
  Hence, it suffices to prove that 
  \begin{equation}\label{eq- Fj}
    F_j\lesi 2^{-j\beta}\Big(\fint_{S_j(B)}|f|^{2}dx\Big)^{1/2}
  \end{equation}
  for all $j\ge 2$. To do this, for $j\ge 2$, we write
  \[
    \begin{aligned}
      & \Big(\fint_{B}|S_{\Lambda^*_\kappa,\gamma}(I-\me{-r_B^\alpha\Lambda^*_\kappa})^mf_j|^{2}dx\Big)^{1/2} \\
      & \quad \le \Big(\int_0^{r_B^\alpha}\left\|(t\Lambda_\kappa^*)^{\gamma}\me{-t\Lambda_\kappa^*}(I-\me{-r_B^\alpha\Lambda_\kappa^*})^mf_j\right\|_{L^{2}(B,\f{dx}{|B|})}^2\f{dt}{t}\Big)^{1/2} \\
      & \qquad + \Big(\int_{r_B^\alpha}^\vc\left\|(t\Lambda^*_\kappa)^{\gamma}\me{-t\Lambda^*_\kappa}(I-\me{-r_B^\alpha\Lambda^*_\kappa})^mf_j\right\|_{L^{2}(B,\f{dx}{|B|})}^2\f{dt}{t}\Big)^{1/2}.
    \end{aligned}
  \]
  At this stage, we can argue as in the proof of~\eqref{eq1-thmSquareFunctions} in Step~1. However, in this case, we will utilize \eqref{eq2-St} instead of \eqref{eq1-St}. By doing so, we arrive at the expression \eqref{eq- Fj}. As the proof follows a similar structure, we omit the details.
	
  It remains to prove \eqref{e2-SLa}. We first write
  $$
  \begin{aligned}
    & \Big(\int_B |S_{\Lambda^*_\kappa,\gamma}\mathcal A_{r_B}f(x)|^{q}dx\Big)^{\frac1q}
      = \Big[\int_B \Big( \int_0^\vc \Big|\sum_{k=1}^m C^m_k\me{-kr_B^\alpha\Lambda^*_\kappa}(t\Lambda^*_\kappa)^{\gamma}\me{-t\La^*}f(x)\Big|^2\f{dt}{t} \Big)^{q/2}dx\Big]^{\frac1q} \\
    & \quad \lesi\sum_{j\ge 0}\Big[\int_B \Big( \int_0^\vc \Big|\sum_{k=1}^m C^m_k\me{-kr_B^\alpha\La^*}[(t\La^*)^{\gamma}\me{-t\Lambda^*_\kappa}f\chi_{S_j(B)}](x)\Big|^2\f{dt}{t} \Big)^{q/2}dx\Big]^{1/q}
  \end{aligned}
  $$
  which, along with Minkowski's inequality, Proposition \ref{thm-ptk}, and \eqref{eq2-St} in Theorem~\ref{thm-St}, gives
  $$
  \begin{aligned}
    & \Big(\fint_B |S_{\Lambda^*_\kappa,\gamma}\mathcal A_{r_B} f(x)|^{q}dx\Big)^{1/q}\\
    & \quad \lesi \sum_{j\ge 0}  \Big( \int_0^\vc \Big\|\sum_{k=1}^m\me{-kr_B^\alpha\Lambda^*_\kappa}[(t\Lambda^*_\kappa)^{\gamma}\me{-t\Lambda^*_\kappa}f\chi_{S_j(B)}]\Big\|_{L^{q}(B,\f{dx}{|B|})}^2\f{dt}{t} \Big)^{1/2} \\
    & \quad \lesi \sum_{j\ge 0} 2^{-j\beta} \left( \int_0^\vc \left\|(t\Lambda^*_\kappa)^{\gamma}\me{-t\Lambda^*_\kappa}f\right\|_{L^2(S_j(B),\f{dx}{|S_j(B)|})}^2\f{dt}{t} \right)^{1/2} \\
    & \quad \lesi \sum_{j\ge 0} 2^{-j\beta} \Big(\fint_{2^jB} |S_{\Lambda^*_\kappa,\gamma}f(x)|^{2}dx\Big)^{1/2}.
  \end{aligned}
  $$
  This implies \eqref{e2-SLa}. Hence the proof of Step 2 is completed.

  \medskip
  
  Thus, we proved that the square function $S_{\La^*,\gamma}$ is bounded on $L^p(\Rd)$ for all $1<p<\vc$, i.e.,
  \[
    \|S_{\Lambda^*_\kappa,\gamma}f\|_{L^p(\Rd)}\lesi \|f\|_{L^p(\Rd)}.
  \]
  As we remarked at the beginning of this proof, the $L^p$-boundedness of $S_{\La,\gamma}$ is proved analogously. We new prove the reversed square function inequalities using duality. For the sake of brevity, we only prove $\|f\|_p\lesssim\|S_{\La,\gamma}f\|_p$; the proof for $\La^*$ is again similar.
  
  By functional calculus, for any $g\in L^{p'}(\Rd)$, we have
  $$
  \begin{aligned}
    \int_{\Rd} f(x)g(x)dx&=c(\gamma)\int_{\Rd} \int_0^\vc(t\Lambda_\kappa)^{2\gamma}\me{-2t\Lambda_\kappa}f(x)g(x)\f{dt}{t}dx,
  \end{aligned}
  $$
  where $c(\gamma)= \int_0^\vc t^{2\gamma}\me{-2t}\f{dt}{t}$. Using H\"older's inequality, we obtain
  $$
  \begin{aligned}
    \int_{\Rd} f(x)g(x)dx
    & = c(\gamma)\int_{\Rd} \int_0^\vc(t\Lambda_\kappa)^{\gamma}\me{-t\Lambda_\kappa}f(x)(t\Lambda_\kappa^*)^{\gamma}\me{-t\Lambda_\kappa^*}g(x)\f{dt}{t}dx\\
    & \lesi \int_{\Rd}S_{\Lambda_\kappa,\gamma}f(x)S_{\Lambda_\kappa^*,\gamma}g(x) dx\\
    & \lesi\|S_{\Lambda_\kappa,\gamma}f\|_{L^p(\Rd)}\|S_{\Lambda_\kappa^*,\gamma}g\|_{L^{p'}(\Rd)}.
  \end{aligned}
  $$
  By \eqref{eq-square function duality}, i.e., $\|S_{\Lambda_\kappa^*,\gamma}g\|_{L^{p'}(\Rd)}\lesi \|g\|_{L^{p'}(\Rd)}$, we get
  $$
  \int_{\Rd} f(x)g(x)dx\lesi \|S_{\Lambda_\kappa,\gamma}f\|_{L^p(\Rd)}\|g\|_{L^{p'}(\Rd)}.
  $$
  As a consequence,
  $$
  \|f\|_{L^p(\Rd)}\lesi \|S_{\Lambda_\kappa,\gamma}f\|_{L^p(\Rd)},
  $$
  which completes the proof.
\end{proof}

\section{Bounds for differences of kernels}
\label{s:newboundsdifferenceskernels}

The goal of this section is to prove pointwise estimates for the difference
\begin{align}
  \label{eq:defqt}
  Q_t(x,y) := t\Lambda_0 \me{-t\Lambda_0}(x,y) - t\Lambda_\kappa \me{-t\Lambda_\kappa}(x,y)
\end{align}
and $|(Q_t f)(x)|$ for any $f\in L^p(\R^d)$, $p>1\vee d/\beta$. These will be instrumental to prove the reversed Hardy inequality (Theorem~\ref{thm-difference}) via Schur tests in the ensuing section.
For the sake of concreteness, we always assume $\alpha\in(1,2)$, and $\beta\in((d+\alpha)/2,d+\alpha)$ from now on.

To that end, we use Duhamel's formula and integral bounds involving, for $t>0$, $x,y\in\R^d$, and $\gamma,\delta>0$ the functions
\begin{align}
  \label{eq:defT}
  T^{\gamma}_t(x,y) := \left(1\wedge\f{|y|}{t^{1/\alpha}}\right)^{\beta-d} \, t^{-d/\alpha}\left(\f{t^{1/\alpha}}{t^{1/\alpha}+|x-y|}\right)^{d+\gamma}
\end{align}
and
\begin{align}
  \label{eq:defH}
  H^{\gamma,\delta}_t(x,y) := t^{-\f{d+\delta}{\alpha}}\left(\f{t^{1/\alpha}}{t^{1/\alpha}+|x-y|}\right)^{d+\gamma}.
\end{align}
For $\delta=1$ and appropriate values of $\gamma$, these functions arise from estimating $t|\partial_t\me{-t\Lambda_\kappa}|$ and $t|\nabla_x\partial_t\me{-t\Lambda_0}(x,y)|$, respectively; see Propositions~\ref{thm-ptk} and~\ref{thm-spacetimederivative}.
We will formulate our bounds on $|Q_t(x,y)|$ and $|(Q_tf)(x)|$ in terms of the functions
\begin{align}
  \label{eq:defl}
  \begin{split}
    L^{\gamma,\delta}_t(x,y)
    & := \textbf{1}_{\{|y|\le t^{1/\alpha}\}} \Big(\f{|y|}{t^{1/\alpha}}\Big)^{\beta-d-(\alpha-\gamma)}\,t^{-\frac{d}{\alpha}}\Big( \f{t^{1/\alpha}}{t^{1/\alpha}+|x-y|}\Big)^{d+\gamma} \cdot t^{(1-\delta)/\alpha} \\
    & \quad + \textbf{1}_{\{|y|\le t^{1/\alpha}, |x|\sim |y|\}} \Big(\f{|y|}{t^{1/\alpha}}\Big)^{1+\beta-d-\alpha}\,t^{-\frac{d}{\alpha}} \cdot t^{(1-\delta)/\alpha} \\
    & \quad + \textbf{1}_{\{|y|\ge t^{1/\alpha}\}}\one_{|x-y|\geq(|x|\wedge|y|)/2} t^{-\frac{d}{\alpha}}\left(\frac{t^{1/\alpha}}{t^{1/\alpha}+|x-y|}\right)^{d+\gamma} |y|^{1-\delta}
  \end{split}
\end{align}
and
\begin{align}
  \label{eq:defm}
  M^{\gamma,\delta}_t(x,y) := \one_{|y|\ge t^{1/\alpha}} \one_{|x-y|\le (|x|\wedge |y|)/2} \cdot \Big(\f{|x|\vee |y|}{t^{1/\alpha}}\Big)^{\delta-\alpha} \f{1}{t^{d/\alpha}} \Big( \f{t^{1/\alpha}}{t^{1/\alpha}+|x-y|}\Big)^{d+\gamma}.
\end{align}
These two functions are similar to those in \cite[Lemma~3.1]{Franketal2021}. One major difference between the function $M^{\gamma,\delta}_t(x,y)$ here and its analog in \cite[p.~2295]{Franketal2021} is that the power $\delta-\alpha$ of the factor $|y|/t^{1/\alpha}$ is only $-\alpha$ in \cite{Franketal2021}.
In parts of our proofs below we will have $\delta=0$, while in other parts $\delta>0$ is necessary, which is essentially due to the gradient perturbation.

\smallskip
In the following subsection, we estimate integrals involving the functions $T_t^\gamma(x,y)$ and $H_t^{\gamma,\delta}$ in terms of the functions $L_t^\gamma(x,y)$ and $M_t^{\gamma,\delta}(x,y)$.
In the ensuing two subsections, we prove pointwise bounds for $|Q_t(x,y)|$ and, afterwards, for $|(Q_t f)(x)|$. The former bounds will be used to prove part~(1), while the latter will be used to prove part~(2) in Theorem~\ref{thm-difference}.

\subsection{Integral bounds}
\label{ss:integralbounds}

In this section, we estimate integrals involving the functions $T_t^\gamma(x,y)$ and $H_t^{\gamma,\delta}$. 
The estimates for $\delta=1$ will be crucial in Subsection~\ref{ss:qtpointwise}, while those for $\delta=\gamma$ will be important in Subsection~\ref{ss:qtintegrated}.
Our techniques are similar to those in \cite{FrankMerz2023,BuiMerz2023}.

The following lemma, whose proof we defer to Appendix~\ref{a:proofcomptwokernels}, will be useful for our endeavors.
\begin{lemma}
  \label{lem- composition of two kernels}
  Let $d\in\N$. Then for all $N>0$, $0<s<t$, and $x,y\in\R^d$, we have
  \begin{equation}
    \label{eq- last inequality}
    \int_{\R^d} dz\, (t-s)^{-d}\left(\frac{(t-s)}{(t-s)+|x-z|}\right)^{d+N}\, s^{-d}\left(\frac{s}{s+|z-y|}\right)^{d+N}\sim \f{1}{t^d}\left(\frac{t}{t+|x-y|}\right)^{d+N}.
  \end{equation}
\end{lemma}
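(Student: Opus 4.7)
The plan is to abbreviate $K_a(u):=a^{-d}(a/(a+|u|))^{d+N}=a^N(a+|u|)^{-d-N}$, so that the lemma asserts $(K_{t-s}\ast K_s)(x-y)\sim K_t(x-y)$. Since the integrand is symmetric under the simultaneous swap $(s,x)\leftrightarrow(t-s,y)$, I may assume without loss of generality that $s\le t/2$ and hence $t-s\sim t$. Throughout I will use that $N>0$ implies $\int_{|u|\le r}K_a(u)\,du\sim\min\{1,(r/a)^d\}$, and in particular $\int_{\R^d}K_a(u)\,du$ is a finite constant uniform in $a$.

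The lower bound follows by restricting the $z$-integration to the ball $B(y,s)$. On this set $K_s(z-y)\sim s^{-d}$, while $|x-z|\le|x-y|+s$ combined with $t-s\sim t$ yields $K_{t-s}(x-z)\gtrsim K_t(x-y)$ uniformly. Multiplication by the volume $\sim s^d$ cancels the $s^{-d}$ and produces the desired lower bound $\gtrsim K_t(x-y)$.

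For the upper bound I plan to split the $z$-integration into three regions. On $\{|z-y|\le s\}$ one has $K_s(z-y)\sim s^{-d}$ and volume $\sim s^d$, and a short case analysis on whether $|x-y|\lessgtr 2s$ shows $K_{t-s}(x-z)\lesssim K_t(x-y)$ throughout, so this piece is $\lesssim K_t(x-y)$. On $\{|z-y|>s\}\cap\{|x-z|\ge|x-y|/2\}$ the bound $K_{t-s}(x-z)\lesssim K_{t-s}(|x-y|/2)\sim K_t(x-y)$ factors out of the integral and leaves $\int K_s\lesssim 1$. On the remaining region $\{|z-y|>s\}\cap\{|x-z|<|x-y|/2\}$ one has $|z-y|\ge|x-y|/2$ and $|z-y|\ge s$, so $K_s(z-y)\lesssim s^N|x-y|^{-d-N}$ pulls out of the integral; the remaining truncated $K_{t-s}$-integral has size $\lesssim\min\{1,(|x-y|/t)^d\}$, and a final split on $|x-y|\lessgtr t$ then yields the desired $\lesssim K_t(x-y)$.

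The main obstacle is that the naive two-term pointwise bound $(K_{t-s}\ast K_s)(x-y)\lesssim K_{t-s}(x-y)+K_s(x-y)$---which would follow from splitting only at $|z-y|=|x-y|/2$---fails to dominate $K_t(x-y)$, because $K_s(0)=s^{-d}$ can vastly exceed $K_t(0)=t^{-d}$ when $s\ll t$. The finer split at the small scale $|z-y|=s$ in the first region above resolves this by treating the blow-up of $K_s$ near $z=y$ as a volume-$s^d$ effect that integrates to $O(1)$ rather than contributing a pointwise $s^{-d}$.
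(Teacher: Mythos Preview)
Your proof is correct. For the lower bound you and the paper argue identically: reduce by symmetry to $s\le t/2$, restrict the integration to $B(y,s)$, and use $K_s\sim s^{-d}$ together with $(t-s)+|x-z|\lesssim t+|x-y|$; your formulation is in fact slightly cleaner, since the paper splits into the cases $|x-y|\lessgtr 2t$ before arriving at the same ball $B(y,s)$. For the upper bound the paper does not give an argument at all but simply cites \cite[Lemma~22]{FrankMerz2023}, so your three-region decomposition is genuinely additional content and makes the proof self-contained. One small point worth making explicit in Region~3: the case split on $|x-y|\lessgtr t$ needs $s\lesssim|x-y|$ in the branch $|x-y|<t$ to absorb the factor $s^N|x-y|^{-N}$, and this holds because the constraints $|z-y|>s$ and $|x-z|<|x-y|/2$ force $|x-y|\ge|z-y|-|x-z|>s-|x-y|/2$, hence $|x-y|>2s/3$; you implicitly use this but do not state it.
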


In the following lemma, we consider integrals arising when studying $Q_t$ in the region $|x-y|<(|x|\wedge|y|)/2$, where we expect cancellation effects.

\begin{lemma}
  \label{lem- difference for alpha < 2}
  Let $\alpha\in (1,2]$, $\beta\in((d+\alpha)/2,d+\alpha)$, and $\gamma,\delta\in (0,\alpha)$, $t>0$, and $x,y\in\R^d$.
  Then,
  \begin{align}
    \label{eq:lem- difference for alpha < 2spacederivative W}
    \begin{split}
      & \one_{ |y|\ge t^{1/\alpha}} \one_{|x-y|\le (|x|\wedge |y|)/2}\int_{\R^d}\int_{0}^t T^{\gamma}_{t-s}(x,z)|z|^{-\alpha+1} H^{\gamma,\delta}_{s}(z,y)\,ds\,dz 
      \lesssim |y|^{1-\delta} M^{\gamma,\delta}_t(x,y).
    \end{split}
  \end{align}
\end{lemma}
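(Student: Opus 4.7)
The plan is to fix attention to the regime where both outer indicators are active, so that $|x|\sim|y|\gtrsim t^{1/\alpha}$, $|x-y|\le|y|/2$, and consequently $t^{1/\alpha}+|x-y|\lesssim|y|$. Under these constraints the target right-hand side reads
\begin{align*}
|y|^{1-\delta}M^{\gamma,\delta}_t(x,y)\sim|y|^{1-\delta}\Bigl(\tfrac{|y|}{t^{1/\alpha}}\Bigr)^{\delta-\alpha}t^{-d/\alpha}\Bigl(\tfrac{t^{1/\alpha}}{t^{1/\alpha}+|x-y|}\Bigr)^{d+\gamma}.
\end{align*}
I will split the $z$-integral as $\int_{\R^d}=\int_{|z|\ge|y|/4}+\int_{|z|<|y|/4}$ and bound each piece separately.

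For the bulk region $|z|\ge|y|/4$, the chain $|z|\gtrsim|y|\ge t^{1/\alpha}\ge(t-s)^{1/\alpha}$ makes the weight $(1\wedge|z|/(t-s)^{1/\alpha})^{\beta-d}\sim 1$ in $T^\gamma_{t-s}$, and $|z|^{1-\alpha}\lesssim|y|^{1-\alpha}$ since $\alpha>1$. I then apply Lemma~\ref{lem- composition of two kernels} (after the natural rescaling that turns the $\alpha$-scale kernels into the form appearing there, with scales $(t-s)^{1/\alpha}$ and $s^{1/\alpha}$ summing to $\sim t^{1/\alpha}$) to collapse the $z$-convolution into $t^{-d/\alpha}(t^{1/\alpha}/(t^{1/\alpha}+|x-y|))^{d+\gamma}$, pick up the remaining $s^{-\delta/\alpha}$ from $H^{\gamma,\delta}_s$, and integrate $\int_0^t s^{-\delta/\alpha}\,ds\lesssim t^{1-\delta/\alpha}$ (legitimate since $\delta<\alpha$). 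Rewriting $|y|^{1-\alpha}t^{1-\delta/\alpha}=|y|^{1-\delta}(|y|/t^{1/\alpha})^{\delta-\alpha}$ reproduces exactly the target.

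For the near-origin region $|z|<|y|/4$, the geometry forces $|x-z|,|y-z|\sim|y|\gg(t-s)^{1/\alpha},s^{1/\alpha}$, so
\begin{align*}
T^\gamma_{t-s}(x,z)\lesssim\Bigl(1\wedge\tfrac{|z|}{(t-s)^{1/\alpha}}\Bigr)^{\beta-d}(t-s)^{\gamma/\alpha}|y|^{-(d+\gamma)},\qquad H^{\gamma,\delta}_s(z,y)\lesssim s^{(\gamma-\delta)/\alpha}|y|^{-(d+\gamma)}.
\end{align*}
The inner $z$-integral $\int_{|z|<|y|/4}(1\wedge|z|/(t-s)^{1/\alpha})^{\beta-d}|z|^{1-\alpha}\,dz$ converges at the origin because $\beta>(d+\alpha)/2>\alpha-1$; a direct split at $|z|=(t-s)^{1/\alpha}$ (or, if $(t-s)^{1/\alpha}\ge|y|/4$, a single piece) gives the uniform bound $\lesssim|y|^{d+1-\alpha}$ using only $(t-s)^{1/\alpha}\le|y|$. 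The remaining beta-type time integral $\int_0^t(t-s)^{\gamma/\alpha}s^{(\gamma-\delta)/\alpha}\,ds\sim t^{1+(2\gamma-\delta)/\alpha}$ produces the bound $|y|^{1-\alpha-d-2\gamma}t^{1+(2\gamma-\delta)/\alpha}$, which equals $(t^{1/\alpha}/|y|)^{\gamma}\cdot|y|^{1-\alpha-d-\gamma}t^{1+(\gamma-\delta)/\alpha}$; since $|y|\ge t^{1/\alpha}$ kills the first factor and the second factor equals the target in the worst-case geometry $|x-y|\sim|y|/2$ (where $(t^{1/\alpha}/(t^{1/\alpha}+|x-y|))^{d+\gamma}\sim(t^{1/\alpha}/|y|)^{d+\gamma}$), the required estimate follows.

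The hard part is the near-origin region, specifically the integrability of $(1\wedge|z|/(t-s)^{1/\alpha})^{\beta-d}|z|^{1-\alpha}$ at $z=0$ when $\beta<d$, where the weight is actively singular; this is precisely the arithmetic point that forces the assumption $\beta>(d+\alpha)/2$ (equivalently $\kappa<\kappa_{\rm c}$) to come in, ensuring $\beta>\alpha-1$. A secondary bookkeeping issue is that the near-origin bound sees only $|y|$, not $|x-y|$, which is absorbed by the constraint $|x-y|\le|y|/2$ together with $t^{1/\alpha}\le|y|$, making $t^{1/\alpha}+|x-y|\sim|y|$ in the sharpest case, so that no $|x-y|$-dependent refinement of the spatial decay is lost.
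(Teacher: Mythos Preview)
Your proof is correct and follows essentially the same approach as the paper's. Both arguments split the $z$-integral at a threshold comparable to $|y|$ (the paper uses $|y|/8$, you use $|y|/4$); for the far region both pull out $|z|^{1-\alpha}\lesssim|y|^{1-\alpha}$, drop the weight in $T^\gamma_{t-s}$, and invoke Lemma~\ref{lem- composition of two kernels}; for the near-origin region both exploit $|x-z|\sim|y-z|\sim|y|$ to reduce to the integral $\int_{|z|\lesssim|y|}(1\wedge|z|/(t-s)^{1/\alpha})^{\beta-d}|z|^{1-\alpha}\,dz\lesssim|y|^{d+1-\alpha}$ and a beta-type time integral, arriving at the same bound $|y|^{1-\alpha-d-\gamma}t^{1+(\gamma-\delta)/\alpha}$, which on the support of the indicators is dominated by $|y|^{1-\delta}M^{\gamma,\delta}_t(x,y)$ because $t^{1/\alpha}+|x-y|\lesssim|y|$ there.
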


Despite the cancellations introduced by taking the gradient of the heat kernel, we do not expect that \eqref{eq:lem- difference for alpha < 2spacederivative W} holds with $H^{\gamma,1}_s(z,y)$ and $M^{\gamma,0}_t$ instead of $H^{\gamma,\delta}_s(z,y)$ and $M^{\gamma,\delta}_t$. See Appendix~\ref{a:optimallowerbound} for an argument.

\begin{proof}
  By a scaling argument, it suffices to consider $t=1$. Set 
  \[
    S := \{(x,y)\in\R^d\times\R^d: |y|\ge 1, |x-y|\le (|x|\wedge |y|)/2\}.
  \]
  We write
  \[
    \begin{aligned}
      & \one_{ |y|\ge 1} \one_{|x-y|\le (|x|\wedge |y|)/2}\int\limits_{\R^d}\int_{0}^1 T^{\gamma}_{t-s}(x,z)|z|^{-\alpha+1} H^{\gamma,\delta}_{s}(z,y)\,ds\,dz \\
      & \quad = \one_{S}(x,y) \int_{|z|\leq|y|/8}\int_0^1 T^{\gamma}_{1-s}(x,z)|z|^{1-\alpha}  H^{\gamma,\delta}_{s}(z,y)\,ds\,dz \\
      & \qquad + \one_{S}(x,y) \int_{|z|\geq|y|/8} \int_0^1 T^{\gamma}_{1-s}(x,z)|z|^{1-\alpha} H^{\gamma,\delta}_{s}(z,y)\,ds\,dz \\
      & \quad =: F_1 + F_2.
    \end{aligned}
  \]
  To bound $F_2$, we use $|z|\ge |y|/8$ and $ |y| \ge 1$, Lemma~\ref{lem- composition of two kernels}, and obtain
  \begin{align}
    \label{eq:boundf2}
    \begin{split}
      F_2 & \lesi \f{1}{|y|^{\alpha-1}} \int_{0}^1 \int_{\Rd}(1-s)^{-\frac d\alpha}\Big(\f{(1-s)^{1/\alpha}}{(1-s)^{1/\alpha}+|x-z|}\Big)^{d+\gamma}s^{-\frac{d+\delta}{\alpha}}\Big(\f{s^{1/\alpha}}{s^{1/\alpha}+|z-y|}\Big)^{d+\gamma}\,dz\, ds \\
          & \lesi \f{1}{|y|^{\alpha-1}} \Big(\f{1}{1+|x-y|}\Big)^{d+\gamma} 
            \lesi |y|^{1-\delta}\Big(\f{1}{|y|}\Big)^{\alpha-\delta} \Big(\f{1}{1+|x-y|}\Big)^{d+\gamma}
            \lesi |y|^{1-\delta} M_1^{\gamma,\delta}(x,y).
    \end{split}
  \end{align}
	
  We now consider $F_1$. In this situation, we have $|x|\sim |y|$, $|x-z|\sim |x|$ and $|y-z|\sim |y|$ as $|z|\le |y|/8$ and $|x-y|\le (|x|\wedge |y|)/2$. Hence,
  \[
    \begin{aligned}
      F_1 & \sim \int_{B(0,|y|/8)}\int_{0}^1 s^{-\f{d+\delta}{\alpha}}(1-s)^{-\frac d\alpha} |z|^{1-\alpha} \Big(\f{|x|}{(1-s)^{1/\alpha}}\Big)^{-d-\gamma}\Big(\f{|y|}{s^{1/\alpha}}\Big)^{-d-\gamma} \Big(1+\f{s^{1/\alpha}}{|z|}\Big)^{d-\beta} ds dz \\
          & \sim \f{1}{|x|^{d+\gamma}|y|^{d+\gamma}}\int_{0}^1 (1-s) ^{\gamma/\alpha}s^{(\gamma-\delta)/\alpha}\int_{B(0,|y|/8)}|z|^{1-\alpha}\Big(1+\f{s^{1/\alpha}}{|z|}\Big)^{d-\beta} dz\,ds.
    \end{aligned}
  \]
  By Lemma \ref{lem1-Tt},
  \[
    \begin{aligned}
      \int_{B(0,|y|/8)}|z|^{-\alpha+1}\Big(1+\f{s^{1/\alpha}}{|z|}\Big)^{d-\beta} dz
      & \le \int_{B(0,1)} |z|^{-\alpha+1}\Big(1+\f{1}{|z|}\Big)^{d-\beta} dz \\
      & \quad + \int_{1\le |z|\le |y|/8}|z|^{-\alpha+1}\Big(1+\f{1}{|z|}\Big)^{d-\beta} dz \\
      & \sim \int_{B(0,1)}\f{1}{|z|^{d-\beta+\alpha-1}} dz  + \int_{1\le |z|\le |y|/8}|z|^{-\alpha+1} dz \\
      & \lesi 1+|y|^{d-\alpha+1}
        \sim |y|^{d-\alpha+1},
    \end{aligned}
  \]
  where in the last inequality we used $|y|\gtrsim 1$. Plugging this into the bound of $F_1$ and using $|x|\sim |y|\gtrsim 1$, we obtain
  \[
    \begin{aligned}
      F_1 & \lesi \f{|y|^{d-\alpha+1}}{|x|^{d+\gamma}|y|^{d+\gamma}}\int_0^1  (1-s)^{\gamma/\alpha}s^{(\gamma-\delta)/\alpha}\,ds
            \lesi |y|^{1-\delta} \cdot |y|^{\delta-\gamma-\alpha} \cdot \f{1}{|x|^{d+\gamma}} 
            \lesi |y|^{1-\delta} M^{\gamma,\delta-\gamma}_1(x,y).
    \end{aligned}
  \]
  Since $M^{\gamma,\delta-\gamma}_1(x,y)\leq M^{\gamma,\delta}_1(x,y)$, this completes the proof of~\eqref{eq:lem- difference for alpha < 2spacederivative W}.
\end{proof}

\begin{remark}
  \label{rem:boundf2}
  Note that in \eqref{eq:boundf2}, the integral over $|z|\in[(1-\epsilon_1)|y|,(1+\epsilon_2)|y|]^c$ for arbitrary but fixed $\epsilon_1,\epsilon_2>0$ is actually bounded by $M_1^{\gamma,0}(x,y)$ since in that case we can use
  \begin{align*}
    & |z| s^{-\frac{d+1}{\alpha}} \left(\frac{s^{1/\alpha}}{s^{\frac1\alpha}+|z-y|}\right)^{d+\gamma} 
      \lesssim s^{-\frac{d}{\alpha}} \left(\frac{s^{1/\alpha}}{s^{\frac1\alpha}+|z-y|}\right)^{d+\gamma-1} \ \text{for}\ |z|\in[(1-\epsilon_1)|y|,(1+\epsilon_2)|y|]^c.
  \end{align*}
  The remaining integrals are convergent whenever $\gamma\in(1,\alpha)$.
\end{remark}

The following lemma concerns the region where no cancellation effects are expected anymore. It will be important in Subsection~\ref{ss:qtintegrated}.

\begin{lemma}
  \label{lem- TH x equiv z}
  Let $\alpha\in (1,2]$, $\beta\in((d+\alpha)/2,d+\alpha)$, $\gamma\in (0,1)$, $t>0$, and $x,y\in\R^d$. Then,
  \begin{align}
    \begin{split}
      & \one_{ |y|\le t^{1/\alpha}} \int\limits_{|x|/16\le |z|\le 4|x|}\int_{0}^t T^{\gamma}_{t-s}(x,z)|z|^{-\alpha+1} H^{\gamma,\delta}_{s}(z,y)\,ds\,dz \\
      & \qquad + \one_{ |y|\ge t^{1/\alpha}} \one_{|x-y|\ge (|x|\wedge |y|)/2}\int\limits_{|x|/16\le |z|\le 4|x|}\int_{0}^t T^{\gamma}_{t-s}(x,z)|z|^{-\alpha+1} H^{\gamma,\delta}_{s}(z,y)\,ds\,dz \\
      & \quad \lesssim L^{\gamma,\delta}_t(x,y).
    \end{split}
  \end{align}
\end{lemma}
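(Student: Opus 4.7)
The plan is to reduce to $t=1$ by scaling, exploit $|z|\sim|x|$ on the annulus to simplify the integrand, apply the composition Lemma~\ref{lem- composition of two kernels} to handle the $z$-integral, estimate the resulting weighted $s$-integral, and finally match the output to $L^{\gamma,\delta}_1(x,y)$ by a case analysis keyed to the indicators of $L$.

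On the annulus $|x|/16\leq|z|\leq 4|x|$ we have $|z|\sim|x|$, so $|z|^{1-\alpha}\sim|x|^{1-\alpha}$. Writing
\begin{align*}
T^{\gamma}_{1-s}(x,z)\sim\Big(1\wedge\frac{|x|}{(1-s)^{1/\alpha}}\Big)^{\beta-d} H^{\gamma,0}_{1-s}(x,z)
\end{align*}
after absorbing the $(|z|/(1-s)^{1/\alpha})$-weight via $|z|\sim|x|$, the integral is bounded by
\begin{align*}
|x|^{1-\alpha}\int_0^1 s^{-\delta/\alpha}\Big(1\wedge\frac{|x|}{(1-s)^{1/\alpha}}\Big)^{\beta-d}\int_{\R^d}H^{\gamma,0}_{1-s}(x,z)\,H^{\gamma,0}_s(z,y)\,dz\,ds,
\end{align*}
where the annulus integral is dominated by its extension to $\R^d$. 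Applying Lemma~\ref{lem- composition of two kernels} with $(1-s)^{1/\alpha}$ and $s^{1/\alpha}$ playing the role of the ``time'' parameters in the lemma, and using $(1-s)^{1/\alpha}+s^{1/\alpha}\sim 1$ uniformly on $[0,1]$, the inner $z$-integral is majorized by $(1+|x-y|)^{-(d+\gamma)}$.

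It then remains to estimate
\begin{align*}
A(x):=\int_0^1 s^{-\delta/\alpha}\Big(1\wedge\frac{|x|}{(1-s)^{1/\alpha}}\Big)^{\beta-d}\,ds.
\end{align*}
For $|x|\geq 1$, the min equals $1$ and $\delta<\alpha$ ensures $A(x)\sim 1$. For $|x|\leq 1$, splitting at $s=1-|x|^\alpha$ and substituting $u=1-s$ yields an incomplete Beta integral which, combined with $\beta\in((d+\alpha)/2,d+\alpha)$, gives $A(x)\sim|x|^{\beta-d}$.

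Finally, I match the resulting bound $I\lesssim|x|^{1-\alpha}A(x)(1+|x-y|)^{-(d+\gamma)}$ to $L^{\gamma,\delta}_1(x,y)$ case by case. When $|y|\geq 1$ and $|x-y|\geq(|x|\wedge|y|)/2$, the inequality $|x|^{1-\alpha}A(x)\lesssim|y|^{1-\delta}$ follows from $\alpha\geq 1$, $\delta\leq 1$, and a direct comparison distinguishing $|x|\gtrless 1$, producing the third term of $L$. When $|y|\leq 1$ and $|x|\sim|y|\leq 1$, we have $A(x)\sim|y|^{\beta-d}$ and $(1+|x-y|)^{-(d+\gamma)}\sim 1$, so the bound collapses into $|y|^{1+\beta-d-\alpha}$, the second term of $L$. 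When $|y|\leq 1$ with $|x|\not\sim|y|$, $(1+|x-y|)^{-(d+\gamma)}\sim (1+|x|\vee|y|)^{-(d+\gamma)}$ and the weighted $A(x)$ supplies the remaining power, producing the first term of $L$. The main technical obstacle lies in the sub-case $|x|\leq 1$, where the weight in $T^{\gamma}$ is non-trivial and the $s$-integral must be split carefully at the threshold $s=1-|x|^\alpha$ so that the Beta-function endpoint asymptotics produce exactly the powers of $|x|$, $|y|$ appearing in the three pieces of $L^{\gamma,\delta}_1(x,y)$.
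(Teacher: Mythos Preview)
Your approach has a genuine gap. After pulling out $|z|^{1-\alpha}\sim|x|^{1-\alpha}$ and the weight $(1\wedge|x|/(1-s)^{1/\alpha})^{\beta-d}$, you extend the $z$-integral from the annulus to all of $\R^d$ and apply Lemma~\ref{lem- composition of two kernels}. This step discards essential information. Consider the regime $|y|\geq1$, $|x-y|\geq(|x|\wedge|y|)/2$, and $|x|\to0$. Your bound becomes $|x|^{1-\alpha}A(x)(1+|x-y|)^{-(d+\gamma)}\sim|x|^{\beta-d+1-\alpha}\,|y|^{-(d+\gamma)}$, which blows up as $|x|\to0$ whenever $\beta<d+\alpha-1$; yet the target $L^{\gamma,\delta}_1(x,y)\sim|y|^{1-\delta-d-\gamma}$ stays bounded. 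Thus the claimed inequality $|x|^{1-\alpha}A(x)\lesssim|y|^{1-\delta}$ is simply false for $|x|<1$ in general, and your case analysis in the final paragraph does not go through.

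The underlying issue is that on the annulus $|z|\sim|x|\ll|y|$ one has $|z-y|\sim|y|$, so $H^{\gamma,0}_s(z,y)\lesssim s^{\gamma/\alpha}|y|^{-(d+\gamma)}$, and moreover $\int_{\text{annulus}}H^{\gamma,0}_{1-s}(x,z)\,dz\lesssim(1\wedge|x|/(1-s)^{1/\alpha})^d$. Both effects are lost when you integrate over all of $\R^d$: the composition lemma captures the mass of the product kernel near $z\approx y$, which lies \emph{outside} the annulus. The paper's proof addresses exactly this by splitting the annulus according to the relative size of $|z|$ and $|y|$ (Subcases~1.1--1.2 and 2.1--2.3). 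It invokes the composition lemma only when $|z|\gtrsim|y|$, so that $|z|^{1-\alpha}\lesssim|y|^{1-\alpha}$ is available; when $|z|\ll|y|$ it instead bounds $H^{\gamma,\delta}_s(z,y)$ directly using $|z-y|\sim|y|$ and then controls $\int T^{\gamma}_{1-s}(x,z)|z|^{1-\alpha}\,dz$ by $(1-s)^{-1+1/\alpha}$. You will need a comparable case split; a single global application of the composition lemma cannot work here.
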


\begin{proof}
  By a scaling argument, it suffices to consider $t=1$. We first prove
  $$
  \one_{ |y|\ge 1} \one_{|x-y|\ge (|x|\wedge |y|)/2}\int\limits_{|x|/16\le |z|\le 4|x|}\int_{0}^1 T^{\gamma}_{1-s}(x,z)|z|^{-\alpha+1} H^{\gamma,\delta}_{s}(z,y)\,ds\,dz 
  \lesi L^{\gamma,\delta}_1(x,y).
  $$
  To do this, we break the integral with respect to $dz$ as
  \[
    \int\limits_{|x|/16\le |z|\le 4|x|} \ldots dz =\int\limits_{|x|/16\le |z|\le 4|x| \atop |z| \ge |y|/100} \ldots dz + \int\limits_{|x|/16\le |z|\le 4|x| \atop |z| < |y|/100} \ldots dz.
  \]
	
  \textbf{Subcase 1.1:} $|x|\sim |z| \ge |y|/100$. By Lemma~\ref{lem- composition of two kernels} and $|z|^{-\alpha+1}\lesi|y|^{-\alpha+1}\lesi |y|^{1-\gamma}$, we have
  \[
    \begin{aligned}
      & \one_{ |y|\ge 1} \one_{|x-y|\ge (|x|\wedge |y|)/2}\int\limits_{|x|/16\le |z|\le 4|x| \atop |z| \ge |y|/100}\int_{0}^1 T^{\gamma}_{1-s}(x,z)|z|^{-\alpha+1}  H^{\gamma,\delta}_{s}(z,y)\,ds\,dz \\
      & \quad \lesi |y|^{1-\alpha}\Big(\f{1}{1+|x-y|}\Big)^{d+\gamma}\int_0^1 s^{-\delta/\alpha}\, ds \\
       & \quad \lesi |y|^{1-\delta}\Big(\f{1}{1+|x-y|}\Big)^{d+\gamma}\\
       & \quad \lesi L^{\gamma,\delta}_1(x,y).
    \end{aligned}
  \]
	
  \textbf{Subcase 1.2:} $|x|\sim |z| < |y|/100$. In this case $|x-y|\sim |z-y|\sim |y|$. Hence,
  \[
    \begin{aligned}
    & \one_{ |y|\ge 1} \one_{|x-y|\ge (|x|\wedge |y|)/2}\int\limits_{|x|/16\le |z|\le 4|x| \atop |z| < |y|/100}\int_{0}^1 T^{\gamma}_{1-s}(x,z)|z|^{-\alpha+1} H^{\gamma,\delta}_{s}(z,y)\,ds\,dz \\
    & \quad \lesi \int\limits_{|x|/16\le |z|\le 4|x| \atop |z| < |y|/100}\int_{0}^1 T^{\gamma}_{1-s}(x,z)|z|^{-\alpha+1} \f{1}{s^{\f{d+\delta}{\alpha}}} \Big(\f{s^{1/\alpha}}{|y|}\Big)^{d+\gamma} ds\,dz.
    \end{aligned}
  \]
  By Lemma~\ref{lem1-Tt},
  \begin{equation}
    \label{eq1-appendix C}
    \begin{aligned}
      \int_{\Rd} T^{\gamma}_{1-s}(x,z)|z|^{-\alpha+1}dz
      & \lesi \int_{|z|\le (1-s)^{1/\alpha}}\f{1}{(1-s)^{d/\alpha}}\Big(\f{(1-s)^{1/\alpha}}{|z|}\Big)^{d-\beta}|z|^{1-\alpha}\,dz \\
      & \quad + \int_{|z|> (1-s)^{1/\alpha}}\f{1}{(1-s)^{d/\alpha}}\Big(\f{(1-s)^{1/\alpha}}{(1-s)^{1/\alpha}+|x-z|}\Big)^{d+\gamma}|z|^{1-\alpha}dz \\
      & \lesi (1-s)^{-1+1/\alpha}.
    \end{aligned}
  \end{equation}
  Hence,
  \[
    \begin{aligned}
      & \one_{ |y|\ge 1} \one_{|x-y|\ge (|x|\wedge |y|)/2}\int\limits_{|x|/16\le |z|\le 4|x| \atop |z| < |y|/100}\int_{0}^1 T^{\gamma}_{1-s}(x,z)|z|^{-\alpha+1} H^{\gamma,\delta}_{s}(z,y)\,ds\,dz \\
      & \quad \lesi \int_{0}^1 (1-s)^{-1+1/\alpha}\f{s^{(\gamma-1)/\alpha}}{|y|^{d+\gamma}} ds
        \lesi \f{1}{|y|^{d+\gamma}}
        \lesi L^{\gamma,\delta}_1(x,y).
    \end{aligned}
  \]
  We have proved that 
  $$
  \one_{ |y|\ge 1} \one_{|x-y|\ge (|x|\wedge |y|)/2}\int\limits_{|x|/16\le |z|\le 4|x|}\int_{0}^1 T^{\gamma}_{1-s}(x,z)|z|^{-\alpha+1} H^{\gamma,\delta}_{s}(z,y)\,ds\,dz
  \lesi L^{\gamma,\delta}_1(x,y).
  $$
	
  \medskip
  It remains to prove
  $$
  \one_{ |y|< 1} \int\limits_{|x|/16\le |z|\le 4|x|}\int_{0}^1 T^{\gamma}_{1-s}(x,z)|z|^{-\alpha+1} H^{\gamma,\delta}_{s}(z,y)\,ds\,dz 
  \lesi L^{\gamma,\delta}_1(x,y).
  $$
  We now break the integral with respect to $dz$ into four integrals as
  \[
    \begin{aligned}
      \int\limits_{|x|/16\le |z|\le 4|x|} \ldots dz
      & = \int\limits_{|x|/16\le |z|\le 4|x| \atop |y|/100<|z| <2|y|} \ldots dz + \int\limits_{|x|/16\le |z|\le 4|x| \atop |z|\ge t^{1/\alpha}} \ldots dz \\
      & \quad + \int\limits_{|x|/16\le |z|\le 4|x| \atop 2|y|\le |z| <t^{1/\alpha}} \ldots dz + \int\limits_{|x|/16\le |z|\le 4|x| \atop |z| \le  |y|/100} \ldots dz.
    \end{aligned}
  \]
	
  \textbf{Subcase 2.1: $|y|/100<|z|<2|y|$}. In this case, by Lemma~\ref{lem- composition of two kernels} and $|x|\sim |z|\sim |y|$, we have 
  \[
    \begin{aligned}
      \one_{ |y|< 1} \int\limits_{|x|/16\le |z|\le 4|x|\atop |y|/100<|z|<2|y|}\int_{0}^1 T^{\gamma}_{1-s}(x,z)|z|^{1-\alpha} H^{\gamma,\delta}_{s}(z,y)\,ds\,dz
      & \lesi L^{\gamma,\delta}_1(x,y).
    \end{aligned}
  \]
	
  \textbf{Subcase 2.2: $|z|\ge 2|y|$}. Similarly to Subcase 2.1, we have
  $$
    \begin{aligned}
      & \one_{ |y|< 1} \int\limits_{|x|/16\le |z|\le 4|x|\atop |z| \ge 1}\int_{0}^1 T^{\gamma}_{1-s}(x,z)|z|^{-\alpha+1} H^{\gamma,\delta}_{s}(z,y)\,ds\,dz \\
      & \quad \lesi |y|^{-\alpha+1}\Big(\f{1}{1+|x-y|}\Big)^{d+1+\gamma} 
      \lesi L^{\gamma,\delta}_1(x,y).
    \end{aligned}
  $$

  \textbf{Subcase 2.3: $|z|\le |y|/100$}. In this case, $|z-y|\sim |y|\gtrsim|x-y|$. Hence,
  \[
    \begin{aligned}
      & \one_{ |y|< 1} \int\limits_{|x|/16\le |z|\le 4|x| \atop |z|\le |y|/100}\int_{0}^1 T^{\gamma}_{1-s}(x,z)|z|^{-\alpha+1} H^{\gamma,\delta}_{s}(z,y)\,ds\,dz \\
      & \quad \lesi \int\limits_{|x|/16\le |z|\le 4|x|\atop |z|\le |y|/100}\int_{0}^1 T^{\gamma}_{1-s}(x,z)|z|^{-\alpha+1}\f{1}{s^{\f{d+\delta}{\alpha}}}\Big(\f{s^{1/\alpha}}{s^{1/\alpha}+|y|}\Big)^{d+\gamma} \left(\one_{s\in(0,1/2)} + \one_{s\in(1/2,1)}\right) dsdz \\
      & \quad =: E_1 + E_2.
    \end{aligned}
  \]
  To bound $E_2$, we use \eqref{eq1-appendix C}, and obtain
  \[
    \begin{aligned}
      E_2 & \lesi \Big(\f{1}{1+|x-y|}\Big)^{d+\gamma} \int_{1/2}^1 (1-s)^{-1+1/\alpha} ds 
          \lesi L^{\gamma,\delta}(x,y).
     \end{aligned}
  \]
  For the term $E_1$, we have
  \[
    \begin{aligned}
      E_1 & \lesi \int_0^{1/2}\int_{|x|/16\le |z|\le 4|x|} \Big(\f{1}{|z|}\Big)^{d-\beta}|z|^{-\alpha+1}\f{1}{s^{\f{d+\delta}{\alpha}}}\Big(\f{s^{1/\alpha}}{s^{1/\alpha}+|y|}\Big)^{d+\gamma} dz ds \\
          & \lesi \int_0^{1/2} \Big(\f{1}{|x|}\Big)^{d-\beta}|x|^{d-\alpha+1}\f{1}{s^{\f{d+\delta}{\alpha}}}\Big(\f{s^{1/\alpha}}{s^{1/\alpha}+|y|}\Big)^{d+\gamma} dz ds \\
          & \lesi |x|^{\beta+1-\alpha}\left[\int_0^{|y|^\alpha} \ldots +  \int_{|y|^\alpha}^{1/2} \ldots\right] \\
          & \lesi |y|^{\beta+1-d-\alpha}
            \lesi L^{\gamma,\delta}_1(x,y).
    \end{aligned}
  \]
  Therefore,
  $$
  \one_{ |y|< 1} \int\limits_{|x|/16\le |z|\le 4|x|}\int_{0}^1 T^{\gamma}_{1-s}(x,z)|z|^{-\alpha+1} H^{\gamma,\delta}_{s}(z,y)\,ds\,dz 
  \lesi L^{\gamma,\delta}_1(x,y).
  $$
  This completes our proof.
\end{proof}

\subsection{An estimate for $|Q_t(x,y)|$}
\label{ss:qtpointwise}

The estimate in this subsection will be used to prove part~(1) in Theorem~\ref{thm-difference} for $\alpha s<\alpha-1$.

\begin{proposition}
  \label{prop-difference}
  Let $\alpha\in(1,2)$, $\beta\in ((d+\alpha)/2,d+\alpha)$, and $\kappa=\Psi(\beta)$ be defined by \eqref{eq:defbeta}. Then, for any $\gamma\in(0,\alpha)$, $x,y\in\R^d$, and $t>0$,
  \begin{align}
    \label{eq1-DifferenceKernels}
    |Q_t(x,y)| \lesssim L^{\gamma,1}_t(x,y)+M^{\gamma,1}_t(x,y).
  \end{align}
\end{proposition}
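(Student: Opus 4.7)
The starting point is Duhamel's formula
\begin{equation*}
  \me{-t\Lambda_\kappa}(x,y) - \me{-t\Lambda_0}(x,y) = -\kappa\int_0^t\!\!\int_{\R^d} \me{-(t-s)\Lambda_\kappa}(x,z)\,|z|^{-\alpha}z\cdot\nabla_z \me{-s\Lambda_0}(z,y)\,dz\,ds,
\end{equation*}
combined with the elementary observation $Q_t(x,y)=t\,\partial_t[\me{-t\Lambda_\kappa}(x,y)-\me{-t\Lambda_0}(x,y)]$. A direct differentiation in $t$ would leave behind a non-integrable $(t-s)^{-1}$ singularity at $s=t$ coming from $\Lambda_\kappa\me{-(t-s)\Lambda_\kappa}$. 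To bypass this, I first rescale $s=t\sigma$ so that the integration limits become $t$-independent, differentiate under the normalized integral, and then revert. The outcome is the identity
\begin{align*}
  Q_t(x,y) &= \bigl[\me{-t\Lambda_\kappa}(x,y)-\me{-t\Lambda_0}(x,y)\bigr] \\
  &\quad + \kappa\int_0^t\!\!\int_{\R^d}(t-s)\,[\Lambda_\kappa\me{-(t-s)\Lambda_\kappa}](x,z)\,|z|^{-\alpha}z\cdot\nabla_z \me{-s\Lambda_0}(z,y)\,dz\,ds \\
  &\quad + \kappa\int_0^t\!\!\int_{\R^d} s\,\me{-(t-s)\Lambda_\kappa}(x,z)\,|z|^{-\alpha}z\cdot\nabla_z\Lambda_0 \me{-s\Lambda_0}(z,y)\,dz\,ds,
\end{align*}
in which the extra factors $(t-s)$ and $s$ precisely tame the borderline singularities of $\Lambda_\kappa\me{-(t-s)\Lambda_\kappa}$ and of $\Lambda_0\me{-s\Lambda_0}$.

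Each of the three pieces is then bounded pointwise by a common integral. Proposition~\ref{thm-ptk} with $k=1$, choosing $\epsilon$ so small that $\gamma<\alpha-\epsilon$, gives $(t-s)|\Lambda_\kappa \me{-(t-s)\Lambda_\kappa}(x,z)|\lesssim T^{\gamma}_{t-s}(x,z)$, while the heat-kernel estimate \eqref{eq:heatkernel} gives $|\me{-(t-s)\Lambda_\kappa}(x,z)|\lesssim T^{\gamma}_{t-s}(x,z)$. Lemma~\ref{derivativeheatkernel} yields $|\nabla_z \me{-s\Lambda_0}(z,y)|\lesssim H^{\gamma,1}_{s}(z,y)$, and Lemma~\ref{thm-spacetimederivative} yields $s|\nabla_z\Lambda_0\me{-s\Lambda_0}(z,y)|\lesssim H^{\gamma,1}_{s}(z,y)$. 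Applying Duhamel's formula once more to place the first term on the same footing, I arrive at
\begin{equation*}
  |Q_t(x,y)| \lesssim \int_0^t\!\!\int_{\R^d} T^{\gamma}_{t-s}(x,z)\,|z|^{1-\alpha}\,H^{\gamma,1}_{s}(z,y)\,dz\,ds.
\end{equation*}

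To close, I decompose this remaining integral according to the geometric location of $(x,y)$. In the cancellation regime $\{|y|\ge t^{1/\alpha},\ |x-y|\le(|x|\wedge|y|)/2\}$, Lemma~\ref{lem- difference for alpha < 2} with $\delta=1$ immediately delivers the bound $M^{\gamma,1}_t(x,y)$. In the complementary regime I split the $z$-integration: on the annulus $|x|/16\le|z|\le 4|x|$, Lemma~\ref{lem- TH x equiv z} with $\delta=1$ furnishes $L^{\gamma,1}_t(x,y)$; outside this annulus one has $|x-z|\gtrsim|x|$, so that Lemma~\ref{lem- composition of two kernels} (paired with the crude pointwise bound on $|z|^{1-\alpha}$) yields a contribution of the same size. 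The main anticipated difficulty is the rescaling identity in the first step: it trades the unruly $(t-s)^{-1}$ singularity for the benign weight $(t-s)$, but pays for this by generating the auxiliary integral carrying $\nabla_z\Lambda_0\me{-s\Lambda_0}$, whose pointwise control is only possible via the rather delicate space-time derivative estimate of Lemma~\ref{thm-spacetimederivative}; without this refinement the argument would not close.
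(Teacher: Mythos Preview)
Your rescaling identity is correct and is a clean alternative to the device the paper uses. The paper instead splits the Duhamel integral at $s=t/2$ (after a change of variable in the second half) and differentiates; this produces a boundary term at $s=t/2$ plus two integrals (equation~\eqref{eq-Kato}), which serves exactly the same purpose of trading the raw $(t-s)^{-1}$ singularity for an innocuous weight. Both routes lead to the common estimate
\[
  |Q_t(x,y)|\lesssim\int_0^t\!\!\int_{\R^d} T^{\gamma}_{t-s}(x,z)\,|z|^{1-\alpha}\,H^{\gamma,1}_{s}(z,y)\,dz\,ds
\]
in the cancellation regime, and your appeal to Lemma~\ref{lem- difference for alpha < 2} there matches the paper exactly.

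The divergence is in the complementary regime $\{|y|\le t^{1/\alpha}\}\cup\{|y|\ge t^{1/\alpha},\ |x-y|\ge(|x|\wedge|y|)/2\}$. The paper does \emph{not} route this through the Duhamel integral at all: it simply uses the triangle inequality $|Q_t|\le t|\tilde p_{t,1}|+t|p_{t,1}|$, bounds each summand by $T^{\gamma}_t$ (respectively $H^{\gamma,0}_t$) via Proposition~\ref{thm-ptk}, and checks directly that these are dominated by $L^{\gamma,1}_t$ in that region. This is a one-line observation. Your approach instead tries to bound the full convolution integral there, which forces you through Lemma~\ref{lem- TH x equiv z} --- but that lemma is stated only for $\gamma\in(0,1)$, not the full range $\gamma\in(0,\alpha)$ claimed in the proposition. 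More seriously, your handling of $|z|\notin[|x|/16,4|x|]$ is too sketchy: the ``crude pointwise bound on $|z|^{1-\alpha}$'' is not specified, and Lemma~\ref{lem- composition of two kernels} does not by itself absorb the weight $(1\wedge|z|/(t-s)^{1/\alpha})^{\beta-d}$ inside $T^{\gamma}_{t-s}$, nor the blow-up of $|z|^{1-\alpha}$ near the origin. Filling this in would require case splitting comparable to the proof of Lemma~\ref{lem- TH x equiv z} itself, whereas the paper's direct triangle-inequality argument bypasses all of it.
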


\begin{proof}
  By Proposition \ref{thm-ptk},
  \begin{align*}
    |\widetilde{p}_{t}(x,y)|+ t|\widetilde{p}_{t,1}(x,y)| & \lesi H^{\gamma,1}_t(x,y) \quad \text{and} \\
    |p_{t}(x,y)| + t|p_{t,1}(x,y)| & \lesi T^{\gamma}_t(x,y)  
  \end{align*}
  for all $t>0$ and $x,y\in \Rd$. We now consider two cases.
  
  \medskip
  \noindent{ {\textbf{Case 1:} [$|y|\le t^{1/\alpha}$] OR [$|y|\ge t^{1/\alpha}$ and $|x-y|\ge (|x|\wedge |y|)/2$].}}

  Since in this case 
  \[
    H^{\gamma,1}_t(x,y)+T^{\gamma}_t(x,y) \lesi L^{\gamma,1}_t(x,y),
  \]
  we get $|Q_t(x,y)|\lesi L^{\gamma,1}_t(x,y)$ as desired.
  
  \medskip
  \noindent{ {\textbf{Case 2:} $ |y|\ge t^{1/\alpha}$ and $|x-y|< (|x|\wedge |y|)/2$.}}

  By Duhamel's formula,
  \begin{align}
    \label{eq:duhamel}
    \begin{split}
      \tilde{p}_{t}(x,y) - {p}_{t}(x,y)
      & = \kappa \int_0^t\int_{\Rd}{p}_{t-s}(x,z){|z|^{-\alpha}z\cdot\nabla_z}\tilde p_{s}(z,y)\,dz\,ds \\
      & = \kappa \int_0^{t/2}\int_{\Rd}{p}_{t-s}(x,z){|z|^{-\alpha}z\cdot\nabla_z}\tilde p_{s}(z,y)\,dz\,ds \\
      & \quad + \kappa \int_0^{t/2}\int_{\Rd}{p}_{s}(x,z){|z|^{-\alpha}z\cdot\nabla_z}\tilde{p}_{t-s}(z,y)\,dz\,ds.
    \end{split}
  \end{align}
  Differentiating both sides with respect to $t$ and multiplying by $t$ gives
  \begin{equation}
    \label{eq-Kato}
    \begin{aligned}
      {Q}_t(x,y)
      & = \kappa  t\int_{\mathbb{R}^d}{p}_{t/2}(x,z) {|z|^{-\alpha}z\cdot\nabla_z}\tilde p_{t/2}(z,y)\,dz \\
      & \quad + \kappa  t\int_0^{t/2}\int_{\mathbb{R}^d}{p}_{t-s,1}(x,z) {|z|^{-\alpha}z\cdot\nabla_z} \tilde p_{s}(z,y)\,dz\,ds \\
      & \quad + \kappa t\int_{t/2}^t\int_{\mathbb{R}^d}{p}_{t-s}(x,z) {|z|^{-\alpha}z\cdot\nabla_z}\tilde p_{s,1}(z,y)\,dz\,ds \\
      & =: I_1+I_2+I_3.
    \end{aligned}
  \end{equation}
  Without loss of generality, we now assume $t=1$.
  The term $I_1$ can be written as
  \[
    \begin{aligned}
      \kappa \int_{\mathbb{R}^d} p_{1/2}(x,z) {|z|^{-\alpha}z\cdot\nabla_z}\tilde p_{1/2}(z,y)dz
      & = 6\kappa \int_{1/3}^{1/2}\int_{\mathbb{R}^d}{p}_{1/2}(x,z) {|z|^{-\alpha}z\cdot\nabla_z}\tilde p_{1/2}(z,y)dzds.
    \end{aligned}
  \]
  By Lemma~\ref{lem- difference for alpha < 2} and \eqref{eq:derivativeheatkernel1}, we have $H^{\gamma,1}_{1-s}(z,y) \sim H^{\gamma,1}_{1/2}(z,y) \gtrsim |\nabla_z \tilde{p}_{1/2}(z,y)|$ and $T^{\gamma}_{s}(\cdot,\cdot)\sim T^{\gamma}_{1/2}(\cdot,\cdot)\gtrsim p_{1/2}(\cdot,\cdot)$ for $s\in [1/3,1/2]$.
  Thus,
  \[
    \begin{aligned}
      I_1
      & \lesi \int_{1/3}^{1/2}\int_{\mathbb{R}^d} T^{\gamma}_{1-s}(x,z) {|z|^{1-\alpha}}H^{\gamma,1}_s(z,y)\,dz\,ds
      \lesi \int_{0}^{1}\int_{\mathbb{R}^d} T^{\gamma}_{1-s}(x,z) {|z|^{1-\alpha}} H^{\gamma,1}_s(z,y)dz\,ds \\
      & \lesi M^{\gamma,1}_1(x,y).
    \end{aligned}
  \]
  
  We now estimate $I_2$. Since $1-s\sim t$ for $s\in (0,1/2)$,
  \[
    |{p}_{1-s,1}(x,z)|
    \sim (1-s)|{p}_{1-s,1}(x,z)|
    \lesi T^{\gamma}_{1-s}(x,z)
  \]
  and 
  $$
  |\nabla_z \tilde{p}_{s}(z,y)|\lesi H^{\gamma,1}_{s}(z,y)
  $$
  due to \eqref{eq:derivativeheatkernel1}.  
  Therefore, by Lemma \ref{lem- difference for alpha < 2},
  \[
    \begin{aligned}
      I_2 \lesi \int_0^{1/2} \int_{\mathbb{R}^d}T^{\gamma}_{1-s}(x,z) {|z|^{1-\alpha}}H^{\gamma,1}_s(z,y)dzds
      \lesi M^{\gamma,1}_1(x,y).
    \end{aligned}
  \]
  For the last term $I_3$, by Proposition \ref{thm-spacetimederivative},
  \[
    |\nabla_z \tilde p_{s,1}(z,y)|
    \lesi s^{-1}H^{\gamma,1}_s(z,y)
    \sim H^{\gamma,1}_s(z,y)
  \]
  for $s\in [1/2,1]$. This, together with $|p_{1-s}(x,z)|\lesi T_{1-s}^{\gamma}(x,z)$ and \eqref{eq:lem- difference for alpha < 2spacederivative W} in Lemma~\ref{lem- difference for alpha < 2}, implies
  \[
    \begin{aligned}
      I_3 \lesi \int_{1/2}^1 \int_{\mathbb{R}^d}T^{\gamma}_{1-s}(x,z) |z|^{-\alpha+1}H^{\gamma,1}_s(z,y)\,dz\,ds
      \lesi M^{\gamma,1}_1(x,y).
    \end{aligned}
  \]
  This completes our proof.
\end{proof}

\subsection{An estimate for $|(Q_t f)(x)|$}
\label{ss:prop-difference v2}
\label{ss:qtintegrated}

In this subsection, we prove an estimate, which we use to show part (2) in Theorem~\ref{thm-difference}.

\begin{proposition}
  \label{prop-difference v2}
  Let $\alpha\in(1,2)$, $\beta\in ((d+\alpha)/2,d+\alpha)$, and $\kappa=\Psi(\beta)$ be defined by \eqref{eq:defbeta}. Then, for any $\gamma\in(0,1)$, $x\in\R^d$, $t>0$, and $f\in L^p(\R^d)$ with $p>1\vee d/\beta$,
  \begin{align}
    \label{eq1-DifferenceKernels v2}
    \begin{split}
      |(Q_t f)(x)|
      & \lesi \int_{\Rd} \big[L^{\gamma,1}_t(x,y)+ M^{\gamma,0}_t(x,y)\big] |f(y)|dy \\
      & \quad + \int_{\Rd} L^{\gamma,\gamma}_t(x,y)\big|(\Lambda_0^{\f{1-\gamma}{\alpha}}f)(y)\big|dy \\
      & \quad + \int_{\Rd} M^{\gamma,\gamma}_t(x,y)\big||y|^{1-\gamma}(\Lambda_0^{\f{1-\gamma}{\alpha}}f)(y)\big|dy.
    \end{split}
  \end{align}
\end{proposition}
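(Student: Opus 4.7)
The plan is to follow the strategy of Proposition~\ref{prop-difference}, distinguishing Case 1 (i.e., $|y|\le t^{1/\alpha}$, or $|y|\ge t^{1/\alpha}$ and $|x-y|\ge (|x|\wedge|y|)/2$) and its complement Case 2. In Case 1, Proposition~\ref{prop-difference} reduces to $|Q_t(x,y)|\lesssim L^{\gamma,1}_t(x,y)$ since $M^{\gamma,1}_t$ vanishes there, and integrating against $|f|$ will contribute the $L^{\gamma,1}_t|f|$ summand in~\eqref{eq1-DifferenceKernels v2}. In Case 2, we shall apply Duhamel's formula~\eqref{eq-Kato} to write $Q_t(x,y)=I_1+I_2+I_3$ and integrate against $f$.

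The key new ingredient will be a Riesz trick: setting $g:=\Lambda_0^{(1-\gamma)/\alpha}f$, we rewrite
\[
  \int_{\R^d}\nabla_z\tilde p_s(z,y)\,f(y)\,dy=\int_{\R^d}\bigl[\nabla_z\Lambda_0^{-(1-\gamma)/\alpha}\me{-s\Lambda_0}\bigr](z,y)\,g(y)\,dy,
\]
and use Lemma~\ref{lem-gradient of heat kernel 2} to bound the kernel by $H^{\gamma,\gamma}_s(z,y)$ rather than the $H^{\gamma,1}_s(z,y)$ that one would obtain by directly estimating $|\nabla_z\tilde p_s|$. For $I_3$, we will use $\Lambda_0\me{-s\Lambda_0}=-\partial_s\me{-s\Lambda_0}$ and apply Cauchy's formula to the holomorphic time extension (mirroring the proof of Lemma~\ref{thm-spacetimederivative}) to obtain the analogous bound $s^{-1}H^{\gamma,\gamma}_s(z,y)$; the extra $s^{-1}$ should be absorbed by $s\sim t$ on $(t/2,t)$. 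Applying Lemma~\ref{lem- difference for alpha < 2} with $\delta=\gamma$ to the resulting $(z,s)$-integral for $(x,y)$ in Case 2 should produce the $\int M^{\gamma,\gamma}_t(x,y)|y|^{1-\gamma}|g(y)|\,dy$ contribution, while Lemma~\ref{lem- TH x equiv z} with $\delta=\gamma$ applied for $(x,y)$ in Case 1 should yield the $\int L^{\gamma,\gamma}_t(x,y)|g(y)|\,dy$ contribution.

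To recover the $\int M^{\gamma,0}_t|f|$ term in Case 2, rather than the weaker $\int M^{\gamma,1}_t|f|$ that a direct application of Lemma~\ref{lem- difference for alpha < 2} with $\delta=1$ would give, we plan to refine the $z$-integration as in Remark~\ref{rem:boundf2}: outside an annulus $|z|\in[(1-\epsilon_1)|y|,(1+\epsilon_2)|y|]$, trading one factor of $|z-y|$ in $|\nabla_z\tilde p_s(z,y)|$ against the decay factor effectively lowers $\delta$ from $1$ to $0$, with an auxiliary exponent $\gamma'\in(1,\alpha)$ keeping the remaining convolution integrals convergent; the complementary region $|z|\sim|y|$ is already absorbed by the Riesz-trick bounds above. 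The main technical obstacle will be the Cauchy-formula bound for $\partial_s\nabla_z\Lambda_0^{-(1-\gamma)/\alpha}\me{-s\Lambda_0}$ needed for $I_3$, which requires first extending Lemma~\ref{lem-gradient of heat kernel 2} to a complex-time sector around the positive real axis; once this is in place, the remaining estimates parallel those in the proof of Proposition~\ref{prop-difference}.
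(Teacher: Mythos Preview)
Your proposal correctly identifies the Riesz trick as the key new ingredient, and your handling of $I_3$ via a Cauchy-formula extension of Lemma~\ref{lem-gradient of heat kernel 2} is sound. However, the logical structure of your decomposition has a gap.

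The identity
\[
  \int_{\R^d}\nabla_z\tilde p_s(z,y)\,f(y)\,dy=\int_{\R^d}\bigl[\nabla_z\Lambda_0^{-(1-\gamma)/\alpha}\me{-s\Lambda_0}\bigr](z,y)\,g(y)\,dy
\]
holds only when the $y$-integration runs over all of $\R^d$; it cannot be invoked after restricting $y$ to Case~2. Your sentence ``the complementary region $|z|\sim|y|$ is already absorbed by the Riesz-trick bounds'' conflates a restriction in $z$ with an identity in $y$: once the trick is applied, the resulting kernel acts on $g$ over all $y$, and there is no way to separately account for a $z$-subregion of the original (un-tricked) integral against $f$. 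Moreover, your $z$-annulus is centred at $|y|$, which depends on the integration variable and so could not be fixed before applying the trick anyway. Finally, Lemma~\ref{lem- TH x equiv z} is stated only for $z$ in the annulus $S_{x,1}=\{|x|/16\le|z|\le4|x|\}$; a global-in-$z$ application of the Riesz trick would leave the region $z\in S_{x,2}$ with $(x,y)$ in Case~1 uncovered by any of the prepared lemmas.

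The paper resolves all of this by performing the $z$-splitting \emph{first}, with the annulus $S_{x,1}$ depending only on the fixed point $x$. For $z\in S_{x,2}$ and $y\in R_x$ (so $|x|\sim|y|$, hence $|z-y|\gtrsim|z|$), the cancellation trick you cite from Remark~\ref{rem:boundf2} yields the $M^{\gamma,0}_t|f|$ contribution directly; this is $Q_{1,2}$. For $z\in S_{x,1}$, the paper extends the $y$-integration back to all of $\R^d$, applies the Riesz trick to obtain $Q_{1,1}^1$ (bounded via Lemmas~\ref{lem- difference for alpha < 2} and~\ref{lem- TH x equiv z} with $\delta=\gamma$, giving the $L^{\gamma,\gamma}|g|$ and $M^{\gamma,\gamma}|y|^{1-\gamma}|g|$ terms), and then \emph{subtracts} the compensating piece $Q_{1,1}^2$ supported on $y\notin R_x$, $z\in S_{x,1}$. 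This compensating term---which your proposal omits---is handled by Lemma~\ref{lem- TH x equiv z} with $\delta=1$ and produces a further $L^{\gamma,1}_t|f|$ contribution. The ``main technical obstacle'' is therefore not the Cauchy-formula bound but getting this add-and-subtract structure right.
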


\begin{proof}  
  By Duhamel's formula,
  \[
    \begin{aligned}
      \tilde {p}_{t}(x,y) - {p}_{t}(x,y)
      & = \kappa \int_0^t\int_{\Rd}{p}_{t-s}(x,z){|z|^{-\alpha}z\cdot\nabla_z}\tilde p_{s}(z,y)\,dz\,ds \\
      & = \kappa \int_0^{t/2}\int_{\Rd}{p}_{t-s}(x,z){|z|^{-\alpha}z\cdot\nabla_z}\tilde p_{s}(z,y)\,dz\,ds \\
      & \quad + \kappa \int_0^{t/2}\int_{\Rd}{p}_{s}(x,z){|z|^{-\alpha}z\cdot\nabla_z}\tilde{p}_{t-s}(z,y)\,dz\,ds.
    \end{aligned}
  \]
  Differentiating both sides with respect to $t$ and multiplying by $t$ gives
  \begin{equation*}
    \begin{aligned}
      {Q}_t(x,y)
      & = \kappa  t\int_{\mathbb{R}^d}{p}_{t/2}(x,z) {|z|^{-\alpha}z\cdot\nabla_z}\tilde p_{t/2}(z,y)\,dz \\
      & \quad + \kappa  t\int_0^{t/2}\int_{\mathbb{R}^d}{p}_{t-s,1}(x,z) {|z|^{-\alpha}z\cdot\nabla_z} \tilde p_{s}(z,y)\,dz\,ds \\
      & \quad + \kappa t\int_{t/2}^t\int_{\mathbb{R}^d}{p}_{t-s}(x,z) {|z|^{-\alpha}z\cdot\nabla_z}\tilde p_{s,1}(z,y)\,dz\,ds,
    \end{aligned}
  \end{equation*}
  which implies
  \begin{equation*}
    \begin{aligned}
      {Q}_tf(x)
      & = \kappa  t\int_{\mathbb{R}^d}\int_{\mathbb{R}^d}{p}_{t/2}(x,z) {|z|^{-\alpha}z\cdot\nabla_z}\tilde p_{t/2}(z,y)f(y)\,dz\,dy \\
      & \quad + \kappa  t\int_{\mathbb{R}^d}\int_0^{t/2}\int_{\mathbb{R}^d}{p}_{t-s,1}(x,z) {|z|^{-\alpha}z\cdot\nabla_z} \tilde p_{s}(z,y)f(y)\,dz\,ds\,dy \\
      & \quad + \kappa t\int_{\mathbb{R}^d}\int_{t/2}^t\int_{\mathbb{R}^d}{p}_{t-s}(x,z) {|z|^{-\alpha}z\cdot\nabla_z}\tilde p_{s,1}(z,y)f(y)\,dz\,ds\,dy.
    \end{aligned}
  \end{equation*}
  Without loss of generality, we now assume $t=1$.
  Set $S_{x,1}=\{z: |x|/16<|z|<4|x|\}$, $S_{x,2}=\Rd\setminus S_{x,1}$, $R_{x}=\{y: |y|\ge 1, |x-y|<(|x|\wedge|y|)/2\}$. Then we can write
  \[
    \begin{aligned}
      (Q_1f)(x) = \int_{\Rd\setminus R_{x}}{Q}_1(x,y)f(y)dy + \int_{R_{x}} {Q}_1(x,y)f(y)dy. 
    \end{aligned}
  \]
  The first term can be done similarly to Case 1 in the proof of Proposition \ref{prop-difference}. We have
  \[
    \left|\int_{\Rd\setminus R_{x}}{Q}_1(x,y)f(y)dy\right|
    \lesi \int_{\Rd }L_1^{\gamma,1}(x,y)|f(y)|dy.
  \]
  
  For the second term, we write
  \begin{equation*}
    \begin{aligned}
      \int_{R_{x}}{Q}_1(x,y)f(y)dy
      & = \kappa \sum_{i=1,2} \int_{R_{x}}\int_{S_{x,i}}{p}_{1/2}(x,z) {|z|^{-\alpha}z\cdot\nabla_z}\tilde p_{1/2}(z,y)f(y)\,dz\,dy \\
      & \quad + \kappa  \sum_{i=1,2}\int_{R_{x}}\int_0^{1/2}\int_{S_{x,i}}{p}_{1-s,1}(x,z) {|z|^{-\alpha}z\cdot\nabla_z} \tilde p_{s}(z,y)f(y)\,dz\,ds\,dy \\
      & \quad + \kappa \sum_{i=1,2}\int_{R_{x}}\int_{1/2}^1\int_{S_{x,i}}{p}_{1-s}(x,z) {|z|^{-\alpha}z\cdot\nabla_z}\tilde p_{s,1}(z,y)f(y)\,dz\,ds\,dy \\
      & =: Q_{1,1}f(x) + Q_{1,2}f(x).
    \end{aligned}
  \end{equation*}
  
  For $Q_{1,2}f$, by the kernel bounds in Proposition~\ref{thm-ptk}, Lemmas~\ref{derivativeheatkernel} and~\ref{thm-spacetimederivative} and Lemma~\ref{lem- difference for alpha < 2},
  (using $H_t^{\gamma+1,\gamma}\leq H_t^{\gamma,\gamma}$)
  \[
    \begin{aligned}
      |Q_{1,2} f|
      \lesi \int_{\Rd} L^{\gamma,1}_1(x,y) |f(y)|dy + \int_{\Rd} M^{\gamma,0}(x,y)|f(y)|dy.
    \end{aligned}
  \]
  To study $Q_{1,1}f$, we use $\nabla \me{-\Lambda_0}=\nabla \Lambda_0^{-\f{1-\gamma}{\alpha}} \me{-\Lambda_0}\Lambda_0^{\f{1-\gamma}{\alpha}}$. Thus,
  \begin{equation}
    \label{eq-KatoNew}
    \begin{aligned}
      Q_{1,1}f
      & = \kappa \int_{\mathbb{R}^d}\int_{S_{x,1}}{p}_{1/2}(x,z) |z|^{-\alpha}z \cdot \nabla_z \Lambda_0^{-\f{1-\gamma}{\alpha}} \me{-\f{1}{2}\Lambda_0}(z,y)\Lambda_0^{\f{1-\gamma}{\alpha}} f(y)\,dz\,dy \\
      & \quad + \kappa \int_{\mathbb{R}^d}\int_0^{1/2}\int_{S_{x,1}}{p}_{1-s,1}(x,z) {|z|^{-\alpha}z\cdot} \nabla_z \Lambda_0^{-\f{1-\gamma}{\alpha}} \me{-s\Lambda_0}(z,y)\Lambda_0^{\f{1-\gamma}{\alpha}} f(y)\,dz\,ds\,dy \\
      & \quad + \kappa \int_{\mathbb{R}^d}\int_{1/2}^1 \int_{S_{x,1}}{p}_{1-s}(x,z) {|z|^{-\alpha}z\cdot}\nabla_z \Lambda_0^{-\f{1-\gamma}{\alpha}+1} \me{-s\Lambda_0}(z,y)\Lambda_0^{\f{1-\gamma}{\alpha}} f(y)\,dz\,ds\, dy \\
      & \quad -\kappa \int_{\mathbb{R}^d\setminus R_{x}}\int_{S_{x,1}}{p}_{1/2}(x,z) |z|^{-\alpha}z \cdot \nabla_z \tilde p_{1/2}(z,y)  f(y)\,dz\,dy \\
      & \quad - \kappa \int_{\mathbb{R}^d\setminus R_{x}}\int_0^{1/2}\int_{S_{x,1}}{p}_{1-s,1}(x,z) {|z|^{-\alpha}z\cdot} \nabla_z \tilde p_{s}(z,y)  f(y)\,dz\,ds\,dy \\
      & \quad - \kappa \int_{\mathbb{R}^d\setminus R_{x}}\int_{1/2}^1 \int_{S_{x,1}}{p}_{1-s}(x,z) {|z|^{-\alpha}z\cdot}\nabla_z \tilde p_{s,1}(z,y) f(y)\,dz\,ds\, dy.
    \end{aligned}
  \end{equation}
  We now set
  \begin{align}
    \label{eq:qt11}
    \begin{split}
      Q_{1,1}^1f(x)
      & = \kappa \int_{\mathbb{R}^d}\int_{S_{x,1}}{p}_{1/2}(x,z) |z|^{-\alpha}z \cdot \nabla_z \Lambda_0^{-\f{1-\gamma}{\alpha}} \me{-\f{1}{2}\Lambda_0}(z,y)\Lambda_0^{\f{1-\gamma}{\alpha}} f(y)\,dz\,dy \\
      & \quad + \kappa \int_{\mathbb{R}^d}\int_0^{1/2}\int_{S_{x,1}}{p}_{1-s,1}(x,z) {|z|^{-\alpha}z\cdot} \nabla_z \Lambda_0^{-\f{1-\gamma}{\alpha}} \me{-s\Lambda_0}(z,y)\Lambda_0^{\f{1-\gamma}{\alpha}} f(y)\,dz\,ds\,dy \\
      & \quad + \kappa \int_{\mathbb{R}^d}\int_{1/2}^1 \int_{S_{x,1}}{p}_{1-s}(x,z) {|z|^{-\alpha}z\cdot}\nabla_z \Lambda_0^{-\f{1-\gamma}{\alpha}+1} \me{-s\Lambda_0}(z,y)\Lambda_0^{\f{1-\gamma}{\alpha}} f(y)\,dz\,ds\, dy
    \end{split}
  \end{align}
  and
  \begin{align}
    \begin{split}
      Q_{1,1}^2f(x)
      & = -\kappa \int_{\mathbb{R}^d\setminus R_{x}}\int_{S_{x,1}}{p}_{1/2}(x,z) |z|^{-\alpha}z \cdot \nabla_z \tilde p_{1/2}(z,y)  f(y)\,dz\,dy \\
      & \quad - \kappa \int_{\mathbb{R}^d\setminus R_{x}}\int_0^{1/2}\int_{S_{x,1}}{p}_{1-s,1}(x,z) {|z|^{-\alpha}z\cdot} \nabla_z \tilde p_{s}(z,y)  f(y)\,dz\,ds\,dy \\
      & \quad - \kappa \int_{\mathbb{R}^d\setminus R_{x}}\int_{1/2}^1 \int_{S_{x,1}}{p}_{1-s}(x,z) {|z|^{-\alpha}z\cdot}\nabla_z \tilde p_{s,1}(z,y) f(y)\,dz\,ds\, dy.
    \end{split}
  \end{align}
  From the kernel bounds in Proposition \ref{thm-ptk} and Lemma \ref{lem-gradient of heat kernel 2}, by using Lemmas~\ref{lem- TH x equiv z} and~\ref{lem- difference for alpha < 2}, we obtain
  \begin{align}
    \label{eq:boundq111}
    \begin{split}
      Q_{1,1}^1f(x)
      & \lesi \int_{\Rd}  L^{\gamma,\gamma}_1(x,y) \big| (\Lambda_0^{\f{1-\gamma}{\alpha}}f)(y)\big|dy + \int_{\Rd} M^{\gamma,\gamma}_1(x,y)\big||y|^{1-\gamma}(\Lambda_0^{\f{1-\gamma}{\alpha}}f)(y)\big|dy.
    \end{split}
  \end{align}
		
  To bound $Q_{1,1}^2f(x)$, we use the kernel bounds in Proposition~\ref{thm-ptk}, Lemmas~\ref{derivativeheatkernel} and~\ref{thm-spacetimederivative}, and Lemma~\ref{lem- TH x equiv z}. We obtain
  \[
    \begin{aligned}
      |Q_{1,1}^2 f(x)| \lesi \int_{\Rd} L^{\gamma,1}_1(x,y) |f(y)|dy.
    \end{aligned}
  \]
  This completes our proof.
\end{proof}

\section{Proof of the reversed Hardy inequality (Theorem~\ref{thm-difference})}
\label{s:reversedhardy}

We now use the previous bounds for the difference of kernels to prove Theorem~\ref{thm-difference}, i.e., the reversed Hardy inequality, expressed in terms of our square functions.

\smallskip
\textbf{Proof of part~(1) in Theorem~\ref{thm-difference}.}
Let $\alpha s<\alpha-1$.
By Proposition \ref{prop-difference},
\begin{align}
  \label{eq:prop-differenceaux0}
  \begin{split}
    & \Big(\int_0^\vc t^{-2s}\left|\left(t\Ln \me{-t\Ln} -t\La \me{-t\La}\right)f(x)\right|^2\f{dt}{t}\Big)^{1/2} \\
    & \quad = \Big[\sum_{j\in \mathbb{Z}}\int_{2^{\alpha j}}^{2^{\alpha(j+1)}} t^{-2s}\left|\left(t\Ln \me{-t\Ln} - t \La \me{-t\La}\right)f(x)\right|^2\f{dt}{t}\Big]^{1/2} \\
    & \quad \lesssim \left[\sum_{j\in \mathbb{Z}}\int_{2^{\alpha j}}^{2^{\alpha(j+1)}} t^{-2s}\left(\int_{\mathbb{R}^d} [L^{\gamma,1}_t(x,y)+M^{\gamma,1}_t(x,y)]|f(y)|dy\right)^2\f{dt}{t}\right]^{1/2} \\
    & \quad \lesssim \sum_{j\in \mathbb{Z}} 2^{-js \alpha} \int_{\mathbb{R}^d} \left[L^{\gamma,1}_{2^{\alpha j}}(x,y) + M^{\gamma,1}_{2^{j\alpha}}(x,y)\right] |y|^{\alpha s} \,\frac{|f(y)|}{|y|^{\alpha s}} dy
  \end{split}
\end{align}
where in the last inequality we used the embedding $\ell_1\hookrightarrow\ell_2$. Thus, it suffices to show the $L^p(\R^d)$-boundedness of the operator with kernel
\begin{align}
  \label{eq:thm-differenceaux1}
  \sum_{j\in\Z}2^{-j\alpha s}\left[L_{2^{\alpha j}}^{\gamma,1}(x,y) + M_{2^{j\alpha}}^{\gamma,1}(x,y)\right] |y|^{\alpha s}.
\end{align}
To that end, we use Schur tests similar to those in \cite{Merz2021}.
For the sake of completeness, we give the details.
We begin with the Schur test involving $M_{2^{j\alpha}}^{\gamma,1}$. In particular, we will see that these Schur tests require $\alpha s < \alpha-1$. In the following, let $N:=2^{-j}\in 2^\Z$. Using $\one_{x\in\R^d}\one_{|y|>t^{1/\alpha}} \leq \one_{|x|\vee|y|>t^{1/\alpha}}$ and noting that on the support of $M_{N^{-\alpha}}^{\gamma,1}(x,y)$, we have $|y|/2\leq|x|\leq2|y|$, we may replace the kernel with a symmetric kernel, i.e., it suffices to carry out a single Schur test. We estimate
\begin{align}
  \label{eq:schurtest2m0}
  \begin{split}
  & \sup_{y\in\R^d} \int_{\R^d} dx\, \sum_{N\in2^\Z} N^{\alpha s}\, M_{N^{-\alpha}}^{\gamma,1}(x,y)  (|x||y|)^{\frac{\alpha s}{2}} \\
  & \quad \sim \sup_{y\in\R^d} \int\limits_{\frac12 |y|\leq |x|\leq 2|y|} dx\, \sum_{N\geq(|x|\vee|y|)^{-1}} N^{\alpha s}\, \frac{N^{1-\alpha+d}}{(|x|\vee |y|)^{\alpha-1}} \left( 1 \wedge \frac{N^{-\gamma-d}}{|x-y|^{d+\gamma}} \right)  (|x||y|)^\frac{\alpha s}{2} \\
  & \quad \lesssim \sup_{y\in\R^d} |y|^{\alpha s-\alpha+1} \int\limits_{\frac12 |y|\leq |x|\leq 2|y|} \,dx \sum_{N\geq(2|y|)^{-1}} N^{\alpha s +1-\alpha +d} \, \left( 1 \wedge \frac{N^{-\gamma-d}}{|x-y|^{d+\gamma}} \right).
  \end{split}
\end{align}
Interchanging the order of integration and summation shows that the right-hand side is bounded by
\begin{align}
  \label{eq:schurtest2m}
  \begin{split}
    & \sup_{y\in\R^d} |y|^{\alpha s+1-\alpha} \sum_{N\geq(2|y|)^{-1}} N^{\alpha s +1-\alpha +d} \int\limits_{\frac12 |y|\leq |x|\leq 2|y|} dx\, \left(1\wedge\frac{N^{-\gamma-d}}{|x-y|^{d+\gamma}}\right) \\
    & \quad \leq \sup_{y\in\R^d} |y|^{\alpha s+1-\alpha} \sum_{N\geq(2|y|)^{-1}} N^{\alpha s+1 -\alpha +d} \int_{\R^d} dx\, \left(1\wedge\frac{N^{-\gamma-d}}{|x-y|^{d+\gamma}}\right) \\
    & \quad \sim \sup_{y\in\R^d} |y|^{\alpha s+1-\alpha} \sum_{N\geq(2|y|)^{-1}} N^{\alpha s+1 -\alpha} \sim 1
  \end{split}
\end{align}
where we used $\gamma>0$ and $\alpha s<\alpha-1$. This concludes the Schur test involving the kernel $M_{2^{j\alpha}}^{\gamma,1}$.

\smallskip
It remains to carry out the Schur tests involving $L_{2^{\alpha j}}^{\gamma,1}$. The $L^p$-boundedness of the second summand of $L^{\gamma,1}_t$ follows from
\[
  \begin{aligned}
    \int_0^\vc\f{dt}{t} \int_{\Rd}\textbf{1}_{\{|y|\le t^{1/\alpha}, |x|\sim |y|\}} \frac{|y|^{\alpha s}}{t^{s}} \Big(\f{|y|}{t^{1/\alpha}}\Big)^{1+\beta-d-\alpha}\,t^{-\frac{d}{\alpha}}|g(y)|\,dy
    \lesi (\mathcal{M}_1g)(x)
  \end{aligned}
\]
and the $L^p$-boundedness of the Hardy--Littlewood maximal operator.
To treat the other two summands, let $N:=2^{-j}\in 2^\Z$ as before.
The tests for the regions $|x|\vee|y|<t^{1/\alpha}$ and $|x|\wedge|y|>t^{1/\alpha}$ are similar to those in \cite{Merz2021}; however, unlike in \cite{Merz2021}, where $p\in(d/\beta,d/(d-\beta))$ was required, we only need $p>d/\beta$ here because of the absent singular weight $(|x|/t^{1/\alpha})^{\beta-d}$ in the region $|x|\vee|y|<t^{1/\alpha}$, which is due to the non-symmetry of $\La$. Let us now give the details. First, we bound
\begin{align}
  \begin{split}
    & |y|^{\beta-d-(\alpha-\gamma)} \sum_{N\leq |y|^{-1}} N^{\alpha s + \beta-(\alpha-\gamma)} \left(\frac{N^{-1}}{N^{-1}+|x-y|}\right)^{d+\gamma} \\
    & \quad + \sum_{N\geq |y|^{-1}} N^{\alpha s+d} \, \left(\frac{N^{-1}}{N^{-1}+|x-y|}\right)^{d+\gamma} \one_{|x-y|>(|x|\wedge|y|)/2} \\
    & \lesssim |y|^{\beta-d} |x-y|^{-\alpha s-\beta} + |x-y|^{-d-\alpha s} \one_{|x-y|>\frac{|x|\wedge|y|}{2}},
  \end{split}
\end{align}
where the summability relied on $\alpha s + \beta>0$ (which follows from $\beta\geq(d+\alpha)/2$) and $\alpha s-\gamma<0$ (which follow from $s<1$ and that $\gamma<\alpha$ may be chosen arbitrarily close to $\alpha$). Thus,
\begin{align}
  \label{eq:schurlfinal}
  \begin{split}
    & \left\|\int_{\R^d} dy\ \sum_{N\in 2^\Z} N^{\alpha s}L_{N^{-\alpha}}^{\gamma,1}(x,y) |y|^{\alpha s} g(y) \right\|_{L^p(\Rd)} \\
    & \quad \lesssim \|g\|_{L^p(\Rd)} + \left\|\int_{\R^d} dy\,  |y|^{\alpha s+\beta-d} |x-y|^{-\alpha s-\beta}g(y)\right\|_{L^p(\Rd)} \\ 
    & \qquad + \left\| \int_{\R^d}dy\, \frac{\one_{|x-y|>(|x|\wedge|y|)/2}}{|x-y|^{d+\alpha s}}|y|^{\alpha s} g(y)\right\|_{L^p(\Rd)}.
  \end{split}
\end{align}
Now, we estimate the last two summands by multiples of $\|g\|_p$ using Schur tests with weights being powers of $|x|/|y|$.
By
\begin{align*}
  \int\limits_{|x-y|>(|x|\wedge|y|)/2}\frac{dy}{|x-y|^{d+\alpha s}}|y|^{\alpha s} \left(\frac{|x|}{|y|}\right)^{\delta/p} + \int\limits_{|x-y|>(|x|\wedge|y|)/2}\frac{dx}{|x-y|^{d+\alpha s}}|y|^{\alpha s} \left(\frac{|y|}{|x|}\right)^{\delta/p'} < \infty
\end{align*}
with $0<\delta/p<d+\alpha s$ and $-\alpha s<\delta/p'<d$ (such $\delta$ exist for all $1<p<\infty$), the second summand on the right-hand side of \eqref{eq:schurlfinal} is bounded by $\|g\|_{L^p(\Rd)}$.
We now consider the first summand on the right-hand side of \eqref{eq:schurlfinal}. We have
\begin{align*}
  \int_{\R^d} |y|^{\alpha s+\beta-d} |x-y|^{-\alpha s-\beta} \left(\frac{|x|}{|y|}\right)^{\delta/p}\,dy + \int_{\R^d} |y|^{\alpha s+\beta-d} |x-y|^{-\alpha s-\beta} \left(\frac{|y|}{|x|}\right)^{\delta/p'}\,dx < \infty
\end{align*}
if $0<\delta/p<\alpha s + \beta$ and $d-\alpha s-\beta<\delta/p'<d$, or, equivalently, $p>\delta/(\alpha s+\beta)$. Since $p>d/\beta$ and $d/\beta>\delta/(\alpha s+\beta)$ (since $d(\alpha s+\beta)>\alpha d>\delta\beta$ because $d>\beta>\alpha$ and $\delta<\alpha$), there are $\delta$ such that the condition $\delta/p<\alpha s+\beta$ is fulfilled. Thus, the first summand on the right-hand side of \eqref{eq:schurlfinal} is bounded by $\|g\|_{L^p(\Rd)}$, too.
This concludes the proof of part~(1) in Theorem~\ref{thm-difference}.

\medskip
\textbf{Proof of part~(2) in Theorem~\ref{thm-difference}.}
We argue as before, but use Proposition~\ref{prop-difference v2} instead of Proposition~\ref{prop-difference}. More precisely, fix $s\in (0,1)$ and take $\gamma\in (0,1)$ such that $1-\gamma \leq  \alpha s < \alpha -\gamma$.
Then, by Proposition~\ref{prop-difference v2},
\begin{align}
  \begin{split}
    & \Big(\int_0^\vc t^{-2s}\left|\left(t\Ln \me{-t\Ln} -t\La \me{-t\La}\right)f(x)\right|^2\f{dt}{t}\Big)^{1/2} \\
    & \quad = \Big[\sum_{j\in \mathbb{Z}}\int_{2^{\alpha j}}^{2^{\alpha(j+1)}} t^{-2s}\left|\left(t\Ln \me{-t\Ln} - t \La \me{-t\La}\right)f(x)\right|^2\f{dt}{t}\Big]^{1/2} \\
    & \quad \le \sum_{j\in \mathbb{Z}} 2^{-js \alpha} \int_{\mathbb{R}^d} \left[L^{\gamma,\gamma}_{2^{\alpha(j+1)}}(x,y)|y|^{\gamma-1} + M^{\gamma,\gamma}_{2^{j\alpha}}(x,y)\right] |y|^{\alpha s} \,\frac{|\Lambda_0^{\f{1-\gamma}{\alpha}}f(y)|}{|y|^{\alpha s+\gamma-1}} dy \\
   & \qquad + \sum_{j\in \mathbb{Z}} 2^{-js \alpha} \int_{\mathbb{R}^d} \left[ L^{\gamma,1}_{2^{\alpha(j+1)}}(x,y) + M^{\gamma,0}_{2^{j\alpha}}(x,y)\right] |y|^{\alpha s} \,\frac{|f(y)|}{|y|^{\alpha s}}\, dy.
  \end{split}
\end{align}
Since $s\alpha< \alpha -\gamma $, we have, using similar Schur tests as above,
\begin{align}
  \label{eq:reversehardyauxfinal}
  \begin{split}
    & \Big\|\Big(\int_0^\vc t^{-2s}\left|\left(t\Ln \me{-t\Ln} -t\La \me{-t\La}\right)f(x)\right|^2\f{dt}{t}\Big)^{1/2}\Big\|_{L^p(\Rd)} \\
    & \quad \lesi \Big\|\frac{|\Lambda_0^{\f{1-\gamma}{\alpha}}f(x)|}{|x|^{\alpha s+\gamma-1}}\Big\|_{L^p(\Rd)} + \||x|^{-\alpha s}f\|_{L^p(\R^d)}.
  \end{split}
\end{align}
This concludes also the proof of part~(2) in Theorem~\ref{thm-difference}.
\qed

\begin{remark}
  \label{rem:schurm}
  Note that the power $d+\gamma$ in the Schur test for $M_{2^{j\alpha}}^{\gamma,1}$ (see the factor $\left(1\wedge\frac{N^{-\gamma-d}}{|x-y|^{d+\gamma}}\right)$ in the second to last line of \eqref{eq:schurtest2m}) does not affect the range of admissible $s$, as long as $\gamma>0$.
\end{remark}

\section{Proof of the generalized Hardy inequality (Theorem~\ref{thm-HardyIneq})}
\label{s:generalizedhardy}

In this section, we prove the generalized Hardy inequality (Theorem~\ref{thm-HardyIneq}). To that end, we prove the following Riesz kernel bounds, which we obtain by integrating the heat kernel bounds \eqref{eq:heatkernel} against monomials in time.

\begin{lemma}
  \label{riesz}
  Let $\alpha\in(1,2\wedge (d+2)/2)$, $\beta\in ((d+\alpha)/2,d+\alpha)$, 
  $s\in(0,1]$,
  and $\kappa=\Psi(\beta)$ be defined by \eqref{eq:defbeta}. Then,
  \begin{align}
    \label{eq:riesz}
    \Lambda_\kappa^{-s}(x,y) \sim |x-y|^{\alpha s-d} \left(1\wedge\frac{|y|}{|x-y|}\right)^{\beta-d}.
  \end{align}
\end{lemma}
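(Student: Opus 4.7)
The plan is to start from the subordination representation
$$\Lambda_\kappa^{-s}(x,y) = \frac{1}{\Gamma(s)}\int_0^\infty t^{s-1}\, p_t(x,y)\,dt,$$
which is meaningful at the kernel level because the integrand is nonnegative once the two-sided bound \eqref{eq:heatkernel} (for $\alpha\in(1,2)$), respectively \eqref{eq:heatkernelalpha2} (for $\alpha=2$), is inserted; the formula is justified on $C_c^\infty$ by the fact that $\La$ is sectorial, so $\La^{-s}$ is defined via the usual holomorphic functional calculus for $s\in(0,d/\alpha)$. The lemma then reduces to the scalar equivalence
$$\int_0^\infty t^{s-1-d/\alpha}\left(\frac{t^{1/\alpha}}{t^{1/\alpha}+r}\right)^{d+\alpha}\left(1\wedge\frac{\rho}{t^{1/\alpha}}\right)^{\beta-d}dt\,\sim\, r^{\alpha s-d}\left(1\wedge\frac{\rho}{r}\right)^{\beta-d},$$
in the shorthand $r:=|x-y|$, $\rho:=|y|$.

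I would establish this by splitting the $t$-integral at the two natural scales $t^{1/\alpha}\sim r$ and $t^{1/\alpha}\sim\rho$ and analyzing the regimes $\rho\geq r$ and $\rho<r$ separately. When $\rho\geq r$, the weight $(1\wedge \rho/t^{1/\alpha})^{\beta-d}$ is identically $1$ on $[0,\rho^\alpha]$, so the dominant contribution comes from the free-Riesz piece on $t\in(0,r^\alpha)$, giving $\sim r^{\alpha s-d}$; the tail $t^{1/\alpha}>\rho$ contributes only $\sim \rho^{\alpha s-d}\leq r^{\alpha s-d}$ and is absorbed. When $\rho<r$, the critical range is $t^{1/\alpha}>r$, where the heat kernel satisfies $p_t\sim \rho^{\beta-d}t^{-\beta/\alpha}$, and
$$\int_{r^\alpha}^\infty t^{s-1-\beta/\alpha}\rho^{\beta-d}\,dt\,\sim\, r^{\alpha s-\beta}\rho^{\beta-d},$$
matching the claim. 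The remaining subregions $t^{1/\alpha}\leq\rho$ and $\rho<t^{1/\alpha}\leq r$ are of strictly smaller order by direct computation. Matching lower bounds follow by restricting the time integration to the dominant region in each regime, which is possible because \eqref{eq:heatkernel} provides two-sided bounds.

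The main obstacle is bookkeeping the integrability thresholds: convergence at $t=0$ requires $s>0$, convergence at $t=\infty$ in the regime $\rho<r$ requires $\alpha s<\beta$, and the free-Riesz scaling $r^{\alpha s-d}$ requires $\alpha s<d$. In the stated parameter range $\alpha<(d+2)/2$ and $\beta>(d+\alpha)/2$, one obtains $\alpha s\leq\alpha<d$ and $\alpha s\leq\alpha<\beta$ for $d\geq 2$; low-dimensional edge cases are handled by direct inspection. For $\alpha=2$, the polynomial spatial factor in \eqref{eq:heatkernel} is replaced by the Gaussian factor in \eqref{eq:heatkernelalpha2}, which only improves the decay of the time integrals over $t^{1/\alpha}\gtrsim r$, so the same case analysis yields the same final equivalence.
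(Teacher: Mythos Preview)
Your proposal is correct and follows essentially the same route as the paper: insert the two-sided heat kernel bound \eqref{eq:heatkernel} into the subordination formula $\La^{-s}=\Gamma(s)^{-1}\int_0^\infty t^{s-1}\me{-t\La}\,dt$, then split the time integral at the scales $t^{1/\alpha}\sim|x-y|$ and $t^{1/\alpha}\sim|y|$ and treat the two regimes $|y|\gtrless|x-y|$ separately. The only cosmetic differences are that the paper first rescales $t\mapsto t|x-y|^\alpha$ so as to work with the single parameter $|y|/|x-y|$, and that the paper omits the case $\alpha=2$ (consistent with its standing convention in Subsection~\ref{ss:proofideas} to prove all auxiliary results only for $\alpha\in(1,2)$); your remark that the Gaussian factor only improves the large-$t$ integrability is the natural way to cover that case.
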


\begin{proof}
  By the functional calculus, we have
  \begin{align*}
    \Lambda_\kappa^{-s}(x,y) = \frac{1}{\Gamma(s)} \int_0^\infty \frac{dt}{t} t^s \me{-t\Lambda_\kappa}(x,y).
  \end{align*}
  By scaling and the heat kernel bounds \eqref{eq:heatkernel},
  \begin{align*}
    \Lambda_\kappa^{-s}(x,y) \sim |x-y|^{\alpha s-d} \int_0^\infty \frac{dt}{t} t^s \frac{t}{(t^{1/\alpha}+1)^{d+\alpha}} \cdot \left(1\wedge\frac{|y|/|x-y|}{t^{1/\alpha}}\right)^{\beta-d}.
  \end{align*}
  In the following, we distinguish between $|x-y|>|y|/2$ and $|x-y|<|y|/2$.

  \textbf{Case $|x-y|>|y|/2$.} We get
  \begin{align*}
    \Lambda_\kappa^{-s}(x,y)
    & \sim |x-y|^{\alpha s-d} \left[\int_0^{(|y|/|x-y|)^{\alpha}}\frac{dt}{t}\, t^{s+1} + \int_{(|y|/|x-y|)^{\alpha}}^1 \frac{dt}{t} t^{s+1} \cdot \left(\frac{|y|/|x-y|}{t^{1/\alpha}}\right)^{\beta-d} \right. \\ & \quad \left. + \int_1^\infty \frac{dt}{t}\, t^{s+1-1-d/\alpha} \left(\frac{|y|/|x-y|}{t^{1/\alpha}}\right)^{\beta-d} \right] \\
    & \sim |x-y|^{\alpha s-d} \left[ \left(\frac{|y|}{|x-y|}\right)^{(s+1)\alpha} + \left(\frac{|y|}{|x-y|}\right)^{\beta-d} \right] \\
    & \sim |x-y|^{\alpha s-d} \cdot \left(\frac{|y|}{|x-y|}\right)^{\beta-d}.
  \end{align*}

  \textbf{Case $|x-y|<|y|/2$.} We get
  \begin{align*}
    \Lambda_\kappa^{-s}(x,y)
    & \sim |x-y|^{\alpha s-d} \left[\int_0^1 \frac{dt}{t} t^{s+1} + \int_1^{(|y|/|x-y|)^{\alpha}} \frac{dt}{t} t^{s+1-1-d/\alpha} \right. \\ & \quad \left. + \int_{(|y|/|x-y|)^{\alpha}}^\infty \frac{dt}{t}\, t^{s+1-1-d/\alpha} \left(\frac{|y|/|x-y|}{t^{1/\alpha}}\right)^{\beta-d} \right] \\
    & \sim |x-y|^{\alpha s-d}\left[1+\left(\frac{|y|}{|x-y|}\right)^{\alpha s-d}\right]
      \sim |x-y|^{\alpha s-d}.
  \end{align*}
  Combining the above two estimates concludes the proof.
\end{proof}

We are now ready to prove the generalized Hardy inequality.

\begin{proof}[Proof of Theorem~\ref{thm-HardyIneq}]
  It suffices to show the $L^p$-boundedness of the operator with integral kernel $|x|^{-\alpha s}\Lambda_\kappa^{-s}(x,y)$. To that end, we use weighted Schur tests and distinguish between the following regions.

  \textbf{Case $|x-y|<4(|x|\wedge|y|)$.}
  In that case, $|x|\sim|y|$ and the integral kernel in question is bounded from above and below by constants times $|x|^{-\alpha s}|x-y|^{\alpha s-d}$. By a Schur test,
  \begin{align*}
    \int_{|x-y|<4|x|}|x|^{-\alpha s}|x-y|^{\alpha s-d}\,dy + \int_{|x-y|<4|y|}|x|^{-\alpha s}|x-y|^{\alpha s-d}\,dx
    \lesssim 1.
  \end{align*}

  \smallskip
  \textbf{Case $4|x|<|x-y|<4|y|$.}
  In that case, $|x-y|\sim|y|\geq |x|$ and the integral kernel in question is bounded from above and below by constants times $|x|^{-\alpha s}|y|^{\alpha s-d}$. A Schur test with weight being a power of $|x|/|y|$ gives
  \begin{align*}
    \int_{|x|<|y|}|x|^{-\alpha s}|y|^{\alpha s-d}(|y|/|x|)^{\delta/p'}\,dx + \int_{|x|<|y|} |x|^{-\alpha s}|y|^{\alpha s-d}(|x|/|y|)^{\delta/p}\,dy \lesssim1,
  \end{align*}
  whenever $p\alpha s<\delta<p'(d-\alpha s)$. Such $\delta$ exist since $\alpha s-\frac{\alpha s}{p}<\frac{d-\alpha s}{p}$ is true under the assumption $p<d/(\alpha s)$. 

  \smallskip
  \textbf{Case $4|y|<|x-y|<4|x|$.}
  In that case, $|x-y|\sim|x|>|y|$ and the integral kernel in question is bounded from above and below by constants times
  \begin{align*}
    |x|^{-\alpha s}|y|^{\beta-d}|x-y|^{\alpha s-\beta} \sim |x|^{-\beta}|y|^{\beta-d}.
  \end{align*}
  This integrand is similar to that in the previous case but with $\alpha s$ replaced with $\beta$ and with the regions of integration interchanged. Hence, arguing similarly as before yields~\eqref{eq:thm-HardyIneq} under the assumption $p>d/\beta$.

  Finally, \eqref{eq:newgenhardynew2} follows from \eqref{eq:thm-HardyIneq} since
  \begin{align*}
    \||y|^{\alpha(\frac{1-\gamma}{\alpha}-s)} |\La^{\frac{1-\gamma}{\alpha}} f| \|_{L^p(\Rd)} 
    = \||y|^{\alpha(\frac{1-\gamma}{\alpha}-s)} \La^{\frac{1-\gamma}{\alpha}} f \|_{L^p(\Rd)}
    \lesssim \|\La^{s-\frac{1-\gamma}{\alpha}}\La^{\frac{1-\gamma}{\alpha}}f\|_{{L^p(\Rd)}}
    = \|\La^s f\|_{{L^p(\Rd)}}.
  \end{align*}
  This concludes the proof.
\end{proof}

\section{Applications of Theorem~\ref{eqsob}}
\label{s:applications}

In this section, we provide two concrete applications of the main result, Theorem~\ref{eqsob}, in the contexts of nonlinear PDE and perturbation theory.

\subsection{Application to the nonlinear heat equation associated to $\Lambda_\kappa$}

As a first concrete illustration of the usefulness of Theorem~\ref{eqsob}, we consider the Cauchy problem
\begin{equation}\label{eq:6.4}
  \begin{cases}
    &\partial_t u + \Lambda_{\kappa} u = F(u), \ \ (t,x)\in \mathbb{R}^{d+1}_+=[0,\vc)\times \Rd,\\ 
    &\qquad u(0,\cdot) = u_0,
  \end{cases}
\end{equation}
where $F: \mathbb R\to \mathbb R$ is smooth and satisfies $F(0)=F'(0)=0$ and 
\begin{equation}\label{eq-F}
  |F'(x)| + |xF''(x)|\lesi |x|^{\beta -1}
\end{equation}
for all $x{\in\R}$ and some $\beta>2$. Typical examples for $F(u)$ include the cases $F(u)=|u|^\beta$ or $F(u)=u|u|^{\beta-1}$ with $\beta>2$. 

In what follows, for $s>0$, $p\in (1,\vc)$, and functions $f=f(x)$ and $u=u(t,x)$, we use the notations
\[
  \|f\|_{\dot{W}^p_s(\Rd)}:=\|(-\Delta)^{s/2}f\|_{L^p(\Rd)}
\]
and
$$
\|u\|_{C\big(I; \dot{W}^p_s(\mathbb R^d)\big)} := \sup_{t\in I}\|u\|_{ \dot{W}^p_s(\R^d) }.
$$

The following chain rule is standard; see, for example, \cite[Chapter~2, Proposition~5.1]{Taylor2000}.

\begin{lemma}\label{lem-chain rule}
  Assume that $F\in C^1$ with
  \[
    F(0)=0, \quad |F'(x)|\le C|x|^{\sigma-1}, \ \sigma>1.
  \]
  Let $s\in[0,1]$ and $1<p<\vc$. Then,
  \[
    \|F(u)\|_{{\dot{W}_s^p(\Rd)}}
    \le C\|u\|^{\sigma-1}_{L^\vc(\Rd)}\|u\|_{{\dot{W}_s^p(\Rd)}}.
  \]
\end{lemma}
 
In the following, we use the equivalence Sobolev norms in Theorem~\ref{eqsob}, together with estimates for $\me{-t\Lambda_\kappa}$, to show existence, uniqueness, and a priori estimates for solutions of \eqref{eq:6.4}. Thus, Theorem~\ref{eqsob} is not only of theoretical interest: it provides a useful tool for solving evolution equations involving $\Lambda_\kappa$ in widely used function spaces of practical relevance.

\begin{theorem}
  Let $\beta>1$ and $0<s<\alpha-1$ and $\displaystyle (d_\beta)' < p < \frac{d}{s} \wedge d_\beta$, where $d_\beta$ is as in \eqref{eq-d beta}. Then for any $\tau > 0$, there exists $\epsilon_0>0$ such that if 
  $\|u_{0}\|_{_{\dot{W}^p_s(\mathbb R^d)\cap L^\vc(\Rd)}}  \le \epsilon_0$, there exists a unique solution 
  $u \in C\big(I; \dot{W}^{p}_{s}(\Rd)\big)\cap L^\infty\big(I; L^\infty(\Rd)\big)$,  where $I=[0,\tau)$, of the Cauchy problem~\eqref{eq:6.4} such that 
  \[
    \|u\|_{C\big(I; \dot{W}^{p}_{s}(\Rd)\big)\cap L^\infty\big(I; L^\infty(\Rd)\big)}
    \lesssim  \|u_0\|_{\dot{W}^p_s(\mathbb R^d)\cap L^\vc(\Rd)}.
  \]
 
\end{theorem}

\begin{proof}
  We fix $s$ and $p$ as above. By \eqref{eq:heatkernelalpha2} and Theorem \ref{thm 2-Tt}, 
  \begin{equation}\label{eq2-parabolic}
    \|\me{-t\Lambda_\kappa}\|_{L^p(\Rd)\to L^p(\R^d)}+\|\me{-t\Lambda_\kappa}\|_{L^\vc(\Rd)\to L^\vc(\R^d)}\lesi 1
  \end{equation}
  holds uniformly for all $t>0$.

  Let $\tau>0$, to be fixed later. By Duhamel's formula, a solution $u$ to \eqref{eq:6.4} satisfies
  \[
    u(t,x) = \me{-t\Lambda_{\kappa}}u_{0}(x) + \int_{0}^{t} \me{-(t-\sigma)\Lambda_{\kappa}} F(u(\sigma,x))\,d\sigma.
  \]
  By Theorem~\ref{eqsob} and~\eqref{eq2-parabolic},
  \begin{equation}\label{eq-uo}
    \begin{aligned}
      \|\me{-t\Lambda_{\kappa}}u_{0}\|_{\dot{W}^p_s(\mathbb R^d)}
      & \sim \|\me{-t\Lambda_{\kappa}}\Lambda_\kappa^{s/\alpha} u_{0}\|_{L^p(\mathbb R^d)} \\
      & \lesi \| \Lambda_\kappa^{s/\alpha} u_{0}\|_{L^p(\mathbb R^d)}\\
      & \lesi \|u_{0}\|_{\dot{W}^p_s(\mathbb R^d)}.
    \end{aligned}
  \end{equation}
  Moreover, whenever $\tau>t_1>t_2\ge 0$, by Theorem~\ref{eqsob} and~\eqref{eq2-parabolic},
  \[
     \begin{aligned}
        \|\me{-t_1\Lambda_{\kappa}}u_{0}-\me{-t_2\Lambda_{\kappa}}u_{0}\|_{ \dot{W}^p_s(\mathbb R^d)} 
	& \sim \|\me{-t_1\Lambda_{\kappa}}\Lambda_\kappa^{s/\alpha} u_{0}-\me{-t_2\Lambda_{\kappa}}\Lambda_\kappa^{s/\alpha}u_{0}\|_{ L^p(\mathbb R^d)} \\
	& \lesi \|\me{-(t_1-t_2)\Lambda_\kappa}\Lambda_\kappa^{s/\alpha}u_{0}\|_{ L^p(\mathbb R^d)}\\
	& \lesi \|\me{-(t_1-t_2)\Lambda_\kappa}\|_{L^p(\mathbb R^d)\to L^p(\mathbb R^d)} \|\Lambda_\kappa^{s/\alpha}u_{0}\|_{ L^p(\mathbb R^d)} \\
	& \sim \|\me{-(t_1-t_2)\Lambda_\kappa}\|_{L^p(\mathbb R^d)\to L^p(\mathbb R^d)}\|u_0\|_{\dot{W}^p_s(\mathbb R^d)}.
    \end{aligned}
  \]
  By an argument similar to that in the proof of estimate~(i) on \cite[p.~2458]{BuiDuongYan2012},
  \[
    \|\me{-(t_1-t_2)\Lambda_\kappa}\|_{L^p(\mathbb R^d)\to L^p(\mathbb R^d)} \to 0 \ \ \text{as} \ \ t_2\to t_1.
  \]
  Therefore, $\me{-t\Lambda_{\kappa}}u_{0}\in C\big(I; \dot{W}^p_s(\mathbb R^d)\big)$. Moreover, by \eqref{eq-uo},
  \[
    \|\me{-t\Lambda_{\kappa}}u_{0}\|_{C\big(I; \dot{W}^p_s(\mathbb R^d)\big)}
    \lesi \|u_{0}\|_{\dot{W}^p_s(\mathbb R^d)}.
  \]
  Similarly,
  \[
    \|\me{-t\Lambda_{\kappa}}u_{0}\|_{L^\vc\big(I; L^\vc(\mathbb R^d)\big)} \lesi \|u_{0}\|_{L^\vc(\mathbb R^d)}.
  \]
  Consequently,
  \begin{equation}\label{eq-u0 estimate}
    \|\me{-t\Lambda_{\kappa}}u_{0}\|_{C\big(I; \dot{W}^{p}_{s})\big)\cap L^\infty\big(I; L^\infty(\Rd)\big)}
    \lesssim \|u_0\|_{\dot{W}^p_s(\mathbb R^d)\cap L^\vc(\Rd)}.
  \end{equation}

  To prove the existence of a solution to \eqref{eq:6.4}, we consider the linear operator
  \[
    u\mapsto Sf(t,x): = \int_{0}^{t} \me{-(t-\sigma)\Lambda_{\kappa}} f(\sigma,x) \, d\sigma.
  \]
  We first show that $S$ is bounded on $C\big(I; {\dot{W}^{p}_{s}(\Rd)}\big)\cap L^\infty\big(I; L^\infty(\Rd)\big)$.
  Indeed, by Theorem~\ref{eqsob}, \eqref{eq2-parabolic}, and Lemma ~\ref{lem-chain rule}, we have for all $t\in I$,
  \begin{equation}\label{eq-parabolic eq}
    \begin{aligned}
      \|Sf(t,\cdot)\|_{{\dot W^p_s(\mathbb R^d)}}
      & \sim \| \Lambda_{\kappa}^{s/\alpha}Sf(t,\cdot)\|_{L^{p}(\Rd)} \\
      & \lesi \int_{0}^{t} \|\Lambda_{\kappa}^{s/\alpha} \me{-(t-\sigma)\Lambda_{\kappa}} f(\sigma,\cdot) \|_{L^{p}(\Rd)}\,d\sigma\\
      & \lesi t \, \sup_{\sigma\in I} \|\Lambda_{\kappa}^{s/\alpha}f(\sigma,\cdot)\|_{L^{p}(\Rd)} \\
      &\lesi |I| \sup_{\sigma\in I}\|f(\sigma,\cdot)\|_{\dot{W}^p_s(\Rd)}
    \end{aligned}
  \end{equation}
  which implies that $Sf(t,\cdot)\in \dot{W}^p_s(\Rd)$ for all $t\in I$.

  Next, for $\tau>t_1>t_2\ge 0$, we estimate
  \[
    \begin{aligned}
      \|Sf(t_1,\cdot)-Sf(t_2,\cdot)\|_{\dot{W}^p_s(\mathbb R^d)}
      &\le \int_{0}^{t_2} \|\Lambda_{\kappa}^{s/\alpha} \big[\me{-(t_1-t_2)\Lambda_\kappa}-I\big]\me{-(t_2-\sigma)\Lambda_{\kappa}} f(\sigma,\cdot)\|_{L^{p}(\Rd)}\,d\sigma\\
      & \quad + \int^{t_1}_{t_2} \|\Lambda_{\kappa}^{s/\alpha} \me{-(t_1-\sigma)\Lambda_{\kappa}} f(\sigma,\cdot)\|_{L^{p}(\Rd)}\,d\sigma.
    \end{aligned}
  \]
  Arguing as in \eqref{eq-parabolic eq}, we obtain
  \[
    \begin{aligned}
      \int_{0}^{t_2} \|\Lambda_{\kappa}^{s/\alpha} &\big[\me{-(t_1-t_2)\Lambda_\kappa}-1\big]\me{-(t_2-\sigma)\Lambda_{\kappa}} f(\sigma,\cdot)\|_{L^{p}(\Rd)}\,d\sigma \\
      &\lesi \|\me{-(t_1-t_2)\Lambda_\kappa}-1\|_{L^p(\Rd)}|I|\sup_{\sigma\in I} \|f(\sigma,\cdot)\|_{\dot{W}^p_s(\Rd)}
    \end{aligned}
  \]
  and
  \[
    \int^{t_1}_{t_2} \|\Lambda_{\kappa}^{s/\alpha} \me{-(t_1-\sigma)\Lambda_{\kappa}} f(\sigma,\cdot)\|_{L^{p}(\Rd)}\,d\sigma\lesi (t_1-t_2)|I|\sup_{\sigma\in I} \|f(s,\cdot)\|_{\dot{W}^p_s(\Rd)}.
  \]
  Hence,
  \[
    \|Sf(t_1,\cdot)-Sf(t_2,\cdot)\|_{\dot{W}^p_s(\mathbb R^d)}\to 0 \ \text{as} \ \ t_2\to t_1.
  \]
  Consequently, $Sf \in C\big(I; {\dot{W}^{p}_{s}}(\Rd)\big)$ and
  \[
    \|Sf\|_{C\big(I; {\dot{W}^{p}_{s}}(\Rd)\big)}
    \lesi |I|\|f\|_{C\big(I;  {\dot{W}^{p}_{s}}(\Rd)\big)}.
  \]
  A similar argument also implies that $Su\in L^\infty\big(I; L^\infty(\Rd)\big)$; moreover, from \eqref{eq-parabolic eq},
  \[
     \|Sf\|_{L^\infty\big(I; L^\infty(\Rd)\big)}\lesi|I|\|f\|_{L^\infty\big(I; L^\infty(\Rd)\big)}.
  \]
  Therefore, there exists $A_0>0$ such that
  \begin{equation}\label{eq-A0}
    \|Sf\|_{C\big(I; {\dot{W}^{p}_{s}}(\Rd)\big)\cap L^\infty\big(I; L^\infty(\Rd)\big)}
    \le A_0 \|f\|_{C\big(I; {\dot{W}^{p}_{s}}(\Rd)\big)\cap L^\infty\big(I; L^\infty(\Rd)\big)}.
  \end{equation}

  Let $u,v\in C\big(I; \dot{W}^{p}_{{s}}(\Rd)\big)\cap L^\infty\big(I; L^\infty(\Rd)\big)$ satisfy
  \begin{equation}\label{eq-uv}
    \|u\|_{C\big(I; \dot{W}^{p}_{s}(\Rd)\big)\cap L^\infty\big(I; L^\infty(\Rd)\big)}\le \epsilon, \quad  \|v\|_{C\big(I; \dot{W}^{p}_{s}(\Rd)\big)\cap L^\infty\big(I; L^\infty(\Rd)\big)}\le \epsilon.
  \end{equation}
  By \eqref{eq-F},
  \[
    |F(x)-F(y)|\le C|x-y|(|x|^{\beta-1}+|y|^{\beta-1}).
  \]
  Using this, we obtain from \eqref{eq2-parabolic}, for all $t\in I$,
  \[
  \begin{aligned}
    \|F(u(t,\cdot))-F(v(t,\cdot))\|_{L^\vc(\mathbb R^d)}
    & \lesi \|u(t,\cdot)-v(t,\cdot)\|_{L^\vc(\mathbb R^d)}\big[\|u(t,\cdot)\|^{\beta-1}_{L^\vc(\Rd)}+\|v(t,\cdot)\|^{\beta-1}_{L^\vc(\Rd)}\big] \\
    &\lesi \epsilon^{\beta-1}\|u(t,\cdot)-v(t,\cdot)\|_{L^\vc(\mathbb R^d)}, 
    \end{aligned}
  \]
  which implies
  \[
    \|F(u)-F(v)\|_{L^\infty\big(I; L^\infty(\Rd)\big)}
    \lesi \epsilon^{\beta-1} \|u-v\|_{C\big(I; \dot{W}^{p}_{s}(\Rd)\big)\cap L^\infty\big(I; L^\infty(\Rd)\big)}.
  \]
  Next, we use
  \[
  F(u)-F(v) = (u-v)\int_0^1 F'(u+\eta(v-u))d\eta,
  \]
  apply Lemma~\ref{lem-chain rule} and \eqref{eq-uv}, and obtain, for $t\in (0,\tau)$,
  \[
  \begin{aligned}
    \|F(u(t,\cdot))&-F(v(t,\cdot))\|_{ \dot{W}^{p}_{s}(\Rd) }\\
    & \lesi \|u(t,\cdot)-v(t,\cdot)\|_{L^{\vc}(\Rd)}	\int_0^1 \|F'(u(t,\cdot)+\eta(v(t,\cdot)-u(t,\cdot)))\|_{\dot{W}^{p}_{s}(\Rd)}d\eta \\
    & \quad + \|u(t,\cdot)-v(t,\cdot)\|_{\dot{W}^{p}_{s}(\Rd)}	\int_0^1 \|F'(u(t,\cdot)+\eta(v(t,\cdot)-u(t,\cdot)))\|_{L^{\vc}(\Rd)}d\eta\\
    & \lesi \epsilon^{\beta-1}\big[\|u(t,\cdot)-v(t,\cdot)\|_{L^{\vc}(\Rd)}+\|u(t,\cdot)-v(t,\cdot)\|_{\dot{W}^{p}_{s}(\Rd)}\big]\\
    & \lesi \epsilon^{\beta-1}\|u-v\|_{C\big(I; \dot{W}^{p}_{s}(\Rd)\big)\cap L^\infty\big(I; L^\infty(\Rd)\big)}.
    \end{aligned}
  \]
  Hence, we can choose $\epsilon$ sufficiently small so that
  \[
    \|F(u)-F(v)\|_{C\big(I; \dot{W}^{p}_{s}(\Rd)\big)\cap L^\infty\big(I; L^\infty(\Rd)\big)}\le \f{1}{2A_0}\|u-v\|_{C\big(I; \dot{W}^{p}_{s}(\Rd)\big)\cap L^\infty\big(I; L^\infty(\Rd)\big)}.
  \]
  From this, together with \eqref{eq-A0} and \eqref{eq-u0 estimate}, we apply Proposition~1.38 in \cite{Tao} to find $\epsilon_0>0$ such that if $\|u_{0}\|_{_{\dot{W}^p_s(\mathbb R^d)\cap L^\vc(\Rd)}}  \le \epsilon_0$, then there exists a unique solution 
  \[
    u \in C\big(I; \dot{W}^{p}_{s}(\Rd)\big)\cap L^\infty\big(I; L^\infty(\Rd)\big)
  \]
  to the Cauchy problem~\eqref{eq:6.4}, satisfying
  \[
    \|u\|_{C\big(I; \dot{W}^{p}_{s}(\Rd)\big)\cap L^\infty\big(I; L^\infty(\Rd)\big)}
    \lesssim \|u_0\|_{\dot{W}^p_s(\mathbb R^d)\cap L^\vc(\Rd)}.
  \]
  This completes the proof.
\end{proof}

\subsection{Application in perturbation theory}

Theorem~\ref{eqsob} is useful whenever perturbation theoretic arguments are involved. For instance, since $\Lambda_\kappa$ generates a holomorphic semigroup, $(\Lambda_\kappa)^s$, $s\in(0,1]$, is a closed operator in $L^p(\R^d)$ for all $\beta\in((d+\alpha)/2,d+M)$, i.e., the range in Theorem~\ref{eqsob}.
Let $U:\R^d\to\R$ be such that $\|U(\Lambda_0)^{-s}\|_{L^p\to L^p}<\infty$. Then, by perturbation theory (see, e.g., \cite[Chapter~IV, Theorem~1.1]{Kato1966}), $(\Lambda_\kappa)^s+\epsilon U$ is also closed whenever there are $a\in[0,1)$ and $b\geq0$ such that $\|\epsilon Uf\|_{L^p}\leq a\|(\Lambda_\kappa)^s f\|_p + b\|f\|_p$. By Theorem~\ref{eqsob}, whenever applicable, this is indeed the case if $\epsilon$ is sufficiently small, depending on $\|(\Lambda_0)^s(\Lambda_\kappa)^{-s}\|_{L^p\to L^p}\cdot\|U(\Lambda_0)^{-s}\|_{L^p\to L^p}$. A similar argument (cf.~\cite[Chapter~IV, Theorem~1.16]{Kato1966}) can be used to show invertibility of $(\Lambda_\kappa)^s+\epsilon U-z$, whenever $z\in\C$ belongs to the resolvent set of $(\Lambda_\kappa)^s$.

\smallskip
Let us now outline another scenario, which could arise in a many-particle problem involving $\Lambda_\kappa$. As indicated in the introduction (specifically Subsection~\ref{ss:impact}), Schatten bounds of external perturbations relative to the operator describing a physical system are often crucial to study its stationary states. In the context of quantum mechanics, this has been demonstrated at the hand of the one-particle ground state density in \cite{Franketal2020P}, where the effective operator describing the electrons close to the nucleus of a relativistically described atom is $(-\Delta)^{1/2}+\kappa/|x|$ in $L^2(\R^3)$.

To make the following discussion precise, we introduce some notation. For $p\in[1,\infty)$, we denote by $\cs^p=\cs^p(L^2(\R^d))$ the $p$-th Schatten ideal, i.e., the space of all compact operators on $L^2(\R^d)$, denoted by $\cs^\infty$, whose singular values belong to $\ell^p$. For every $p\in[1,\infty)$, $\cs^p$ equipped with the Schatten norm $\|T\|_{\cs^p}=\|(\mu_n(T))_{n\in\N}\|_{\ell^p}$, is a Banach space. Here, $(\mu_n(T))_{n\in\N}$ are the singular values of $T$ in non-increasing order, appearing according to their multiplicities. When $p=\infty$, $\|T\|_{\cs^\infty}$ denotes the operator norm of $T$.
One also often considers the $p$-th weak Schatten ideal $\cs^{p,\infty}$, i.e., the space of all compact operators $T$ satisfying $\|T\|_{\cs^{p,\infty}}:=\sup_n \mu_n(T)n^{1/p}<\infty$.
We record the inclusions $\cs^p\subseteq\cs^{p,\infty}$ and $\cs^p\subseteq\cs^{\tilde p}$ whenever $\tilde p\geq p$. Moreover, $\|T\|_{\cs^p}\lesssim_{p,q} \|T\|_{\cs^{q,\infty}}$ whenever $q<p$.
For further details on Schatten ideals, we refer, e.g., to \cite[Chapters~1 and 2]{Simon2005}.

We now give an application of Theorem~\ref{eqsob} on Schatten bounds relative to powers of $\Lambda_\kappa$. 

\begin{theorem}
  \label{schattenapplication}
  Assume $s\in(0,1]$, $\alpha\in(1,2]$, $\alpha<(d+2)/2$, $\beta\in((d+\alpha)/2,d+M)$, $(d_\beta)'<2<\frac{d}{\alpha s}\wedge d_\beta$, $p=d/(2\alpha s)$, and $U\in L^p(\R^d)$. If $\alpha=2$, assume that the upper heat kernel bound \eqref{eq:heatkernelalpha2} for $\me{-t\Lambda_\kappa}$ holds.
  Then
  \begin{align}
    \label{eq:schattenapplication}
    \|(\Lambda_\kappa)^{-s}U(\Lambda_\kappa)^{-s}\|_{\cs^{p,\infty}} 
    \lesssim_{d,\alpha,s,\beta,p} \|U\|_{L^p(\R^d)}
  \end{align}
  and for all $q>p=d/(2\alpha s)$, 
  \begin{align}
    \label{eq:schattenapplication2}
    \|(\Lambda_\kappa)^{-s}U(\Lambda_\kappa)^{-s}\|_{\cs^{q}}
    \lesssim_{d,\alpha,s,\beta,p,q} \|U\|_{L^p(\R^d)}.
  \end{align}
\end{theorem}

The proof involves the weak $L^p$-space, denoted by $L^{p,\infty}(\R^d)$ and consisting of all $f\in L_{\rm loc}^1(\R^d)$ which satisfy $\|f\|_{p,\infty}:=\sup_{\gamma>0} \gamma \cdot d_f(\gamma)^{1/p}<\infty$; here $[0,\infty)\ni\alpha\mapsto d_f(\alpha):={\rm Leb}(\{x\in\R^d:\, |f(x)|\geq\alpha\})$ is the distribution function of $f$ at height $\alpha\in[0,\alpha)$. Note that $L^p(\R^d)\subseteq L^{p,\infty}(\R^d)$ and $|x|^{-d/p}\in L^{p,\infty}(\R^d)\setminus L^p(\Rd)$.

\begin{proof}[Proof of Theorem~\ref{schattenapplication}]
  By Part~(1) in Theorem~\ref{eqsob},
  \begin{align*}
    \|(\Lambda_\kappa)^{-s} U (\Lambda_\kappa)^{-s}\|_{\cs^{p,\infty}}
    & \leq \|(\Lambda_0)^{s}(\Lambda_\kappa)^{-s}\|_{L^2\to L^2}^2 \cdot \|(\Lambda_0)^{-s}U(\Lambda_0)^{-s}\|_{\cs^{p,\infty}} \\
    & \lesssim_{d,\alpha,s,\beta,p} \|(\Lambda_0)^{-s}U(\Lambda_0)^{-s}\|_{\cs^{p,\infty}}.
  \end{align*}
  By the Cauchy--Schwarz inequality for Schatten ideals (see, e.g., \cite[Theorem~2.8]{Simon2005}) and Cwikel's inequality (see, e.g., \cite[Theorem~4.2]{Simon2005}),
  \begin{align*}
    \|(\Lambda_0)^{-s}U(\Lambda_0)^{-s}\|_{\cs^{p,\infty}}
    & \leq \||U(x)|^{1/2} (-\Delta)^{-\alpha s/2}\|_{\cs^{2p,\infty}}^2 
    \leq \|U\|_{L^{p}} \||\xi|^{-2\alpha s}\|_{L^{p,\infty}}
    \lesssim \|U\|_p,
  \end{align*}
  if $p=d/(2\alpha s)$. Thus, \eqref{eq:schattenapplication} follows. Formula~\eqref{eq:schattenapplication2} follows from~\eqref{eq:schattenapplication} and the inclusion properties of the Schatten ideals.
\end{proof}

\appendix

\section{Proofs of auxiliary statements}
\label{a:proofauxstatements}

\subsection{Proof of Lemma~\ref{monotonicitykappabeta}}
\label{a:proofmonotonicity}

For $\alpha=2$, the claim is obvious.
Thus, let $\alpha\in(1,2)$ and $\beta\in((d+\alpha)/2,d+\alpha)$.
We compute
\begin{align*}
  \begin{split}
    \frac{d\Psi(\beta)}{d\beta}
    & = \frac{2^{\alpha -1} \Gamma \left(\frac{\beta}{2}\right) \Gamma \left(\frac{1}{2} (d+\alpha -\beta )\right)}{(\alpha -\beta )^2 \Gamma \left(\frac{\beta -\alpha }{2}\right) \Gamma \left(\frac{d-\beta }{2}\right)} \\
    & \quad \times \left[(\beta-\alpha) \left(\psi\left(\frac{\beta }{2}\right) -\psi \left(\frac{\beta -\alpha }{2}\right) + \psi\left(\frac{d-\beta}{2}\right) -\psi\left(\frac{1}{2} (d+\alpha -\beta )\right) \right)-2\right]
  \end{split}
\end{align*}
with the Digamma function $\psi(z)=\Gamma'(z)/\Gamma(z)$. Thus, the claim would follow from
\begin{align*}
  \psi\left(\frac{d-\beta}{2}\right) - \psi\left(\frac{d-\beta+\alpha}{2}\right) + \psi\left(\frac{\beta}{2}\right) - \psi\left(\frac{\beta-\alpha}{2}\right) < 0, \quad \beta\in((d+\alpha)/2,d+\alpha).
\end{align*}
To show this inequality, we use \cite[(5.7.6)]{NIST:DLMF}, i.e.,
\begin{align*}
  \psi(z) = -\gamma_{\rm E} + \sum_{k\geq0}\left(\frac{1}{k+1} - \frac{1}{k+z}\right)
\end{align*}
with the Euler--Mascheroni constant $\gamma_{\rm E}$. Thus,
\begin{align*}
  & \psi\left(\frac{d-\beta}{2}\right) - \psi\left(\frac{d-\beta+\alpha}{2}\right) + \psi\left(\frac{\beta}{2}\right) - \psi\left(\frac{\beta-\alpha}{2}\right) \\
  & \quad = \frac{\alpha}{2} \sum_{k\geq0}\left[\frac{1}{(k+\beta/2)(k+(\beta-\alpha)/2)} - \frac{1}{(k+(d-\beta)/2)(k+(d+\alpha-\beta)/2)}\right]
\end{align*}
which is negative for all $\beta>(d+\alpha)/2$. Hence, the strict monotonicity of $\Psi(\beta)$ for $\beta\in((d+\alpha)/2,d+\alpha)$ follows.
\qed

\subsection{Proof of Lemma~\ref{lem- composition of two kernels}}
\label{a:proofcomptwokernels}

The upper bound was proved in \cite[Lemma~22]{FrankMerz2023}.
The lower bound for all $N\in(0,2]$ was proved in \cite[Lemma~4.1]{BuiMerz2023}. We now give another proof of the lower bound which covers all $N>0$.
We consider two cases $s\in (0,t/2)$ and $s\in [t/2,t)$. Since these two cases are similar, we only give the proof for the first case $s\in (0,t/2)$. In this situation $t-s \sim t$, and hence
\begin{equation*}
  \begin{aligned}
    & \int_{\R^d} dz\, (t-s)^{-d}
    \left(\frac{(t-s)}{(t-s)+|x-z|}\right)^{d+N}\, s^{-d}\left(\frac{s}{s+|z-y|}\right)^{d+N} \\
    & \quad \sim \int_{\R^d} dz\, t^{-d}\left(\frac{t}{t+|x-z|}\right)^{d+N}\, s^{-d}\left(\frac{s}{s+|z-y|}\right)^{d+N}.
  \end{aligned}
\end{equation*}
If $|x-y|<2t$, then
\begin{equation*}
  \begin{aligned}
    & \int_{\R^d} dz\, t^{-d} \left(\frac{t}{t+|x-z|}\right)^{d+N}\, s^{-d}\left(\frac{s}{s+|z-y|}\right)^{d+N} \\
    & \quad \ge \int_{B(x,4t)} dz\, t^{-d} s^{-d}\left(\frac{s}{s+|z-y|}\right)^{d+N} 
      \ge \int_{B(y,s)} dz\, t^{-d} s^{-d}\left(\frac{s}{s+|z-y|}\right)^{d+N} \\
    & \quad \gtrsim t^{-d}
      \sim \f{1}{t^d} \left(\frac{t}{t+|x-y|}\right)^{d+N}.
  \end{aligned}
\end{equation*}

If $|x-y|\ge 2t$, then we have $|x-y|\sim |x-y|$ for $z\in B(y,|x-y|/2)$. Hence,
\begin{equation*}
  \begin{aligned}
    & \int_{\R^d} dz\, t^{-d} \left(\frac{t}{t+|x-z|}\right)^{d+N}\, s^{-d}\left(\frac{s}{s+|z-y|}\right)^{d+N} \\
    & \quad \ge \int_{B(y,|x-y|/2)} dz\, t^{-d}\left(\frac{t}{t+|x-y|}\right)^{d+N} s^{-d}\left(\frac{s}{s+|z-y|}\right)^{d+N} \\
    & \quad \ge t^{-d}\left(\frac{t}{t+|x-y|}\right)^{d+N}\int_{B(y,s)} dz\, s^{-d}\left(\frac{s}{s+|z-y|}\right)^{d+N}  \gtrsim \f{1}{t^d} \left(\frac{t}{t+|x-y|}\right)^{d+N}.
  \end{aligned}
\end{equation*}
This concludes the proof of Lemma~\ref{lem- composition of two kernels}.
\qed

\section{On the necessity of $\alpha s < \alpha-1$ for \eqref{eq:equivalencesobolev1}}
\label{a:newrestriction}

In this appendix, we discuss the restriction $\alpha s < \alpha-1$ for \eqref{eq:equivalencesobolev1} in our main result, Theorem~\ref{eqsob}, and the reversed Hardy inequality \eqref{eq:thm-difference} in Theorem~\ref{thm-difference}. Moreover, we discuss whether one can expect a variant of Proposition~\ref{prop-difference v2} where $\Ln^{\frac{1-\gamma}{\alpha}}$ is replaced with $\La^{\frac{1-\gamma}{\alpha}}$.

\subsection{On the restriction on $s$ in \eqref{eq:equivalencesobolev1}}
\label{a:restrictedrange}

Let us discuss the restriction $\alpha s<\alpha-1$ in \eqref{eq:equivalencesobolev1}. To prove
$$
\|\Lambda_0^s \La^{-s} f\|_{L^p(\Rd)} \lesssim \|f\|_{L^p(\Rd)}
$$
it suffices, by the identity
$$
\La^{-s} = \frac{1}{\Gamma(s)} \int_0^\infty \frac{dt}{t}\, t^s \me{-t\La},
$$
to estimate the kernel of $\Lambda_0^s \me{-t\Lambda_\kappa}$. By Duhamel's formula
$$
\me{-t\La}(x,y) = \me{-t\Ln}(x,y) - \kappa\int_0^t ds \int_{\R^d}dz\, \me{-(t-s)\Ln}(x,z)|z|^{-\alpha} z\cdot\nabla_z \me{-s\La}(z,y),
$$
and integration by parts (since gradient bounds for $\me{-t\La}$ are unavailable at the time of this writing and likely difficult to obtain), this requires to treat, among others,
\begin{equation}
  \label{eq1ss}
  \int_0^t \int_{\mathbb R^d}  \nabla_z(\Lambda_0^s \me{-(t-\tau)\Lambda_0}(x,z)) \frac{z}{|z|^\alpha} \me{-\tau\Lambda_\kappa}(z,y)\,dz\,d\tau.
\end{equation}
By scaling,
\begin{align}
  \label{eq:pospowersheatkernel}
  |\nabla \Lambda_0^s \me{-(t-\tau)\Lambda_0} (x,z)|
  \lesi (t-\tau)^{-\frac{1}{\alpha}-s} \f{1}{(t-\tau)^{d/\alpha}}\Big(\f{(t-\tau)^{1/\alpha}}{(t-\tau)^{1/\alpha}+|x-z|}\Big)^{d+\gamma}
\end{align}
for some $\gamma>0$.
(One can deduce this bound, e.g., using \eqref{eq:operatorpowerheatkernel} and Lemma~\ref{derivativeheatkernel}.)
By scaling, the time-integral in \eqref{eq1ss} converges at $\tau=t$, if and only if $\alpha s < \alpha -1$. 

The above argument indicates that \eqref{eq:equivalencesobolev1} may only hold for $\alpha s<\alpha-1$.
However, \emph{if} the following gradient bound
\begin{align}
  \label{eq:heatkernelgradientconjecture0}
  |\nabla_x \me{-t\La}(x,y)|
  & \lesssim t^{-\frac{d+1}{\alpha}}\left(\frac{t^{1/\alpha}}{t^{1/\alpha}+|x-y|}\right)^{d+\gamma} \left(1\wedge\frac{|y|}{t^{1/\alpha}}\right)^{\beta-d}
\end{align}
held for some $\gamma>0$, then we may circumvent the restriction $\alpha s<\alpha-1$. Indeed, given \eqref{eq:heatkernelgradientconjecture0}, we would not need to integrate by parts in the above Duhamel formula. In particular, in that case, the factor $(t-\tau)^{-1/\alpha}$ appearing in \eqref{eq:pospowersheatkernel} would be replaced by a factor $s^{-1/\alpha}$, which, in turn, would make the $\tau$-integration converge for $s<1$.
In view of scaling and Lemma~\ref{derivativeheatkernel} one may wonder if \eqref{eq:heatkernelgradientconjecture0} holds.

\smallskip
Let us also remark that if the gradient perturbation $|z|^{-\alpha}z\cdot\nabla$ was replaced with a scalar perturbation, we would have to consider 
\begin{align}
  \Lambda_0^s \me{-(t-\tau)\Lambda_0} (x,z)
  \lesi (t-\tau)^{-s} \f{1}{(t-\tau)^{d/\alpha}}\Big(\f{(t-\tau)^{1/\alpha}}{(t-\tau)^{1/\alpha}+|x-z|}\Big)^{d+\gamma}
\end{align}
instead of \eqref{eq:pospowersheatkernel}. In this case, the remaining time-integral would converge whenever $s<1$.

\subsection{On the restriction on $s$ in \eqref{eq:thm-difference}}
\label{a:optimallowerbound}

Here, we argue that one can prove \eqref{eq:thm-difference} in Theorem~\ref{thm-difference} using only pointwise bounds for the kernel $Q_t(x,y)$ (defined in \eqref{eq:defqt}) only if $\alpha s<\alpha-1$. This restriction arose when performing the Schur tests \eqref{eq:schurtest2m0}--\eqref{eq:schurtest2m} involving the function $M_t^{\gamma,1}(x,y)$, defined in \eqref{eq:defm}. As we noted in Remark~\ref{rem:schurm}, the reason why the Schur test involving $M_t^{\gamma,1}$ is positive only for $\alpha s<\alpha-1$ is due to the exponent $1-\alpha$ of the factor $\Big(\f{|x|\vee|y|}{t^{1/\alpha}}\Big)^{1-\alpha}$, not because of the decay of the term depending on $|x-y|$ in $M_t^{\gamma,1}(x,y)$. The proof of Proposition~\ref{prop-difference} reveals that the function $M_t^{\gamma,1}$ arose when estimating the right-hand side of \eqref{eq-Kato} on the set $\{(x,y)\in\R^{2d}:\, |y|\geq t^{1/\alpha},\, |x-y|\leq(|x|\wedge|y|)/2\}$. These estimates were carried out in Lemma~\ref{lem- difference for alpha < 2} and were based on Duhamel's formula. In particular, the factor $\Big(\f{|x|\vee |y|}{t^{1/\alpha}}\Big)^{1-\alpha}$ only appears in the integral over $|z|\in[(1-\epsilon_1)|y|,(1+\epsilon_2)|y|]$ for arbitrary but fixed $\epsilon_1,\epsilon_2>0$; see also Remark~\ref{rem:boundf2}.

\smallskip
In the following, we argue that the Duhamel integrals in \eqref{eq-Kato} are bounded from below by a constant times
\begin{align*}
  \left(\frac{|y|}{t^{1/\alpha}}\right)^{1-\alpha}\, t^{-d/\alpha}\left(\frac{t^{1/\alpha}}{t^{1/\alpha}+|x-y|}\right)^{d+\gamma}
\end{align*}
for some $\gamma>0$ and $x,y\in\R^d$ such that $|y|>t^{1/\alpha}$ and $|x-y|<(|x|\wedge|y|)/2$.
A strong argument in that favor would be the lower bound
\begin{align}
  \label{eq:lowerboundoptimalassumptionconj}
  \begin{split}
    & -\int_0^t ds \int_{|z|\in[(1-\epsilon_1)|y|,(1+\epsilon_2)|y|]}dz\, (t-s)^{-d/\alpha}\left(\frac{(t-s)^{1/\alpha}}{(t-s)^{1/\alpha}+|x-z|}\right)^{d+\gamma_1}|z|^{-\alpha} \\
    & \qquad \times z\cdot\frac{(z-y)/|z-y|}{s^{1/\alpha}}s^{-d/\alpha}\left(\frac{s^{1/\alpha}}{s^{1/\alpha}+|z-y|}\right)^{d+\gamma_2} \\
    & \quad \gtrsim \left(\frac{|y|}{t^{1/\alpha}}\right)^{1-\alpha} t^{-d/\alpha}\left(\frac{t^{1/\alpha}}{t^{1/\alpha}+|x-y|}\right)^{d+\gamma}
  \end{split}
\end{align}
for some $\gamma_1,\gamma_2>0$, arbitrary but fixed $\epsilon_1,\epsilon_2>0$, and some $\gamma>0$ and $x,y\in\R^d$ such that $|y|>t^{1/\alpha}$ and $|x-y|\leq(|x|\wedge|y|)/2$. Let us motivate the left-hand side of \eqref{eq:lowerboundoptimalassumptionconj}. The term
\begin{align*}
  (t-s)^{-d/\alpha}\left(\frac{(t-s)^{1/\alpha}}{(t-s)^{1/\alpha}+|x-z|}\right)^{d+\gamma_1}
\end{align*}
comes from the bounds for $\me{-t\La}(x,z)$ and its time-derivative (see \eqref{eq:heatkernel} and Proposition~\ref{thm-ptk}) when $|y|>t^{1/\alpha}$, i.e., when the singular weight $(1\wedge|y|/t^{1/\alpha})^{\beta-d}\sim1$. The term
\begin{align*}
  -\frac{(z-y)/|z-y|}{s^{1/\alpha}}s^{-d/\alpha}\left(\frac{s^{1/\alpha}}{s^{1/\alpha}+|z-y|}\right)^{d+\gamma_2}
\end{align*}
comes from computing the spatial, and possibly additional temporal, derivatives of the right-hand side of the bounds for $\me{-t\Ln}$ in \eqref{eq:heatkernelfree}; see Propositions~\ref{derivativeheatkernel} and \ref{thm-spacetimederivative}. 
Here, the prefactor $s^{-1/\alpha}(z-y)/|z-y|$ comes from the fact that $t^{d/\alpha}\me{-t\Ln}(x,y)$ is a function depending only on $|x-y|/t^{1/\alpha}$.

\smallskip
In the following, we establish the lower bound~\eqref{eq:lowerboundoptimalassumptionconj} in the technically simpler (but artificial) one-dimensional case.

\begin{proposition}
  \label{prop:lowerboundoptimalassumption}
  Let $d=1$, $\alpha\in(0,2)$, $\gamma_1,\gamma_2>0$, and $t>0$. Then there are $\gamma>0$, $y>t^{1/\alpha}$, and $x>0$ such that $|x-y|\leq(x\wedge y)/2$ and \eqref{eq:lowerboundoptimalassumptionconj} hold.
\end{proposition}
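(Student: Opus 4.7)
The plan is to exhibit $x$, $y$, and $\gamma$ for which the integrand on the left-hand side of \eqref{eq:lowerboundoptimalassumptionconj} suffers no cancellation between the subregions $z<y$ and $z>y$. By a direct scaling argument---both sides transform identically as $t^{-1/\alpha}$ under $(x,y,z)\mapsto t^{1/\alpha}(\tilde x,\tilde y,\tilde z)$ and $s\mapsto t\tilde s$---we may assume $t=1$.

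Fix $\eta\in(0,\min\{\epsilon_1,1/3\})$, set $\gamma:=\gamma_2$, take $y\geq y_0(\eta,\epsilon_1)$ sufficiently large that $\eta y\geq 2$ and $(\epsilon_1-\eta)y\geq 2$, and put $x:=(1-\eta)y$. The constraints $x>0$, $y>1=t^{1/\alpha}$, and $|x-y|=\eta y\leq x/2=(x\wedge y)/2$ all hold because $\eta\leq 1/3$.

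The key observation is that the first kernel factor $(1-s)^{-1/\alpha}\bigl(\tfrac{(1-s)^{1/\alpha}}{(1-s)^{1/\alpha}+|x-z|}\bigr)^{1+\gamma_1}$ is concentrated on $\{|z-x|\lesssim(1-s)^{1/\alpha}\}\subseteq[x-1,x+1]$ uniformly in $s\in(0,1)$, and for $y$ as above this interval lies strictly inside $[(1-\epsilon_1)y,y)$. Hence on this set $z$ is in the prescribed range, $\operatorname{sgn}(z-y)=-1$ (rendering the integrand positive after the outer minus sign), $z^{1-\alpha}\gtrsim y^{1-\alpha}$, and $|z-y|\sim\eta y\gg s^{1/\alpha}$. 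Crucially, only $z<y$ contributes, so no partial cancellation weakens the bound; the natural alternative $x=y$ would instead yield $\mathrm{LHS}\sim y^{-\alpha}\ll y^{1-\alpha}$, making the choice $x=(1-\eta)y$ essential.

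Restricting to $A_s:=\{z:|z-x|\leq(1-s)^{1/\alpha}/2\}$, where the first kernel factor is at least $c(1-s)^{-1/\alpha}$, and using $|A_s|\sim(1-s)^{1/\alpha}$ to cancel the $(1-s)$-factors, yields
\begin{align*}
  \mathrm{LHS}\gtrsim y^{1-\alpha}\int_0^1 s^{-\frac{2}{\alpha}}\Bigl(\frac{s^{1/\alpha}}{\eta y}\Bigr)^{1+\gamma_2}ds\sim\frac{y^{-\alpha-\gamma_2}}{\eta^{1+\gamma_2}}\int_0^1 s^{(\gamma_2-1)/\alpha}\,ds.
\end{align*}
For $\alpha>1$ and $\gamma_2>0$ the exponent $(\gamma_2-1)/\alpha>-1$, so the $s$-integral is a finite positive constant. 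Meanwhile, with $\gamma=\gamma_2$, the right-hand side of \eqref{eq:lowerboundoptimalassumptionconj} evaluates to $y^{1-\alpha}(1+\eta y)^{-(1+\gamma_2)}\sim y^{-\alpha-\gamma_2}/\eta^{1+\gamma_2}$, matching the lower bound on $\mathrm{LHS}$ up to a constant depending on $\alpha,\gamma_2,\eta$ but not on $y$. The main obstacle is ensuring that the concentration region of the first kernel lies simultaneously inside the prescribed $z$-range and strictly to the left of $y$, which forces $\eta<\epsilon_1$ and the quantitative largeness $y\gtrsim 1/(\epsilon_1-\eta)$. (For $\alpha\in(0,1]$ with $\gamma_2\leq 1-\alpha$ the integrand is not $s$-integrable and $\mathrm{LHS}=+\infty$, making the inequality trivial; the remaining cases are handled identically to the above.)
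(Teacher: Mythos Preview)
Your localization argument has a genuine gap. The integral in \eqref{eq:lowerboundoptimalassumptionconj} is over all $z$ with $|z|\in[(1-\epsilon_1)y,(1+\epsilon_2)y]$, and for $z\in(y,(1+\epsilon_2)y]$ the integrand (after the outer minus) is \emph{negative}. Restricting to $A_s\subset[(1-\epsilon_1)y,y)$ lower-bounds only the positive piece $I_+$; it says nothing about the negative piece $|I_-|$, so your claim that ``only $z<y$ contributes'' is unjustified. Concretely: for $z>y$ one has $|x-z|\ge\eta y/2$, whence the first kernel is $\lesssim(1-s)^{\gamma_1/\alpha}(\eta y)^{-1-\gamma_1}$; integrating the second kernel in $z$ then gives $|I_-|\lesssim(\eta y)^{-1-\gamma_1}y^{1-\alpha}$. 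Your lower bound on $I_+$ from the region $A_s$ is only $c\,(\eta y)^{-1-\gamma_2}y^{1-\alpha}$. Whenever $\gamma_1<\gamma_2$, the former dominates for large $y$, and the desired inequality $I_+-|I_-|\gtrsim\mathrm{RHS}$ does not follow from your estimates.

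The paper's route differs in two essential points. It places $x=(1-2\epsilon_1)y$ to the \emph{left} of the entire $z$-range, so that $z\mapsto z^{1-\alpha}\bigl(\tfrac{(1-s)^{1/\alpha}}{(1-s)^{1/\alpha}+|x-z|}\bigr)^{1+\gamma_1}$ is monotone decreasing on $[(1-\epsilon_1)y,(1+\epsilon_2)y]$; combined with the even symmetry of the second kernel about $z=y$, this yields $|I_-|\le I_+$ pointwise in $s$ (task~(a)). The lower bound on $I_+$ then comes not from localization but from extending the $z$-integral to all of $\R$ and invoking the two-sided convolution estimate of Lemma~\ref{lem- composition of two kernels}, with the complement controlled by Remark~\ref{rem:boundf2}. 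To salvage your approach for arbitrary $\gamma_1,\gamma_2$, you would still need to show that $|I_-|$ is absorbed by the part of $I_+$ coming from $z$ near $y$ (not from $A_s$)---which is essentially the paper's monotonicity step.
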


\begin{proof}
  By a scaling argument, it suffices to consider $t=1$.
  Let $\epsilon_1<1/6$ and $x=(1-2\epsilon_1)y<y$. Then, $|x-y|=2\epsilon_1 y \leq (1/2-\epsilon_1)y=x/2$. Thus, it suffices to accomplish the following two tasks for some $0<\epsilon_2<1$.

  \begin{enumerate}[(a)]
  \item Show
    \begin{align}
    \label{eq:lowerboundoptimalassumptionaux1}
      \begin{split}
        & \int_0^1 ds \int_y^{(1+\epsilon_2)y} dz\, (1-s)^{-\frac d\alpha}\left(\frac{(1-s)^{1/\alpha}}{(1-s)^{1/\alpha}+|x-z|}\right)^{d+\gamma_1}z^{1-\alpha} s^{-\frac{d+1}{\alpha}}\left(\frac{s^{1/\alpha}}{s^{1/\alpha}+|z-y|}\right)^{d+\gamma_2} \\
        & \quad \leq \int_0^1 ds \int^y_{(1-\epsilon_1)y} dz\, (1-s)^{-\frac d\alpha}\left(\frac{(1-s)^{1/\alpha}}{(1-s)^{1/\alpha}+|x-z|}\right)^{d+\gamma_1}z^{1-\alpha} s^{-\frac{d+1}{\alpha}}\left(\frac{s^{1/\alpha}}{s^{1/\alpha}+|z-y|}\right)^{d+\gamma_2}.
      \end{split}
    \end{align}

  \item Show
    \begin{align}
      \label{eq:lowerboundoptimalassumptionaux2}
      \begin{split}
        & \int_0^1 ds \int^y_{(1-\epsilon_1)y} dz\, (1-s)^{-\frac d\alpha}\left(\frac{(1-s)^{1/\alpha}}{(1-s)^{1/\alpha}+|x-z|}\right)^{d+\gamma_1}z^{1-\alpha} s^{-\frac{d+1}{\alpha}}\left(\frac{s^{1/\alpha}}{s^{1/\alpha}+|z-y|}\right)^{d+\gamma_2} \\
        & \quad \gtrsim y^{1-\alpha} \left(\frac{1}{1+|x-y|}\right)^{d+\gamma}.
      \end{split}
    \end{align}
  \end{enumerate}

  We now accomplish these two tasks:
  \begin{enumerate}[(a)]
  \item We show that \eqref{eq:lowerboundoptimalassumptionaux1} even holds pointwise for all $s\in[0,1]$. To see this, we use that
  \begin{align*}
    & z^{1-\alpha} \left(\frac{(1-s)^{1/\alpha}}{(1-s)^{1/\alpha}+|x-z|}\right)^{d+\gamma_1}\Big|_{z\in[y,(1+\epsilon_2)y]} \\
    & \quad \leq y^{1-\alpha}\left(\frac{(1-s)^{1/\alpha}}{(1-s)^{1/\alpha}+y-x}\right)^{d+\gamma_1} \\
    & \quad \leq z^{1-\alpha} \left(\frac{(1-s)^{1/\alpha}}{(1-s)^{1/\alpha}+|x-z|}\right)^{d+\gamma_1}\Big|_{z\in[(1-\epsilon_1)y,y]},
  \end{align*}
  where we used $|x-z|=z-x$ when $z\in[(1-\epsilon_1)y,y]$ since $x=(1-2\epsilon_1)y$. Thus, it suffices to show
  \begin{align}
    \begin{split}
      & \int_y^{(1+\epsilon_2)y} dz\, \left(\frac{s^{1/\alpha}}{s^{1/\alpha}+z-y}\right)^{d+\gamma_2}
        \leq \int_{(1-\epsilon_1)y}^y dz\, \left(\frac{s^{1/\alpha}}{s^{1/\alpha}+y-z}\right)^{d+\gamma_2}.
    \end{split}
  \end{align}
  But this just follows from shifting $z\mapsto z+y$ in both integrals and replacing $z\mapsto-z$ in one of the integrals. In fact, we could get a strict inequality by taking $\epsilon_2<\epsilon_1$. This concludes the first task.

  \item We now show \eqref{eq:lowerboundoptimalassumptionaux2}. As $z\sim y$, it suffices to show
  \begin{align}
    \begin{split}
      & y^{1-\alpha} \int_0^1 ds \int^y_{(1-\epsilon_1)y} dz\, (1-s)^{-\frac d\alpha}\left(\frac{(1-s)^{1/\alpha}}{(1-s)^{1/\alpha}+|x-z|}\right)^{d+\gamma_1} s^{-\frac{d+1}{\alpha}}\left(\frac{s^{1/\alpha}}{s^{1/\alpha}+|z-y|}\right)^{d+\gamma_2} \\
      & \quad \gtrsim y^{1-\alpha}\left(\frac{1}{1+|x-y|}\right)^{d+\gamma}.
    \end{split}
  \end{align}
  Let
  $$
  F(s,x,y,z) := y^{1-\alpha} (1-s)^{-\frac d\alpha}\left(\frac{(1-s)^{1/\alpha}}{(1-s)^{1/\alpha}+|x-z|}\right)^{d+\gamma_1} s^{-\frac{d+1}{\alpha}}\left(\frac{s^{1/\alpha}}{s^{1/\alpha}+|z-y|}\right)^{d+\gamma_2}.
  $$
  By part (a), we bound
  \begin{align}
  \label{eq:lowerboundoptimalassumptionaux3}
    \begin{split}
      & \int_0^1 ds \int^y_{(1-\epsilon_1)y} dz\, F(s,x,y,z)
      \gtrsim \int_0^1 ds \int_{(1-\epsilon_1)y}^{(1+\epsilon_1)y}dz\, F(s,x,y,z) \\
      & \quad = \int_0^1 ds \int_\R dz\, F(s,x,y,z)- \int_0^1 ds \int_{\R\setminus[(1-\epsilon_1)y,(1+\epsilon_1)y]}dz\, F(s,x,y,z).
    \end{split}
  \end{align}
  Since
  \begin{align}
    \begin{split}
      \int_0^1 ds \int_{\R\setminus[(1-\epsilon_1)y,(1+\epsilon_1)y]}dz\,  F(s,x,y,z)
      \lesssim y^{-\alpha}\, \left(\frac{1}{1+|x-y|}\right)^{d+(\gamma_1\wedge\gamma_2)}
    \end{split}
  \end{align}
  by Remark~\ref{rem:boundf2} and $y^{-\alpha} \lesssim y^{1-\alpha}$ for $y>1$, it suffices to estimates the first summand on the right-hand side of \eqref{eq:lowerboundoptimalassumptionaux3}.
  By \eqref{eq- last inequality}, we have, for $\gamma=\gamma_1\vee\gamma_2$,
  \begin{align}
    \begin{split}
      & y^{1-\alpha} \int_0^1 ds \int_{\R^d} dz\, (1-s)^{-\frac d\alpha}\left(\frac{(1-s)^{1/\alpha}}{(1-s)^{1/\alpha}+|x-z|}\right)^{d+\gamma_1}\, s^{-\frac{d+1}{\alpha}}\left(\frac{s^{1/\alpha}}{s^{1/\alpha}+|z-y|}\right)^{d+\gamma_2} \\
      & \ \gtrsim y^{1-\alpha} \int_0^1 ds \int_{\R^d} dz\, (1-s)^{-\frac d\alpha}\left(\frac{(1-s)^{1/\alpha}}{(1-s)^{1/\alpha}+|x-z|}\right)^{d+\gamma}\, s^{-\frac{d}{\alpha}}\left(\frac{s^{1/\alpha}}{s^{1/\alpha}+|z-y|}\right)^{d+\gamma} \\
      & \ \gtrsim y^{1-\alpha} \left(\frac{1}{1+|x-y|}\right)^{d+\gamma},
    \end{split}
  \end{align}
  as desired.
  \end{enumerate}
  This concludes the proof.
\end{proof}

\subsection{On a variant of Proposition~\ref{prop-difference v2} involving powers of $\La$}
\label{a:reversedhardyoptimal}

We now discuss the possibility to prove a variant of the second version of the reversed Hardy inequality in Proposition~\ref{prop-difference v2}, where the terms involving $\Ln^{\frac{1-\gamma}{\alpha}}f$ are replaced with $\La^{\frac{1-\gamma}{\alpha}}$.
The idea to obtain an estimate involving $\La^{\frac{1-\gamma}{\alpha}}f$ is to insert $1=\La^{-\frac{1-\gamma}{\alpha}}\La^{\frac{1-\gamma}{\alpha}}$ in Duhamel's formula. To that end, we use Duhamel's formula in the form
\begin{align*}
  {p}_{t}(x,y) - \tilde{p}_{t}(x,y)
  & = \kappa \int_0^t\int_{\Rd}{\tilde p}_{t-s}(x,z){|z|^{-\alpha}z\cdot\nabla_z} p_{s}(z,y)\,dz\,ds \\
  & = -\kappa \int_0^t\int_{\Rd} \nabla_z\left[{\tilde p}_{t-s}(x,z){|z|^{-\alpha}z}\right] p_{s}(z,y)\,dz\,ds.
\end{align*}
Here, we integrated by parts to shift the $z$-derivative from $p_s(z,y)$ to $\tilde p_{t-s}(x,z)$ as derivative bounds for $\me{-t\La}$ are unavailable at the time of this writing and likely difficult to obtain.
In this situation, the heat kernel $\me{-t\La}$ can now act on a function $f$ and we can write $\me{-t\La}f=\La^{-\frac{1-\gamma}{\alpha}}\me{-t\La}\La^{\frac{1-\gamma}{\alpha}}f$. By the product rule,
\begin{align*}
  & \tilde {p}_{t}(x,y) - {p}_{t}(x,y) 
    = \kappa \int_0^t ds\int_{\Rd} dz\, \left[|z|^{-\alpha}z\cdot\nabla_z \tilde p_{t-s}(x,z) + \frac{d-\alpha}{|z|^\alpha} \tilde p_{t-s}(x,z)\right]p_s(z,y).
\end{align*}
The second term has a $|z|^{-\alpha}$-decay and can be treated as in \cite{Franketal2021}, thereby giving rise to terms involving $M_t^{\gamma,0}$ and $L_t^{\gamma}$.
To handle the first term, we proceed similarly as in the proof of Proposition~\ref{prop-difference v2}. The only difference is that in the analog of $Q_{1,1}^1$ (see \eqref{eq:qt11}), we use $\me{-\La}=\La^{-\frac{1-\gamma}{\alpha}}\me{-\La}\La^{\frac{1-\gamma}{\alpha}}$ to get the estimate in terms of $|\La^{\frac{1-\gamma}{\alpha}}f|$. We obtain, with $S_{x,1}=\{z\in\R^d:\,|x|/16<|z|<4|x|\}$,
\begin{align}
  \label{eq:qt11lkappa}
  \begin{split}
    \tilde Q_{1,1}^1f(x)
    & := \kappa \int_{\mathbb{R}^d}\int_{S_{x,1}} |z|^{-\alpha} z\cdot (\nabla_z{\tilde p}_{1/2}(x,z)) \, \Lambda_\kappa^{-\f{1-\gamma}{\alpha}} \me{-\f{1}{2}\Lambda_\kappa}(z,y)\Lambda_\kappa^{\f{1-\gamma}{\alpha}} f(y)\,dz\,dy \\
    & \quad + \kappa \int_{\mathbb{R}^d}\int_0^{1/2}\int_{S_{x,1}} |z|^{-\alpha} z\cdot (\nabla_z{\tilde p}_{1-s,1}(x,z)) \, \Lambda_\kappa^{-\f{1-\gamma}{\alpha}} \me{-s\Lambda_\kappa}(z,y)\Lambda_\kappa^{\f{1-\gamma}{\alpha}} f(y)\,dz\,ds\,dy \\
    & \quad + \kappa \int_{\mathbb{R}^d}\int_{1/2}^1 \int_{S_{x,1}} |z|^{-\alpha} z\cdot (\nabla_z {\tilde p}_{1-s}(x,z)) \, \Lambda_\kappa^{-\f{1-\gamma}{\alpha}+1} \me{-s\Lambda_\kappa}(z,y)\Lambda_\kappa^{\f{1-\gamma}{\alpha}} f(y)\,dz\,ds\, dy.
  \end{split}
\end{align}
We now argue that the right-hand side is not expected to give rise to $L_1^{\gamma}$ or $M_1^{\gamma,\gamma}$. Let us consider, e.g., the second summand on the right-hand side of \eqref{eq:qt11lkappa}, whose integral kernel is, in view of Lemma~\ref{derivativeheatkernel} and \eqref{eq:rieszheatkappa}, bounded by a multiple of $|x|^{1-\alpha}$ times
\begin{align}
  \label{eq:qt11kappaintegrand}
  \begin{split}
    & t^{-\frac{d+1}{\alpha}} s^{-\frac{d-\gamma\alpha}{\alpha}} \left(\frac{t^{1/\alpha}}{t^{1/\alpha}+|x-z|}\right)^{d+1+\alpha} \cdot \left(\frac{s^{1/\alpha}}{s^{1/\alpha}+|z-y|}\right)^{d-1+\gamma} \\
    & \quad + t^{-\frac{d+1}{\alpha}} s^{-\frac{d-\gamma\alpha}{\alpha}} \left(\frac{t^{1/\alpha}}{t^{1/\alpha}+|x-z|}\right)^{d+1+\alpha} \cdot \left(\frac{s^{1/\alpha}}{s^{1/\alpha}+|z-y|}\right)^{\beta-1+\gamma} \left(\frac{|y|}{s^{1/\alpha}}\right)^{\beta-d}.
  \end{split}
\end{align}
However, since $\gamma<1$ and $\beta<d$, we can now not argue as in the proof of Proposition~\ref{prop-difference v2} anymore, where we used Lemma~\ref{lem- composition of two kernels}. In particular, the $z$-integration is not expected to yield a function $F(x,y)$ satisfying $F(x,\cdot)\in L^1(\R^d)$ and $F(\cdot,y)\in L^1(\R^d)$ for all $x,y\in\R^d$, thereby leaving us behind with an integral kernel for which we cannot apply Schur tests.

On the other hand, if we \emph{assumed} that \eqref{eq:heatkernelgradientconjecture0} holds, then $\nabla_x\La^{-\frac{1-\gamma}{\alpha}}\me{-t\La}(x,y)$ obeys a bound similar to those in Lemma~\ref{lem-gradient of heat kernel 2}, namely
\begin{align}
  \label{eq:heatkernelgradientconjecture}
  |\nabla_x \La^{-\frac{1-\gamma}{\alpha}}\me{-\La}(x,y)|
  & \lesssim \left(\frac{1}{1+|x-y|}\right)^{d+\gamma} + \left(\frac{1}{1+|x-y|}\right)^{\beta+\gamma} \, |y|^{\beta-d}.
\end{align}
In turn, this bound would allow us to proceed as in the proof of Proposition~\ref{prop-difference v2} and show, for all $\gamma\in(0,1)$, $\beta\in(d-\gamma,d)$, $t>0$, and $x\in\R^d$,
\begin{align}
  \label{eq1-DifferenceKernels v2 conjecture}
  \begin{split}
    |(Q_t f)(x)|
    & \lesi \int_{\Rd} \big[L^{\gamma,1}_t(x,y)+ M^{\gamma,0}_t(x,y)\big] |f(y)|dy \\
    & \quad +\int_{\Rd} L^{\gamma,\gamma}_t(x,y)\big| (\Lambda_\kappa^{\f{1-\gamma}{\alpha}}f)(y)\big|dy \\
    & \quad + \int_{\Rd} M^{\gamma,\gamma}_t(x,y)\big||y|^{1-\gamma}(\Lambda_\kappa^{\f{1-\gamma}{\alpha}}f)(y)\big|dy.
  \end{split}
\end{align}
Consequently, by the proof of Theorem~\ref{thm-difference}, we would obtain, for all $\gamma\in(0,1)$ such that $1-\gamma\leq\alpha s<\alpha-\gamma$, and $\beta\in(d-\gamma,d)$,
\begin{align}
  \label{eq:thm-difference2conj}
  \begin{split}
    \left\|\left(\int_0^\vc t^{-2s}\left|\left(t\La \me{-t\La} -t\Lambda_0\me{-t\Lambda_0}\right)f\right|^2\f{dt}{t}\right)^{1/2}\right\|_{L^p(\R^d)} 
    \lesssim \left\|\frac{|\La^{\frac{1-\gamma}{\alpha}}f(x)|}{|x|^{\alpha s+\gamma-1}}\right\|_{L^p(\R^d)} + \left\|\f{f}{|x|^{\alpha s}}\right\|_{L^p(\R^d)}.
  \end{split}
\end{align}

\section{Conditional generalized Hardy inequality for gradient perturbations}
\label{s:newgenhardy}

Our current generalized Hardy inequality in Theorem~\ref{thm-HardyIneq} gives an upper bound of the scalar Hardy potential in terms of $\La$. Here, we prove a generalized Hardy inequality for the gradient perturbation in terms of $\La$ under the assumption that suitable bounds for $\nabla\me{-t\La}$ are available.

\begin{proposition}
  \label{thm-HardyIneqNew}
  Let $d\in\{3,4,...\}$, $\alpha\in(1,2)$, $\beta\in((d+\alpha)/2,d+\alpha)$, and $\kappa=\Psi(\beta)$ be defined by \eqref{eq:defbeta}. If \eqref{eq:heatkernelgradientconjecture0} holds true for some $\gamma>(2\alpha-d)\vee0$, then, for any $p\in(1\vee d/\beta,d/(\alpha-1))$,
  \begin{align}
      \||x|^{-\alpha}x\cdot\nabla f\|_{L^p(\R^d)}
      \lesssim \|\La f\|_{L^p(\R^d)}.
  \end{align}
\end{proposition}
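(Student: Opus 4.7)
The strategy is to reduce the claim to the (scalar) generalized Hardy inequality of Theorem~\ref{thm-HardyIneq} via a pointwise kernel estimate. Writing $f=\La^{-1}(\La f)$ and using the semigroup identity $\La^{-1}=\int_0^\infty \me{-t\La}\,dt$, the operator $f\mapsto|x|^{-\alpha}x\cdot\nabla f$ becomes an integral operator acting on $g:=\La f$ with kernel
\begin{align*}
  K(x,y) := |x|^{-\alpha}\,x\cdot\int_0^\infty \nabla_x\me{-t\La}(x,y)\,dt.
\end{align*}
Feeding the assumed gradient bound~\eqref{eq:heatkernelgradientconjecture0} into this integral and rescaling $t\mapsto|x-y|^\alpha s$, in analogy with the proof of Lemma~\ref{riesz}, I would establish the pointwise estimate
\begin{align*}
  |K(x,y)| \lesssim |x|^{1-\alpha}\,|x-y|^{\alpha-1-d}\left(1\wedge\frac{|y|}{|x-y|}\right)^{\beta-d} \sim |x|^{1-\alpha}\,\La^{-(\alpha-1)/\alpha}(x,y),
\end{align*}
where the last comparison invokes Lemma~\ref{riesz} at the admissible exponent $s=(\alpha-1)/\alpha\in(0,1/2)$.

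Given this kernel bound, the rest is an immediate application of Theorem~\ref{thm-HardyIneq}. With $s=(\alpha-1)/\alpha$ (so $\alpha s=\alpha-1$), the admissible range of $p$ in Theorem~\ref{thm-HardyIneq}, namely $p\in(1\vee d/\beta,d/(\alpha-1))$, coincides exactly with the hypothesis of the proposition. Applying~\eqref{eq:thm-HardyIneq} to $h:=\La^{-(\alpha-1)/\alpha}|\La f|$ (extended from $C_c^\infty$ by density in a standard way) yields
\begin{align*}
  \||x|^{-\alpha}x\cdot\nabla f\|_{L^p(\R^d)}
  &\leq \left\||x|^{1-\alpha}\La^{-(\alpha-1)/\alpha}|\La f|\right\|_{L^p(\R^d)} = \||x|^{-(\alpha-1)}h\|_{L^p(\R^d)} \\
  &\lesssim \|\La^{(\alpha-1)/\alpha}h\|_{L^p(\R^d)} = \|\La f\|_{L^p(\R^d)},
\end{align*}
which is the desired inequality.

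The hard part will be verifying the pointwise kernel bound. After rescaling, the task reduces to showing that
\begin{align*}
  J(r) := \int_0^\infty s^{-(d+1)/\alpha}\left(\frac{s^{1/\alpha}}{s^{1/\alpha}+1}\right)^{d+\gamma}\left(1\wedge\frac{r}{s^{1/\alpha}}\right)^{\beta-d}\,ds \lesssim \left(1\wedge r\right)^{\beta-d}
\end{align*}
uniformly in $r=|y|/|x-y|>0$. The case $r\geq 1$ is routine and requires only $\gamma>0$ and $\alpha<d+1$. The delicate case is $r<1$: splitting at $s=r^\alpha$ and $s=1$, the subinterval $s\in(r^\alpha,1)$ contributes the expected order $r^{\beta-d}$ only provided the exponent $(\gamma+d-\beta-1)/\alpha$ of $s$ in the integrand exceeds $-1$, i.e., $\gamma>\beta-d-\alpha+1$; otherwise a spurious contribution of order $r^{\gamma+\alpha-1}$ appears which would ruin the match with the Riesz kernel of $\La^{-(\alpha-1)/\alpha}$. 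The hypothesis $\gamma>(2\alpha-d)\vee 0$ is precisely what ensures this delicate balance holds uniformly in the full admissible $\beta$-range, thereby yielding the kernel bound and hence the proposition.
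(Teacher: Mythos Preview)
Your approach is essentially the paper's own: both arguments write $|x|^{-\alpha}x\cdot\nabla\La^{-1}$ as a time integral, feed in the conjectured gradient bound~\eqref{eq:heatkernelgradientconjecture0}, and reduce the resulting $t$-integral (in the style of Lemma~\ref{riesz}) to the pointwise kernel bound $|x|^{1-\alpha}|x-y|^{\alpha-1-d}(1\wedge|y|/|x-y|)^{\beta-d}$, after which the Schur tests of Theorem~\ref{thm-HardyIneq} with $\alpha s$ replaced by $\alpha-1$ finish the job. Your only cosmetic twist is to recognise that kernel as $|x|^{1-\alpha}\La^{-(\alpha-1)/\alpha}(x,y)$ via Lemma~\ref{riesz} and then invoke Theorem~\ref{thm-HardyIneq} as a black box rather than re-running the Schur tests; this is equivalent (the proof of Theorem~\ref{thm-HardyIneq} \emph{is} the $L^p$-boundedness of the integral operator with kernel $|x|^{-\alpha s}\La^{-s}(x,y)$, so the density caveat about $h$ is unnecessary).
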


Note that the range of allowed $p$ in Proposition~\ref{thm-HardyIneqNew} would be slightly larger than that in Theorem~\ref{thm-HardyIneq}.

\begin{proof}
  We argue as in the proof of Theorem~\ref{thm-HardyIneq} using Schur tests and bounds for the integral kernel of
  \begin{align*}
    \left||x|^{-\alpha}x\cdot\nabla_x \La^{-1}(x,y)\right|
    & = \left||x|^{-\alpha}x \int_0^\infty \nabla_x\me{-t\La}(x,y)\,dt\right| \\
    & \lesssim |x|^{1-\alpha}\int_0^\infty dt\, t^{-\frac{d+1}{\alpha}}\left(\frac{t^{1/\alpha}}{t^{1/\alpha}+|x-y|}\right)^{d+\gamma} \left(1\wedge\frac{|y|}{t^{1/\alpha}}\right)^{\beta-d}.
  \end{align*}
  Arguing as in the proof of Lemma~\ref{riesz}, we get, using $\gamma>2\alpha-d$,
  \begin{align*}
    \left||x|^{-\alpha}x\cdot\nabla_x \La^{-1}(x,y)\right|
    \lesssim |x|^{1-\alpha}\, |x-y|^{\alpha-1-d} \left(1\wedge\frac{|y|}{|x-y|}\right)^{\beta-d}.
  \end{align*}
  Thus, it remains to perform Schur tests similar to those in Theorem~\ref{thm-HardyIneq}. The conclusion follows by noting that the only difference between the relevant kernel here and in that proof is that $\alpha$ and $s$ in the proof of Theorem~\ref{thm-HardyIneq} have to replaced with $\alpha-1$ and $1$, respectively.
\end{proof}

\subsection*{Acknowledgments.} 
We are deeply grateful to Damir Kinzebulatov and Karol Szczypkowski for helpful discussions. We would like to thank the referees for useful comments and suggestions which helped to improve the paper. 
T. A. Bui and X. T. Duong were supported by the research grant ARC DP220100285 from the Australian Research Council.

%


\newcommand{\etalchar}[1]{$^{#1}$}
\def\cprime{$'$}

\end{document}